\title{Hierarchical formula classes with respect to semi-classical prenex normalization}
\author{Makoto Fujiwara\footnote{Email: makotofujiwara@rs.tus.ac.jp}
\footnote{Department of Applied Mathematics, Faculty of Science Division I, Tokyo University of Science, 1-3 Kagurazaka, Shinjuku-ku, Tokyo 162-8601, Japan.}
and Taishi Kurahashi\footnote{Email: kurahashi@people.kobe-u.ac.jp}
\footnote{Graduate School of System Informatics,
Kobe University,
1-1 Rokkodai, Nada, Kobe 657-8501, Japan.}}
\date{}
\theoremstyle{plain}
\newtheorem{thm}{Theorem}[section]
\newtheorem*{thm*}{Theorem}
\newtheorem{lem}[thm]{Lemma}
\newtheorem{prop}[thm]{Proposition}
\newtheorem{cor}[thm]{Corollary}
\newtheorem*{notation}{Notation}
\theoremstyle{definition}
\newtheorem{defn}[thm]{Definition}
\newtheorem{remark}[thm]{Remark}
\newcommand{\FV}[1]{\mathrm{ FV} \left({#1}\right)}
\newcommand{\DT}{\rightsquigarrow}
\newcommand{\DTA}{{\rightsquigarrow^*}}
\newcommand{\qz}{{\,\, \rightsquigarrow^{*}_{0}\,\,}}
\newcommand{\QF}[1]{{#1}_{\mathrm{qf}}}
\newcommand{\PA}{\mathsf{PA}}
\newcommand{\HA}{\mathsf{HA}}
\newcommand{\DNE}[1]{{#1}\text{-}\mathrm{DNE}}
\newcommand{\LEM}[1]{{#1}\text{-}\mathrm{LEM}}
\newcommand{\F}{\mathrm{F}}
\newcommand{\PF}{\mathrm{P}}
\newcommand{\U}{\mathrm{U}}
\newcommand{\E}{\mathrm{E}}
\newcommand{\C}{\mathrm{C}}
\newcommand{\R}{\mathcal{U}}
\newcommand{\J}{\mathcal{E}}
\newcommand{\D}{\mathcal{D}}
\newcommand{\N}{\mathbb{N}}
\newcommand{\vp}{\varphi}
\newcommand{\qn}{{\, \rightsquigarrow^{*}_{n}\,}}
\newcommand{\ol}[1]{\overline{#1}}
\newcommand{\br}[1]{\llbracket{#1}\rrbracket}
\newcommand{\PNFT}[2]{\mathrm{PNFT}_{T} \left( {#1}, {#2} \right) }
\begin{document}

\maketitle

\begin{abstract}
In \cite{FK24}, the authors formalized the standard transformation procedure for prenex normalization of first-order formulas and showed that the classes $\E_k$ and $\U_k$ introduced in Akama et al. \cite{ABHK04} are exactly the classes induced by $\Sigma_k$ and $\Pi_k$ respectively via the transformation procedure.
In that sense, the classes $\E_k$ and $\U_k$ correspond to  $\Sigma_k$ and $\Pi_k$ based on classical logic respectively.
On the other hand, some transformations of the prenex normalization are not possible in constructive theories.
In this paper, we introduce new classes $\J_k^n$ and $\R_k^n$ of first-order formulas with two parameters $k$ and $n$, and show that they are exactly the classes induced by $\Sigma_k$ and $\Pi_k$ respectively according to the $n$-th level semi-classical prenex normalization, which is obtained by the prenex normalization in \cite{FK24} with some restriction to the introduced classes of degree $n$.
In particular, the latter corresponds to possible transformations in intuitionistic arithmetic augmented with the law-of-excluded-middle schema restricted to formulas of $\Sigma_n$-form.
In fact, if $ n\geq k$, our classes  $\J_k^n$ and $\R_k^n$ are identical with the cumulative variants $\E^+_k$ and $\U^+_k$ of $\E_k$ and $\U_k$ respectively.
In this sense, our classes are refinements of $\E^+_k$ and $\U^+_k$ with respect to the prenex normalization from the semi-classical perspective.
\end{abstract}

\noindent
Keywords: prenex normal form theorem, prenex normalization, intuitionistic logic, formula classes

\noindent
MSC Classification: 03B20, 03F03, 03F50

\section{Introduction}
The prenex normal form theorem states that for any first-order theory based on classical logic, every formula is equivalent (over the theory in question) to some formula in prenex normal form (cf.~\cite[pp.~160--161]{End01}).
On the other hand, it does not hold for constructive theories in general.
Therefore the arithmetical hierarchical classes $\Sigma_k$ and $\Pi_k$, which are based on prenex formulas, do not make sense for constructive theories.
Based on this fact, several kinds of hierarchical classes corresponding to $\Sigma_k$ and $\Pi_k$ from the constructive viewpoint have been introduced and studied from different perspectives respectively (cf. \cite{Mints68, Leiv81, Burr00, Fle10, SUZ17, BNI19}).

In \cite{FK24}, the authors formalized the standard transformation procedure for prenex normalization of first-order formulas and called it {\it prenex normalization}  (see Table \ref{table: PN}).
The prenex normalization is a reduction procedure without any reference to the notion of derivability, and it is based on the standard proof of prenex normal form theorem for classical theories (see \cite[Section 1]{FK24}).
Then they showed that the classes $\E_k$ and $\U_k$ introduced in Akama et al. \cite{ABHK04} are exactly the classes induced by $\Sigma_k$ and $\Pi_k$ respectively via the prenex normalization (see \cite[Theorem 16]{FK24}).
Therefore, the classes $\E_k$ and $\U_k$ correspond to $\Sigma_k$ and $\Pi_k$ based on classical logic respectively.
Since some transformations of the prenex normalization are not possible in constructive theories, however, the classes do not make sense from the constructive standpoint.
On the other hand, if one restricts the classes of formulas for the prenex normalization, they are possible in some semi-classical theories which are obtained from constructive theories by adding some restricted fragment of classical logic.
For example, the transformation rule $(\to \exists)$ in Table \ref{table: PN} is not derivable in intuitionistic arithmetic $\HA$ in general, but it is derivable for $\delta$ of $\Sigma_n$-form in semi-classical arithmetic $\HA+\LEM{\Sigma_n}$, where $\LEM{\Sigma_n}$ denotes the law-of-excluded-middle schema restricted to formulas of $\Sigma_n$-form.
Based on this sort of idea, in this paper, we introduce new classes $\J_k^n$ and $\R_k^n$ of first-order formulas with two parameters $k$ and $n$, and show that they are exactly the classes induced by (the cumulative variants of) $\Sigma_k$ and $\Pi_k$ respectively according to the $n$-th level semi-classical prenex normalization (see Theorem \ref{thm: characterizations of semi-classical PN}).
The latter is the prenex normalization with some restriction to the introduced classes of degree $n$, and this corresponds to possible transformations in $\HA+\LEM{\Sigma_n}$.
In fact, if $ n\geq k$, the classes $\J_k^n$ and $\R_k^n$ are identical with the cumulative variants $\E^+_k$ and $\U^+_k$ of $\E_k$ and $\U_k$ respectively (see Proposition \ref{prop: k>=n => Ek+=Enk and Uk+=Unk}).
In this sense, our classes $\J_k^n$ and $\R_k^n$ are refinements of the classes $\E^+_k$ and $\U^+_k$ with respect to the prenex normalization from the semi-classical perspective.
They are new kind of classes corresponding to $\Sigma^+_k$ and $\Pi^+_k$ from relativized viewpoints in-between constructive and classical ones with respect to the prenex normalization.

All of our proofs in this paper are purely syntactic, and the proofs contain many case distinctions.

\subsection{Framework}
We work with a standard formulation of first-order theories with all the logical constants $\forall, \exists, \to, \land, \lor$ and $ \perp$ in the language. 
Note that $\neg \varphi$ and $\varphi \leftrightarrow \psi $ are the abbreviations of $(\varphi \to \perp )$ and $(\varphi \to \psi) \land (\psi \to \varphi)$ respectively in our context.
Throughout this paper, let $k$, $n$, $i$ and $m$ denote natural numbers (possibly $0$).

\begin{notation}
For a formula $\vp$, $\FV{\vp}$ denotes the set of all free variables in $\vp$.
Quantifier-free formulas are denoted with subscript ``qf'' as $\QF{\vp}$.
In addition, a non-empty list of variables is denoted with an over-line as $\ol{x}$.
In particular, a list of quantifiers of the same kind is denoted as $\exists \ol{x}$ and  $\forall \ol{x}$ respectively.
For formulas $\vp, \xi$ and $\xi'$, $\vp\br{\xi' / \xi}$ denotes the formula $\vp$ in which ``an occurrence'' of $\xi$ is replaced by $\xi'$ (note that $\vp\br{\xi' / \xi}$ may be different from the substitution of $\xi$ by $\xi'$ in $\vp$).
\end{notation}

The classes $\Sigma_k$ and $\Pi_k$ are defined as follows (cf.~\cite[pp.~142--143]{CK13}):
\begin{itemize}
	\item Let $\Sigma_0 $ and $ \Pi_0$ be the class of all quantifier-free formulas; 
	\item $\Sigma_{k+1} : = \{\exists \ol{x} \, \varphi \mid \varphi \in \Pi_k\}$;
	\item $\Pi_{k+1} : = \{\forall \ol{x} \, \varphi \mid \varphi \in \Sigma_k\}$.
\end{itemize}
Their cumulative variants $\Sigma_k^+$ and $\Pi_k^+$ are defined as follows:
\begin{itemize}
	\item $ \begin{displaystyle}
 \Sigma_{k}^+ : =\Sigma_{k} \cup \bigcup_{i<k} \Sigma_i \cup  \bigcup_{i<k} \Pi_i ;
 \end{displaystyle}$
\item $ \begin{displaystyle}
  \Pi_{k}^+ : =\Pi_{k} \cup \bigcup_{i<k} \Sigma_i \cup  \bigcup_{i<k} \Pi_i
   \end{displaystyle}$.
\end{itemize}
A formula $\varphi$ is in \textbf{prenex normal form} if $\varphi$ is in $\Sigma_k \cup \Pi_k$ for some $k$.
The classes $\E_k$, $\U_k$, $\PF_k$ and $\F_k$ were introduced in \cite{ABHK04}, and the cumulative variants $\E_k^+$, $\U_k^+$, and $\F_k^+$ were introduced and studied in \cite{FK21, FK23, FK24}.
See \cite[Section 2]{FK24} for their precise definitions.

In the context of arithmetic, the hierarchy of logical axioms restricted to the classes $\Sigma_k$ and $\Pi_k$ has been studied extensively (cf. \cite{ABHK04, FK22}).
The logical axioms include the law-of-excluded-middle schema
\[
{\rm LEM}:\, \vp \lor \neg \vp
\]
and the double-negation-elimination schema
\[
{\rm DNE}:\, \neg \neg \vp \to \vp.
\]
For a class $\Gamma$ of formulas, $\LEM{\Gamma}$ and $\DNE{\Gamma}$ denote ${\rm LEM}$ and ${\rm DNE}$ restricted to formulas in $\Gamma$ respectively.

\subsection{A previous work}
In \cite{FK24}, the transformations in Table \ref{table: PN} are called \textit{prenex transformations}.
\begin{table}[h]
\begin{center}
\begin{tabular}{rccc}
$(\exists \to )$: & $\exists x \xi(x) \to \delta$ &$\rightsquigarrow$ & $\forall x(\xi(x) \to \delta)$;\\

$(\forall \to )$: & $\forall x \xi(x) \to \delta$ &$\rightsquigarrow$ & $\exists x(\xi(x) \to \delta)$;\\

$(\to \exists )$: & $\delta \to \exists  x\, \xi(x)$ & $\rightsquigarrow$ & $\exists  x\, (\delta \to \xi(x))$;\\
$(\to \forall )$: & $\delta \to \forall  x\, \xi(x)$ & $\rightsquigarrow$ & $\forall  x\, (\delta \to \xi(x))$;\\
$(\exists \land )$: & $\exists x \, \xi(x) \land \delta$ &$\rightsquigarrow$ & $\exists x\, (\xi(x) \land \delta)$;\\
$(\forall \land  )$: & $\forall x \, \xi(x) \land \delta$ &$\rightsquigarrow$ & $\forall x\, (\xi(x) \land \delta)$;\\
$(\land \exists)$: & $\delta \land \exists x \, \xi(x)$ & $\rightsquigarrow$ & $\exists x\, (\delta \lor \xi(x))$;\\
$(\lor \forall)$: & $\delta \land \forall x \, \xi(x)$ & $\rightsquigarrow$ & $\forall x\, (\delta \land \xi(x))$;\\
$(\exists  \lor)$: & $\exists x \, \xi(x) \lor \delta$ &$\rightsquigarrow$ & $\exists x\, (\xi(x) \lor \delta)$;\\
$(\forall \lor)$: & $\forall x \, \xi(x) \lor \delta$ &$\rightsquigarrow$ & $\forall x\, (\xi(x) \lor \delta)$;\\
$(\lor \exists)$: & $\delta \lor \exists x \, \xi(x)$ & $\rightsquigarrow$ & $\exists\, (\delta \lor \xi(x))$;\\
$(\lor \forall)$: & $\delta \lor \forall x \, \xi(x)$ & $\rightsquigarrow$ & $\forall x\, (\delta \lor \xi(x))$;\\
$(\exists\text{-var})$:& $ \exists x \xi(x)$ & $\rightsquigarrow$ & $\exists y \xi(y)$;\\
$(\forall \text{-var})$: & $ \forall x \xi(x)$ & $\rightsquigarrow$ & $\forall y \xi(y)$;\\[5pt]
\end{tabular}

\noindent
where $x \notin \FV{\delta}$ and $y$ does not appear in $\xi$.
\end{center}
\caption{Prenex normalization}
\label{table: PN}
\end{table}
Then the relation $\varphi \DTA \psi$ between formulas is defined as $\psi$ is obtained from $\varphi$ by repeating prenex transformations $\DT$ finitely many times to a subformula recursively.
In \cite{FK24}, the authors showed the following:
\begin{itemize}
\item
A formula is in $\E_k^+$ (resp.~$\U_k^+$) if and only if it can be transformed into a formula in $\Sigma_k^+$ (resp.~$\Pi_k^+$) with respect to $\DTA$.
\item
A formula is in $\F_k^+$ if and only if it can be transformed into a formula in $\Sigma_{k+1}^+$ and also into a formula in $\Pi_{k+1}^+$ with respect to $\DTA$.
\item
A formula is in $\E_{k+1}$ (resp.~$\U_{k+1}$) if and only if it can be transformed into a formula in $\Sigma_{k+1}$ (resp.~$\Pi_{k+1}$) but cannot be so for $\Pi_{k+1}^+$ (resp.~$\Sigma_{k+1}^+$) with respect to $\DTA$.
\item
A formula is in $\PF_{k}$ if and only if it can be transformed into a formula in $\Sigma_{k+1}$ and also into  a formula in $\Pi_{k+1}$ but cannot be so for $\Sigma_{k}^+ \cup \Pi_{k}^+$ with respect to $\DTA$.
\end{itemize}
By this observation, the classification of formulas into $\E_k$, $\U_k$ and $\PF_k$ can be visualized as Figure \ref{fig:my_label} (cf. \cite[Section 5]{FK24}).

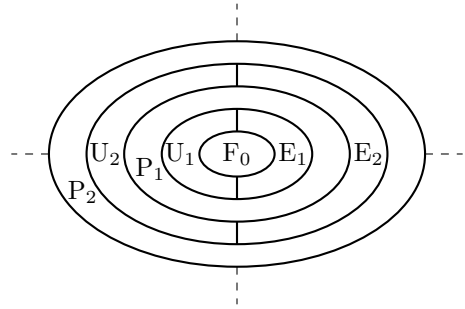
\begin{figure}[ht]
\centering
\begin{tikzpicture}
\node at (0,0) {$\F_0$};
\node at (-0.75,0) {$\U_1$};
\node at (0.75,0) {$\E_1$};
\node at (-1.15, -0.2) {$\PF_1$};
\node at (-1.75,0) {$\U_2$};
\node at (1.75,0) {$\E_2$};
\node at (-2.05, -0.5) {$\PF_2$};

\draw [thick] (0,0)circle [x radius=0.5,y radius=0.3];
\draw [thick] (0,0)circle [x radius=1,y radius=0.6];
\draw [thick] (0,0)circle [x radius=1.5,y radius=0.9];
\draw [thick] (0,0)circle [x radius=2,y radius=1.2];
\draw [thick] (0,0)circle [x radius=2.5,y radius=1.5];

\draw [thick] (0,0.3)--(0,0.6);
\draw [thick] (0,-0.3)--(0,-0.6);
\draw [thick] (0,0.9)--(0,1.2);
\draw [thick] (0,-0.9)--(0,-1.2);
\draw [dashed] (0,1.5)--(0,2);
\draw [dashed] (0,-1.5)--(0,-2);
\draw [dashed] (2.5,0)--(3.0,0);
\draw [dashed] (-2.5,0)--(-3.0,0);
\end{tikzpicture}
\caption{Hierarchical classification of formulas with respect to $\DTA$}
    \label{fig:my_label}
\end{figure}

\section{Semi-classical prenex normalization}
\label{sec: scPN}
Prenex normal form theorem does not hold for intuitionistic theories in general.
In particular, the rules $(\forall \to )$, $(\to \exists )$ and the converses of $(\forall \lor)$ and $(\lor \forall)$ in Table \ref{table: PN} are not derivable in an intuitionistic theory.
In intuitionistic arithmetic $\HA$, however, the rules $(\to \exists )$ and the converses of $(\forall \lor)$ and $(\lor \forall)$  are derivable if formulas $\delta$ are restricted to quantifier-free formulas.
In a general context, they are derivable in an intuitionistic (or semi-classical) theory in which $\delta$ is decidable.

In the following, we construct hierarchical classes $\J^n_k$ and $\R_k^n$ which are intended to be classes corresponding to $\Sigma_k^+$ and $\Pi_k^+$ with assuming the decidability of formulas in $\E_n^+ \cup \U_n^+$.
For example, an intended class $\J^0_{k+1}$ of formulas which are intuitionistically equivalent to some $\Sigma_{k+1}^+$-formulas only with assuming the decidability of quantifier-free formulas is desired to satisfy the following clauses:
\begin{itemize}
\item
Formulas which are already ensured to be equivalent to some formulas in $\Sigma_k^+ \cup \Pi_k^+$ are in $\J^0_{k+1}$.
\item
If $E$ and $E'$ are in $\J^0_{k+1}$, then $\exists x E$ and $E\land E'$ are in $\J^0_{k+1}$;
\item 
If $E$ is in $\J^0_{k+1}$ and $E_1$ is in $\J^0_1$, then $\E \lor \E_1$ and $\E_1 \lor \E$ are in $\J^0_{k+1}$;
\item
If $E$ is in $\J^0_{k+1}$ and $D_0$ is quantifier-free, then $D_0\to E$ is in $\J^0_{k+1}$.
\end{itemize}
On the other hand, an intended class $\R_{k+1}^0$ of formulas which are intuitionistically equivalent to some $\Pi_{k+1}^+$-formulas only with assuming the decidability of quantifier-free formulas is desired to satisfy the following clauses:
\begin{itemize}
\item
Formulas which are already ensured to be equivalent to some formulas in $\Sigma_k^+ \cup \Pi_k^+$ are in $\R^0_{k+1}$.
\item
If $U$ and $U'$ are in $\R^0_{k+1}$, then $\forall x U$ and $U\land U'$ are in $\R^0_{k+1}$;
\item
If $U$ is in $\R^0_{k+1}$ and $D_0$ is quantifier-free, then $U \lor D_0$ and $D_0 \lor U$ are in $\R^0_{k+1}$;
\item
If $U$ is in $\R^0_{k+1}$ and $E_1$ is in $\Sigma_1^+$, then $E_1\to U$ is in $\R^0_{k+1}$.
\end{itemize}
In order to relativize them to the semi-classical cases with assuming the decidability of formulas in $\E_n^+\cup \U_n^+$, we consider three cases, namely, the case of  $n>k$, the case of $n=k$, and the case of $n<k$.
In fact, the constructions in the first case are given just by imitating the classical case since the decidability of formulas in $\E_n^+ \cup \U_n^+$ should be sufficient for obtaining equivalent prenex formulas in $\Sigma_k^+ \cup \Pi_k^+$.
The constructions in the second case are given by generalizing the constructions of $\J^0_1$ and $\R^0_1$ for the decidability of quantifier-free formulas to those for formulas in $\Sigma_n^+ \cup \Pi_n^+$.
The constructions in the third case are given by relativizing the constructions of $\J^0_{k+1}$ and $\R^0_{k+1}$ in the above to the situation with assuming the decidability of formulas which can be transformed into formulas in  $\Sigma_n^+ \cup \Pi_n^+$.
Then we define our classes $\J^n_k$ and $\R^n_k$ as follows:

\newpage
\begin{defn}
\label{def: iEiU}
Define $\D^{n}_0 := \J^{n}_0 :=\R^{n}_0:=\F_{0} $ (set of all quantifier-free formulas).
Let $\J^{n}_{k}$ and $\R^{n}_{k}$ have  been already defined.
Put  $\D^{n}_{k}:= \J^{n}_{k} \cup \R^{n}_{k}$.
\begin{enumerate}
\item 
Case of $n>k$:
Classes $\J^{n}_{k+1}$ and $\R^{n}_{k+1}$ are inductively generated by
\begin{enumerate}
        \item
    $D, E\land E', E \lor E', U \to E, \exists x E\in \J^{n}_{k+1};$
        \item
    $D, U\land U', U \lor U',  E \to U,  \forall x U \in \R^{n}_{k+1};$
\end{enumerate}
where $D$ ranges over formulas in $\D^{n}_{k}$, $E$ and $E'$ over those in $\J^{n}_{k+1}$, and $U$ and $U'$ over those in $\R^{n}_{k+1}$ respectively.
\item 
Case of $n=k$:
Classes $\J^{n}_{k+1}$ and $\R^{n}_{k+1}$ are inductively generated by
\begin{enumerate}
        \item
    $D, E\land E', E \lor E', D \to E, \exists x E\in \J^{n}_{k+1};$
        \item
    $D, U\land U', U \lor D, D \lor U,  E \to U,  \forall x U \in \R^{n}_{k+1};$
\end{enumerate}
where $D$ ranges over formulas in $\D^{n}_{k}$, $E$ and $E'$ over those in $\J^{n}_{k+1}$, and $U$ and $U'$ over those in $\R^{n}_{k+1}$ respectively.

\item 
Case of $n<k$:
Classes $\J^{n}_{k+1}$ and $\R^{n}_{k+1}$ are inductively generated by
\begin{enumerate}
        \item
    $D, E\land E', E \lor E_1, E_1 \lor E, D_{0} \to E, \exists x E\in \J^{n}_{k+1};$
        \item
    $D, U\land U', U \lor D_{0}, D_{0}\lor U,  E_{1} \to U,  \forall x U \in \R^{n}_{k+1};$
\end{enumerate}
where $D$ ranges over formulas in $\D^{n}_{k}$, $E$ and $E'$ over those in $\J^{n}_{k+1}$, $U$ and $U'$ over those in $\R^{n}_{k+1}$, $D_{0}$ over those in $\D^{n}_{n}$, and $E_{1}$ over those in $\J^{n}_{n+1}$ respectively.
\end{enumerate}
\end{defn}

Recall that our hierarchical classes $\J^n_{k+1}$ and $\R^n_{k+1}$ are intended to be the classes of formulas which are intuitionistically equivalent to some $\Sigma_{k+1}^+$-formulas and $\Pi_{k+1}^+$-formulas respectively with assuming the decidability of formulas in  $\E_n^+ \cup \U_n^+$.

\begin{remark}
\label{rem: Ekn and Ukn are subclasses of Ek+1n and Uk+1n}
It is straightforward to see $\J^n_k \cup \R^n_k \subset \J^n_{k+1} \cap \R^n_{k+1} $.
In particular, $\D^n_k \subset D^n_{k+k'}$.
\end{remark}

\begin{remark}
    \label{rem: Dnk is smaller than Fk}
Our class $\D^n_k$ is smaller than $\F^+_k$ even if $n \geq k$ (cf. Propositions \ref{prop: k>=n => Ek+=Enk and Uk+=Unk} and \ref{prop: Simga_k subset iE_k and Pi_k subset iU_k} below).
It seems to be crucial to define  $\D^{n}_{k}$ as $\J^{n}_{k} \cup \R^{n}_{k}$ in Definition \ref{def: iEiU} for our proofs in Section \ref{sec: Justification}.
\end{remark}

In the following, we show some basic properties which our classes have.

\begin{lem}\label{lem: subformula property of Dnk}
    For every formula $\vp$ and its subformula $\psi$, if $\vp\in \D^n_k$, then $\psi\in \D^n_k$.
\end{lem}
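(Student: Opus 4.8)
The plan is to prove the statement by induction on $k$, and within each level by induction on the generation of $\vp$ as a member of $\D^n_k = \J^n_k \cup \R^n_k$, following the inductive clauses of Definition \ref{def: iEiU}. Since the subformula relation is generated by the immediate-subformula relation, it suffices to show: whenever $\vp \in \D^n_k$ and $\psi$ is an \emph{immediate} subformula of $\vp$, then $\psi \in \D^n_k$; the general claim then follows by iterating. So I would fix $n$ and argue that for all $k$, every immediate subformula of a formula in $\J^n_k$ or $\R^n_k$ lies in $\D^n_k$, using Remark \ref{rem: Ekn and Ukn are subclasses of Ek+1n and Uk+1n} (namely $\D^n_k \subset \J^n_{k+1}\cap\R^n_{k+1}$, hence $\D^n_k \subset \D^n_{k+1}$) to absorb lower-level subformulas into $\D^n_k$.

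The base case $k=0$ is immediate: $\D^n_0 = \F_0$ is the set of quantifier-free formulas, and every subformula of a quantifier-free formula is quantifier-free. For the inductive step, suppose the claim holds for $k$ and let $\vp \in \D^n_{k+1} = \J^n_{k+1}\cup\R^n_{k+1}$. I would unwind how $\vp$ was put into $\J^n_{k+1}$ or $\R^n_{k+1}$ according to which of the three cases ($n>k$, $n=k$, $n<k$) of Definition \ref{def: iEiU} applies. In each clause the immediate subformulas of $\vp$ are themselves taken from $\J^n_{k+1}$, $\R^n_{k+1}$, $\D^n_k$, $\D^n_n$ (when $n<k$), or $\J^n_{n+1}$ (when $n<k$); I must check in every instance that each such immediate subformula lies in $\D^n_{k+1}$. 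For subformulas drawn from $\J^n_{k+1}$ or $\R^n_{k+1}$ this is trivial since $\J^n_{k+1}\cup\R^n_{k+1} = \D^n_{k+1}$. For those drawn from $\D^n_k$ we use $\D^n_k \subset \D^n_{k+1}$ (Remark \ref{rem: Ekn and Ukn are subclasses of Ek+1n and Uk+1n}). For the quantifier cases $\exists x E$ and $\forall x U$, the immediate subformula is $E$ resp.\ $U$, already in $\D^n_{k+1}$. There remains the one nontrivial wrinkle: in the clauses $U \to E$, $E \to U$, $D \to E$, $E \to U$, $D_0 \to E$, $E_1 \to U$, the \emph{antecedent} ranges over $\R^n_{k+1}$, $\J^n_{k+1}$, $\D^n_k$, $\D^n_n$, or $\J^n_{n+1}$; in the cases $n<k$ we have $n+1 \le k$, so $\J^n_{n+1} \subseteq \D^n_{n+1} \subseteq \D^n_k \subseteq \D^n_{k+1}$ and $\D^n_n \subseteq \D^n_k \subseteq \D^n_{k+1}$ by repeated application of Remark \ref{rem: Ekn and Ukn are subclasses of Ek+1n and Uk+1n}, so the antecedent is again in $\D^n_{k+1}$.

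The main obstacle, such as it is, is purely bookkeeping: one must enumerate every generating clause in all three cases and confirm that no immediate subformula ever escapes $\D^n_{k+1}$ — in particular that the ``mixed-level'' ingredients $\D^n_n$ and $\J^n_{n+1}$ appearing in the $n<k$ case get reabsorbed, which is exactly where the inequality $n+1 \le k$ and the monotonicity $\D^n_j \subseteq \D^n_{j+1}$ of Remark \ref{rem: Ekn and Ukn are subclasses of Ek+1n and Uk+1n} are used. Once the immediate-subformula claim is established at every level $k$, a trivial induction on the length of the subformula chain from $\psi$ up to $\vp$ yields the full statement: if $\psi$ is a subformula of $\vp \in \D^n_k$, pick a witnessing chain $\vp = \vp_0, \vp_1, \dots, \vp_\ell = \psi$ with each $\vp_{i+1}$ immediate in $\vp_i$, and apply the immediate-subformula fact $\ell$ times (noting that the level may only decrease or stay the same as we descend, and $\D^n_j \subseteq \D^n_k$ for $j \le k$, so we may always record membership at level $k$).
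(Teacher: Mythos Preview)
Your argument is correct and close in spirit to the paper's, which also fixes $n$, does (course-of-values) induction on $k$, and then a structural induction on $\vp$ with case splits on how $\vp$ entered $\J^n_{k+1}$ or $\R^n_{k+1}$. Your reduction to \emph{immediate} subformulas is a mild streamlining: the paper proves the full subformula claim directly at each level, which is why it needs course-of-values induction (to invoke the hypothesis at levels $n$ and $n+1$ when a component lies in $\D^n_n$ or $\J^n_{n+1}$), whereas you get by with ordinary induction on $k$ plus the monotonicity $\D^n_j\subseteq\D^n_{k+1}$ from Remark~\ref{rem: Ekn and Ukn are subclasses of Ek+1n and Uk+1n}.

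One point you glossed over: the base clause ``$D\in\J^n_{k+1}$ for $D\in\D^n_k$'' (and likewise for $\R^n_{k+1}$) is not a constructor clause that hands you the immediate subformulas of $\vp$. In that case $\vp$ is simply an arbitrary element of $\D^n_k$, and your sentence ``in each clause the immediate subformulas of $\vp$ are themselves taken from $\J^n_{k+1}$, $\R^n_{k+1}$, $\D^n_k$, \dots'' does not literally apply. This is precisely where the outer induction hypothesis at level $k$ is needed: by the IH, every immediate subformula of $\vp\in\D^n_k$ lies in $\D^n_k\subseteq\D^n_{k+1}$. You set this IH up but never actually invoked it in your case analysis; once you insert that one line, the argument is complete.
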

\begin{proof}
Fix $n$.
By course-of-value induction on $k$, we show that if $\vp\in \D^n_k$, then $\psi \in \D^n_k$ for any subformula $\psi$ of $\vp$.
If $\vp\in D^n_0$, then $\vp$ is quantifier-free and so are its subformulas, and hence, we are done.
For the induction step, assume that the assertion holds up to $k$ and show the assertion for $k+1$ by induction on the structure of $\vp$.
For a prime $\vp$, $\vp$ is only the subformula of $\vp$, and hence, the assertion holds trivially.
Suppose that the assertion holds for $\vp_1$ and $\vp_2$.

Let $\vp \equiv \vp_1 \land \vp_2$.
Suppose $\vp \in \D^n_{k+1}$.
Then $\vp \in \J^n_{k+1}$ or $\vp \in \R^n_{k+1}$.
If $\vp\in \J^n_{k+1}$, then $\vp_1 \land \vp_2 \in \D^n_k$ or $\vp_1, \vp_2 \in \J^n_{k+1}$.
In the former case, we are done by the induction hypothesis for $k$.
We reason in the latter case.
Let $\psi $ be a subformula of $\vp$.
We may assume $\psi \not\equiv \vp$.
Then $\psi $ is a subformula of $\vp_1$ or that of $\vp_2$.
In any case, by the induction hypothesis for $\vp_1$ or $\vp_2$, we have $\psi \in \D^n_{k+1}$.
The case of $\vp \in \R^n_{k+1}$ is similar.

Let $\vp \equiv \vp_1 \lor \vp_2$.
Suppose $\vp \in \D^n_{k+1}$.
Then $\vp \in \J^n_{k+1}$ or $\vp \in \R^n_{k+1}$.

\medskip
\noindent
{\bf Case of $\vp\in \J^n_{k+1}$:}
If $k\leq n$, then $\vp_1 \lor \vp_2 \in \D^n_k$ or $\vp_1, \vp_2 \in \J^n_{k+1}$.
As above, we have $\psi \in \D^n_{k+1}$ for any subformula $\psi$ of $\vp$.
If $k>n$, then (i) $\vp_1 \lor \vp_2 \in \D^n_k$, (ii) $\vp_1\in \J^n_{k+1}$ and $\vp_2 \in \J^n_{n+1}$, or (iii)  $\vp_1\in \J^n_{n+1}$ and $\vp_2 \in \J^n_{k+1}$.

\noindent
Case of (i):
By the induction hypothesis for $k$, we have $\psi \in \D^n_{k}\subset \D^n_{k+1}$ for any subformula $\psi$ of $\vp$.

\noindent
Case of (ii):
Let $\psi $ be a subformula of $\vp$.
We may assume $\psi \not\equiv \vp$.
Then $\psi $ is a subformula of $\vp_1$ or that of $\vp_2$.
In the former case, by the induction hypothesis for $\vp_1$, we have $\psi \in \D^n_{k+1}$.
In the latter case,  by the induction hypothesis for $n+1$ ($\leq k$), we have $\psi \in \D^n_{n+1} \subset \D^n_{k+1}$ (cf. Remark \ref{rem: Ekn and Ukn are subclasses of Ek+1n and Uk+1n}).

\noindent
Case of (iii):
Similar to the case of (ii).

\medskip
\noindent
{\bf Case of $\vp\in \R^n_{k+1}$:}
If $k<n$, then $\vp_1 \lor \vp_2 \in \D^n_k$ or $\vp_1, \vp_2 \in \R^n_{k+1}$.
As above, we have $\psi \in \D^n_{k+1}$ for any subformula $\psi$ of $\vp$.
If $k\geq n$, then (i) $\vp_1 \lor \vp_2 \in \D^n_k$, (ii) $\vp_1\in \R^n_{k+1}$ and $\vp_2 \in \D^n_n$, or (iii)  $\vp_1\in \D^n_n$ and $\vp_2 \in \R^n_{k+1}$.

\noindent
Case of (i):
By the induction hypothesis for $k$, we have $\psi \in \D^n_k \subset \D^n_{k+1}$ for any subformula $\psi$ of $\vp$.

\noindent
Case of (ii):
Let $\psi $ be a subformula of $\vp$.
We may assume $\psi \not\equiv \vp$.
Then $\psi $ is a subformula of $\vp_1$ or that of $\vp_2$.
In the former case, by the induction hypothesis for $\vp_1$, we have $\psi \in \D^n_{k+1}$.
In the latter case,  by the induction hypothesis for $n$ ($\leq k$), we have $\psi \in \D^n_{n} \subset \D^n_{k+1}$ (cf. Remark \ref{rem: Ekn and Ukn are subclasses of Ek+1n and Uk+1n}).

\noindent
Case of (iii):
Similar to the case of (ii).

Let $\vp \equiv \vp_1 \to \vp_2$.
Suppose $\vp \in \D^n_{k+1}$.
Then $\vp \in \J^n_{k+1}$ or $\vp \in \R^n_{k+1}$.

\medskip
\noindent
{\bf Case of $\vp\in \J^n_{k+1}$:}
If $k< n$, then $\vp_1 \to \vp_2 \in \D^n_k$, or $\vp_1 \in \R^n_{k+1}$ and $\vp_2 \in \J^n_{k+1}$.
If $k\geq n$, then $\vp_1 \to \vp_2 \in \D^n_k$, or $\vp_1 \in \D^n_n$ and $\vp_2 \in \J^n_{k+1}$.
As in the previous arguments, in any case, we have $\psi \in \D^n_{k+1}$ for any subformula $\psi$ of $\vp$.

\medskip
\noindent
{\bf Case of $\vp\in \R^n_{k+1}$:}
If $k\leq  n$, then $\vp_1 \to \vp_2 \in \D^n_k$, or $\vp_1 \in \J^n_{k+1}$ and $\vp_2 \in \R^n_{k+1}$.
If $k>n$, then $\vp_1 \to \vp_2 \in \D^n_k$, or $\vp_1 \in \J^n_{n+1}$ and $\vp_2 \in \R^n_{k+1}$.
As in the previous arguments, in any case, we have $\psi \in \D^n_{k+1}$ for any subformula $\psi$ of $\vp$.

Let $\vp \equiv \exists x \vp_1 $.
Suppose $\vp \in \D^n_{k+1}$.
Then $\vp \in \J^n_{k+1}$ or $\vp \in \R^n_{k+1}$.
If $\vp\in \J^n_{k+1}$, then $\exists x \vp_1 \in \D^n_k$ or $\vp_1 \in \J^n_{k+1}$.
As in the previous arguments, in any case, we have $\psi \in \D^n_{k+1}$ for any subformula $\psi$ of $\vp$.
If $\vp\in \R^n_{k+1}$, then $\exists x \vp_1 \in \D^n_k$.
By the induction hypothesis for $k$, we have $\psi \in \D^n_k \subset \D^n_{k+1}$ for any subformula $\psi$ of $\vp$.

The case for $\vp \equiv \forall x \vp_1 $ is verified similarly as in the case for $\vp \equiv \exists x \vp_1 $.    
\end{proof}

\begin{lem}
\label{lem: land Enk and Unk}
    \begin{enumerate}
        \item 
        \label{item: and Enk}
        $\vp \land \psi \in \J^n_{k+1} \Rightarrow \vp, \psi \in \J^n_{k+1}$.
        \item
        \label{item: and Unk}
        $\vp \land \psi \in \R^n_{k+1} \Rightarrow \vp, \psi \in \R^n_{k+1}$.
    \end{enumerate}
\end{lem}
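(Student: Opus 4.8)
The plan is to treat this as an inversion (generation) lemma for the inductive definitions of $\J^n_{k+1}$ and $\R^n_{k+1}$ in Definition \ref{def: iEiU}. The first step is to observe that, in each of the three cases $n>k$, $n=k$, $n<k$, the generating clauses for $\J^n_{k+1}$ that can possibly produce a formula whose principal connective is $\land$ are exactly two: the base clause ``$D\in\J^n_{k+1}$ for $D$ ranging over $\D^n_k$'' and the conjunction clause ``$E\land E'\in\J^n_{k+1}$ for $E,E'$ in $\J^n_{k+1}$''. All the remaining clauses yield formulas whose principal connective is $\lor$, $\to$, or $\exists$, so none of them can output $\vp\land\psi$. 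The same observation applies on the $\R$-side, where the two relevant clauses are ``$D\in\R^n_{k+1}$ for $D\in\D^n_k$'' and ``$U\land U'\in\R^n_{k+1}$ for $U,U'$ in $\R^n_{k+1}$'', the other clauses producing formulas with principal connective $\lor$, $\to$, or $\forall$.

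Granting this, item (\ref{item: and Enk}) is handled by a two-way case split on which clause witnesses $\vp\land\psi\in\J^n_{k+1}$. If it is the conjunction clause, then by definition $\vp,\psi\in\J^n_{k+1}$ and we are done. If it is the base clause, then $\vp\land\psi\in\D^n_k$; since $\vp$ and $\psi$ are subformulas of $\vp\land\psi$, Lemma \ref{lem: subformula property of Dnk} gives $\vp,\psi\in\D^n_k$, and then $\D^n_k\subset\J^n_{k+1}$ by Remark \ref{rem: Ekn and Ukn are subclasses of Ek+1n and Uk+1n}, so $\vp,\psi\in\J^n_{k+1}$. Item (\ref{item: and Unk}) follows by the identical argument with $\J$ replaced by $\R$ throughout, the conjunction clause now being $U\land U'$ and the appeal to Remark \ref{rem: Ekn and Ukn are subclasses of Ek+1n and Uk+1n} giving $\D^n_k\subset\R^n_{k+1}$.

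I expect the only genuinely delicate point to be the opening observation: one must check, uniformly over the three cases of Definition \ref{def: iEiU}, that no generating clause for $\J^n_{k+1}$ or $\R^n_{k+1}$ outputs a conjunction except the two named ones — in particular that a formula of the form $\vp\land\psi$ cannot enter through a clause such as $D_0\to E$ or $E\lor E_1$. This is immediate because each such clause fixes the principal connective of its conclusion, but it is the hinge of the argument; everything after it is a direct appeal to the subformula property (Lemma \ref{lem: subformula property of Dnk}) and to Remark \ref{rem: Ekn and Ukn are subclasses of Ek+1n and Uk+1n}.
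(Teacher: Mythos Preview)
Your proposal is correct and is essentially the paper's argument: in both, the proof reduces to the observation that a conjunction can enter $\J^n_{k+1}$ (resp.~$\R^n_{k+1}$) only via the base clause $D\in\D^n_k$ or the conjunction clause, and in the former case one invokes Lemma~\ref{lem: subformula property of Dnk} together with $\D^n_k\subset\J^n_{k+1}\cap\R^n_{k+1}$. The paper presents this as a simultaneous induction on $k$, but the stated induction hypothesis is never actually invoked (Lemma~\ref{lem: subformula property of Dnk} already handles the $\D^n_k$ case for all $k$), so your direct non-inductive formulation is in fact slightly cleaner while being the same argument.
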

\begin{proof}
    Fix $n$.
    We show \eqref{item: and Enk} and \eqref{item: and Unk} simultaneously by induction on $k$.

First, we show the base step.
Suppose  $\vp \land \psi \in \J^n_1 $.
By the construction of the class $\J^n_1$, we have that $\vp \land \psi \in \D^n_0$ or $\vp, \psi \in \J^n_1$.
In the latter case, we are done.
In the former case, by Lemma \ref{lem: subformula property of Dnk}, we have $\vp, \psi \in \D^n_0$, and hence, $\vp, \psi \in \J^n_1$.
In the same manner, one can also show that if  $\vp \land \psi \in \R^n_1 $, then $\vp, \psi \in \R^n_1$.

Next, we show the induction step.
Assume $k>0$ and \eqref{item: and Enk} and \eqref{item: and Unk}  hold for $k-1$.
Suppose $\vp \land \psi \in \J^n_{k+1} $.
By the construction of $\J^n_{k+1}$, we have $\vp \land \psi \in \D^n_k$ or $\vp, \psi \in \J^n_{k+1}$.
In the latter case, we are done.
In the former case, by Lemma \ref{lem: subformula property of Dnk}, we have $\vp, \psi\in \D^n_k$, and hence, $\vp, \psi\in \J^n_{k+1}$.
In the same manner, one can also show that if  $\vp \land \psi \in \R^n_{k+1} $, then $\vp, \psi \in \R^n_{k+1}$.
\end{proof}

The following two technical lemmas play a crucial role in the proofs of Lemmas \ref{lem: Enk+1 is closed under qn-rule} and \ref{lem: Unk+1 is closed under qn-rule}.
They correspond to the constructions of our classes $\J^n_k$ and $\R^n_k$ with slightly loosing the requirements in the case of $k>n$.
\begin{lem}
\label{lem: lor Enk and Unk}
    \begin{enumerate}
        \item 
        \label{item: or Enk}
         If $\vp \lor \psi \in \J^n_{k+1}$, then
\[
\left\{
\begin{array}{ll}
    \vp, \psi \in \J^n_{k+1} & \text{if }k\leq n,\\[5pt]
    \left(\vp \in \J^n_{k+1} \text{ and } \psi \in \J^n_{n+1} \right) \text{ or }  \left(\vp \in \J^n_{n+1} \text{ and } \psi \in \J^n_{k+1} \right)   & \text{if }k > n.
    \end{array}
\right.
\]
        \item
        \label{item: or Unk}
               If $\vp \lor \psi \in \R^n_{k+1}$, then
\[
\left\{
\begin{array}{ll}
    \vp, \psi \in \R^n_{k+1} & \text{if }k< n,\\[5pt]
    \left(\vp \in \R^n_{k+1} \text{ and } \psi \in \D^n_k \right) \text{ or }  \left(\vp \in \D^n_k \text{ and } \psi \in \R^n_{k+1} \right)   & \text{if }k = n,\\[5pt]
    \left(\vp \in \R^n_{k+1} \text{ and } \psi \in \J^n_{n+1} \right) \text{ or }  \left(\vp \in \J^n_{n+1} \text{ and } \psi \in \R^n_{k+1} \right)   & \text{if }k > n.
    \end{array}
\right.
\]
    \end{enumerate}
\end{lem}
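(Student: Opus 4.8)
The plan is to prove \eqref{item: or Enk} and \eqref{item: or Unk} simultaneously by induction on $k$, in the spirit of Lemma \ref{lem: land Enk and Unk} but now keeping track of the asymmetric disjunction clauses of Definition \ref{def: iEiU}. Fix $n$. If a disjunction $\vp\lor\psi$ lies in $\J^n_{k+1}$ or $\R^n_{k+1}$, then by inspection of Definition \ref{def: iEiU} it was introduced either (a) as a formula already in $\D^n_k$, or (b) by one of the disjunction clauses of the relevant case ($n>k$, $n=k$, or $n<k$). In case (b) the desired conclusion is read off directly from the shape of the clause: for instance, in the case $n<k$ the clauses $E\lor E_1$ and $E_1\lor E$ of \ref{def: iEiU} (with $E\in\J^n_{k+1}$ and $E_1\in\J^n_{n+1}$) give exactly the two disjuncts of \eqref{item: or Enk} for $k>n$, and the clauses $U\lor D_0$, $D_0\lor U$ (with $D_0\in\D^n_n$) give those of \eqref{item: or Unk} for $k>n$ once one notes $\D^n_n\subset\J^n_{n+1}$ by Remark \ref{rem: Ekn and Ukn are subclasses of Ek+1n and Uk+1n}; the clauses for $n\ge k$ are handled the same way. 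So the real content is case (a).

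In case (a) we have $\vp\lor\psi\in\D^n_k=\J^n_k\cup\R^n_k$, and Lemma \ref{lem: subformula property of Dnk} gives $\vp,\psi\in\D^n_k$. When $k\le n$ this already suffices: for \eqref{item: or Enk} we get $\vp,\psi\in\D^n_k\subset\J^n_{k+1}$, and for \eqref{item: or Unk} (covering both $k<n$ and $k=n$) we get $\vp,\psi\in\D^n_k$, which yields the $k<n$ conclusion and also the $k=n$ conclusion since $\D^n_k\subset\R^n_{k+1}$ --- all by Remark \ref{rem: Ekn and Ukn are subclasses of Ek+1n and Uk+1n}. The genuinely inductive situations are \eqref{item: or Enk} with $k>n$ and \eqref{item: or Unk} with $k>n$; then $k\ge n+1\ge 1$, so $k=(k-1)+1$ with $k-1\ge n$. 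Here we distinguish $\vp\lor\psi\in\J^n_k=\J^n_{(k-1)+1}$ from $\vp\lor\psi\in\R^n_k=\R^n_{(k-1)+1}$ and invoke the induction hypothesis for \eqref{item: or Enk}, respectively \eqref{item: or Unk}, at level $k-1$. Its output is then moved into the disjunct required by the statement via the inclusions $\J^n_k\subset\J^n_{k+1}$ and $\R^n_k\subset\D^n_k\subset\J^n_{k+1}\cap\R^n_{k+1}$ from Remark \ref{rem: Ekn and Ukn are subclasses of Ek+1n and Uk+1n}; at the boundary $k-1=n$ (that is, $k=n+1$) the hypothesis only delivers membership in $\D^n_n$, $\J^n_{n+1}$ or $\R^n_{n+1}$, which must first be upgraded using $\D^n_n\subset\J^n_{n+1}$ and $\J^n_{n+1},\R^n_{n+1}\subset\D^n_{n+1}=\D^n_k$.

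For the base case $k=0$ the inequality $k>n$ cannot hold, so only the direct sub-cases arise: case (a) is handled by Lemma \ref{lem: subformula property of Dnk} together with $\D^n_0\subset\J^n_1\cap\R^n_1$ (for \eqref{item: or Unk}, note that $n=0$ falls under the $k=n$ clause and that $\D^0_0\subset\R^0_1$ supplies the first disjunct), and case (b) follows at once from the disjunction clauses generating $\J^n_1$ and $\R^n_1$.

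I expect the main obstacle to be organizational rather than conceptual: managing the three regimes $k<n$, $k=n$, $k>n$ in both items simultaneously, and in particular handling case (a) when $k=n+1$, where reducing through $\D^n_k=\J^n_k\cup\R^n_k$ pushes us into the $k-1=n$ clause of the induction hypothesis and its outputs ($\D^n_n$, $\R^n_{n+1}$) then have to be re-absorbed into $\J^n_{n+1}$ and $\J^n_{k+1}$ (or $\R^n_{k+1}$) via Remark \ref{rem: Ekn and Ukn are subclasses of Ek+1n and Uk+1n}.
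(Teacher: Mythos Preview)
Your proposal is correct and follows essentially the same approach as the paper: a simultaneous induction on $k$ where the ``direct'' disjunction clauses of Definition \ref{def: iEiU} give the conclusion immediately, the $\D^n_k$ case for $k\le n$ is dispatched via Lemma \ref{lem: subformula property of Dnk}, and the $\D^n_k$ case for $k>n$ is handled by splitting into $\J^n_k$ versus $\R^n_k$ and invoking the induction hypothesis at $k-1$, with the boundary $k-1=n$ requiring exactly the inclusions you list from Remark \ref{rem: Ekn and Ukn are subclasses of Ek+1n and Uk+1n}. The only difference is organizational: you factor the argument as ``case (a) vs case (b)'' first and then split on $k$ versus $n$, whereas the paper splits on $k$ versus $n$ first; the underlying case analysis and the uses of the subformula lemma and the inclusions are identical.
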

\begin{proof}
 Fix $n$.
    We show \eqref{item: or Enk} and \eqref{item: or Unk} simultaneously by induction on $k$.

First, we show the base step.
To show \eqref{item: or Enk} for $0$, let $\vp \lor \psi \in \J_1^n$.
Note $0\leq n$.
By the construction of the class $\J_1^n$, we have that $\vp \lor \psi \in \D_0^n$ or $\vp, \psi \in \J_1^n$.
In the latter case, we are done.
In the former case, by Lemma \ref{lem: subformula property of Dnk}, we have $\vp, \psi \in \D_0^n$, and hence, $\vp, \psi \in \J_1^n$.
To show \eqref{item: or Unk} for $0$, let $\vp \lor \psi \in \R_1^n$.
If $n>0$, then we have $\vp, \psi \in \R_1^n$ as in the case of \eqref{item: or Enk}.
Assume $n=0$.
By the construction of the class $\R^n_1$, (i) $\vp \lor \psi \in \D_0^n $, (ii) $\vp \in \R_1^n$ and $\psi \in \D_0^n$, or (iii) $\vp \in D_0^n$ and $\psi \in \R_1^n$.
In the second and third cases, we are done.
In the first case, by Lemma \ref{lem: subformula property of Dnk}, we have $\vp, \psi \in \D_0^n$, and hence, $\vp\in \R_1^n$ and  $\psi \in \D_0^n$.

Next, we show the induction step.
Assume $k>0$ and that \eqref{item: or Enk} and \eqref{item: or Unk} hold for $k-1$.
To show $\eqref{item: or Enk} $ for $k$, let $\vp \lor \psi \in \J_{k+1}^n$.

\medskip
\noindent
{\bf Case of $k\leq n$:}
By the construction of the class $\J_{k+1}^n$, $\vp\lor \psi \in \D_k^n$ or $\vp, \psi \in \J_{k+1}^n$.
In the latter case, we are done.
In the former case, by Lemma \ref{lem: subformula property of Dnk}, we have $\vp, \psi \in \D_k^n$, and hence, $\vp, \psi \in \J_{k+1}^n$.

\medskip
\noindent
{\bf Case of $k> n$:}
By the construction of the class $\J_{k+1}^n$, $\vp\lor \psi \in \D_k^n$, $\vp \in \J_{k+1}^n$ and $ \psi \in \J_{n+1}^n$, or $ \vp \in \J_{n+1}^n$ and $\psi \in \J_{k+1}^n$.
In the last two cases, we are done.
We reason in the first case, namely, the case of $\vp\lor \psi \in \D_k^n$.
Suppose $\vp\lor \psi \in \J_k^n$.
If $k-1 \leq n$ (namely, $k-1=n$), by the induction hypothesis, we have $\vp,\psi \in \J_k^n$, and hence, $\vp \in \J_{k+1}^n$ and $\psi \in \J_{n+1}^n$.
If $k-1  > n$, by the induction hypothesis, $\vp \in \J_k^n$ and $\psi \in \J_{n+1}^n$, or $\vp \in \J_{n+1}^n$ and $\psi \in \J_k^n$.
Since $\J_k^n \subset \J_{k+1}^n$, we have that $\vp \in \J_{k+1}^n$ and $\psi \in \J_{n+1}^n$, or $\vp \in \J_{n+1}^n$ and $\psi \in \J_{k+1}^n$.
Next, suppose $\vp\lor \psi \in \R_k^n$.
If  $k-1=n$, by the induction hypothesis, $\vp \in \R_k^n$ and $\psi \in \D_{k-1}^n$, or $\vp \in \D_{k-1}^n$ and $\psi \in \R_k^n$.
Since $\R_k^n \subset \J_{k+1}^n$ and $\D_{k-1}^n=\D_n^n \subset \J_{n+1}^n$, we have that $\vp \in \J_{k+1}^n$ and $\psi \in \J_{n+1}^n$, or $\vp \in \J_{n+1}^n$ and $\psi \in \J_{k+1}^n$.
If  $k-1>n$, by the induction hypothesis, $\vp \in \R_k^n$ and $\psi \in \J_{n+1}^n$, or $\vp \in \J_{n+1}^n$ and $\psi \in \R_k^n$.
Since $\R_k^n \subset \J_{k+1}^n$, again we have that $\vp \in \J_{k+1}^n$ and $\psi \in \J_{n+1}^n$, or $\vp \in \J_{n+1}^n$ and $\psi \in \J_{k+1}^n$.

To show $\eqref{item: or Unk} $ for $k$, let $\vp \lor \psi \in \R_{k+1}^n$.

\medskip
\noindent
{\bf Case of $k< n$:}
By the construction of the class $\R_{k+1}^n$, $\vp\lor \psi \in \D_k^n$ or $\vp, \psi \in \R_{k+1}^n$.
In the latter case, we are done.
In the former case, by Lemma \ref{lem: subformula property of Dnk}, we have $\vp, \psi \in \D_k^n$, and hence, $\vp, \psi \in \R_{k+1}^n$.

\medskip
\noindent
{\bf Case of $k = n$:}
By the construction of the class $\R_{k+1}^n$, $\vp\lor \psi \in \D_k^n$, $\vp \in \R_{k+1}^n$ and $ \psi \in \D_k^n$, or $ \vp \in \D_k^n$ and $\psi \in \R_{k+1}^n$.
In the last two cases, we are done.
In the first case, by Lemma \ref{lem: subformula property of Dnk}, we have $\vp, \psi \in \D_k^n$, and hence, $\vp \in \R_{k+1}^n$ and $ \psi \in \D_k^n$.

\medskip
\noindent
{\bf Case of $k > n$:}
By the construction of the class $\R_{k+1}^n$, $\vp\lor \psi \in \D_k^n$, $\vp \in \R_{k+1}^n$ and $ \psi \in \D_n^n$, or $ \vp \in \D_n^n$ and $\psi \in \R_{k+1}^n$.
In the last two cases, since $\D_n^n\subset \J_{n+1}^n$, we are done.
We reason in the first case, namely, the case of $\vp\lor \psi \in \D_k^n$.
Suppose $\vp\lor \psi \in \J_k^n$.
If $k-1=n$, by the induction hypothesis, we have $\vp,\psi \in \J_k^n$, and hence, $\vp \in \R_{k+1}^n$ and $\psi \in \J_{n+1}^n$.
If $k-1  > n$, by the induction hypothesis, $\vp \in \J_k^n$ and $\psi \in \J_{n+1}^n$, or $\vp \in \J_{n+1}^n$ and $\psi \in \J_k^n$.
Since $\J_k^n\subset \R_{k+1}^n$, we have that $\vp \in \R_{k+1}^n$ and $\psi \in \J_{n+1}^n$, or $\vp \in \J_{n+1}^n$ and $\psi \in \R_{k+1}^n$.
Suppose $\vp\lor \psi \in \R_k^n$.
If $k-1=n$, by the induction hypothesis, $\vp\in \R_k^n$ and $\psi \in \D_n^n$, or $\vp \in \D_n^n$ and $\psi \in \R_k^n$.
Since $\R_k^n \subset \R_{k+1}^n$ and $\D_n^n \subset \J_{n+1}^n$, we have that $\vp \in \R_{k+1}^n$ and $\psi \in \J_{n+1}^n$, or $\vp \in \J_{n+1}^n$ and $\psi \in \R_{k+1}^n$.
If $k-1  > n$, by the induction hypothesis, we are done.
\end{proof}

\begin{lem}
\label{lem: to Enk and Unk}
    \begin{enumerate}
        \item 
        \label{item: to Enk}
         If $\vp \to \psi \in \J^n_{k+1}$, then
\[
\left\{
\begin{array}{ll}
    \vp\in \R^n_{k+1} \text{ and } \psi \in \J^n_{k+1} & \text{if }k< n,\\[5pt]
    \vp \in \D^n_k \text{ and } \psi \in \J^n_{k+1}   & \text{if }k = n,\\[5pt]
    \vp \in \J^n_{n+1} \text{ and  } \psi \in \J^n_{k+1} & \text{if }k > n.
    \end{array}
\right.
\]
        \item
        \label{item: to Unk}
               If $\vp \to \psi \in \R^n_{k+1}$, then
\[
\left\{
\begin{array}{ll}
    \vp\in \J^n_{k+1} \text{ and } \psi \in \R^n_{k+1} & \text{if }k\leq n,\\[5pt]
    \vp \in \J^n_{n+1} \text{ and } \psi \in \R^n_{k+1} & \text{if }k > n.
    \end{array}
\right.
\]
    \end{enumerate}
\end{lem}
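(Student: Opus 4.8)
The plan is to prove \eqref{item: to Enk} and \eqref{item: to Unk} simultaneously by induction on $k$, in exactly the style of Lemmas~\ref{lem: land Enk and Unk} and~\ref{lem: lor Enk and Unk}. The two standing tools are Lemma~\ref{lem: subformula property of Dnk}, which disposes of the degenerate generation clause $\vp\to\psi\in\D^n_k$, and the inclusions recorded in Remark~\ref{rem: Ekn and Ukn are subclasses of Ek+1n and Uk+1n}, namely $\J^n_k\cup\R^n_k\subset\J^n_{k+1}\cap\R^n_{k+1}$ and (for $k>n$) $\D^n_n\subset\J^n_{n+1}$, which are used to absorb the outcome of the induction hypothesis into the target classes. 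Throughout, $n$ is fixed and one inspects which clause of Definition~\ref{def: iEiU} produced the given implication; since $\to$ is a primitive connective, an implication $\vp\to\psi$ in $\J^n_{k+1}$ (resp.~$\R^n_{k+1}$) is either generated by the relevant ``$\to$''-clause or lies in $\D^n_k$, so no auxiliary induction on formula structure is needed.

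For the base step $k=0$: if $n=0$ we are in the case $n=k$ of Definition~\ref{def: iEiU}, and if $n>0$ in the case $n>k$; in either subcase the only implication-producing clause for $\J^n_1$ is $D\to E$ (with $D\in\D^n_0$ and $E\in\J^n_1$) and for $\R^n_1$ is $E\to U$ (with $E\in\J^n_1$ and $U\in\R^n_1$), which give the stated decompositions directly (noting $\D^n_0=\F_0\subset\J^n_1$). In the remaining possibility $\vp\to\psi\in\D^n_0$, Lemma~\ref{lem: subformula property of Dnk} gives $\vp,\psi\in\F_0\subset\J^n_1\cap\R^n_1$, settling both items. For the induction step, assume both items for $k-1$ and take $\vp\to\psi\in\J^n_{k+1}$ (resp.~$\R^n_{k+1}$); split on $k<n$, $k=n$, $k>n$ and read off the matching clause. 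If the implication was produced by the ``$\to$''-clause we are done outright (using $\D^n_n\subset\J^n_{n+1}$ from Remark~\ref{rem: Ekn and Ukn are subclasses of Ek+1n and Uk+1n} when $k>n$ to place the antecedent), and when $k\leq n$ the alternative $\vp\to\psi\in\D^n_k$ is again closed at once by Lemma~\ref{lem: subformula property of Dnk} together with $\D^n_k\subset\J^n_{k+1}\cap\R^n_{k+1}$.

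The only place genuine work happens --- hence the main obstacle --- is the regime $k>n$ with $\vp\to\psi\in\D^n_k=\J^n_k\cup\R^n_k$. Here one unwinds whether $\vp\to\psi$ lies in $\J^n_k$ or in $\R^n_k$ and applies the induction hypothesis to it, further distinguishing $k-1=n$ (so that $\J^n_k=\J^n_{n+1}$ and $\R^n_k=\R^n_{n+1}$ fall under the case $n=k$ of Definition~\ref{def: iEiU} and the ``$k=n$'' branch of the induction hypothesis is the relevant one) from $k-1>n$ (where the ``$k>n$'' branch applies). In each sub-branch the antecedent lands in $\D^n_{k-1}=\D^n_n$ or in $\J^n_{n+1}$, hence in $\J^n_{n+1}$ by Remark~\ref{rem: Ekn and Ukn are subclasses of Ek+1n and Uk+1n}, while the consequent lands in $\J^n_k$ or $\R^n_k$, which is contained in both $\J^n_{k+1}$ and $\R^n_{k+1}$ by the same remark; this yields precisely the ``$k>n$'' conclusions of \eqref{item: to Enk} and \eqref{item: to Unk}. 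The argument is bookkeeping-heavy but entirely mechanical, and structurally parallels the $k>n$ case in the proof of Lemma~\ref{lem: lor Enk and Unk}.
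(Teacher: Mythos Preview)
Your proposal is correct and follows essentially the same approach as the paper's proof: simultaneous induction on $k$, inspecting the generating clauses of Definition~\ref{def: iEiU} and using Lemma~\ref{lem: subformula property of Dnk} for the degenerate $\D^n_k$ case, with the substantive work concentrated in the regime $k>n$, $\vp\to\psi\in\D^n_k=\J^n_k\cup\R^n_k$, handled by the induction hypothesis split on $k-1=n$ versus $k-1>n$. One small slip in your base step: for $n>0$ the implication clause for $\J^n_1$ is $U\to E$ with $U\in\R^n_1$ (case $n>k$ of Definition~\ref{def: iEiU}), not $D\to E$ with $D\in\D^n_0$; this only gives the decomposition more directly, so the argument is unaffected.
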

\begin{proof}
 Fix $n$.
    We show \eqref{item: to Enk} and \eqref{item: to Unk} simultaneously by induction on $k$.

First, we show the base step.
To show \eqref{item: to Enk} for $0$, let $\vp \to \psi \in \J_1^n$.
If $n>0$, by the construction of the class $\J_1^n$, we have that $\vp \to \psi \in \D_0^n$ or $\vp\in \R_1^n$ and  $\psi \in \J_1^n$.
In the latter case, we are done.
In the former case, by Lemma \ref{lem: subformula property of Dnk}, we have $\vp, \psi \in \D_0^n$, and hence, $\vp\in \R_1^n$ and $\psi \in \J_1^n$.
Assume $n=0$.
By the construction of the class $\J^n_1$, $\vp \to \psi \in \D_0^n $, or $\vp \in \D_0^n$ and $\psi \in \J_1^n$.
In the latter case, we are done.
In the former case, by Lemma \ref{lem: subformula property of Dnk}, we have $\vp, \psi \in \D_0^n$, and hence, $\vp\in \D_0^n$ and $\psi \in \J_1^n$.
To show \eqref{item: to Unk} for $0$, let $\vp \to \psi \in \R_1^n$.
Note $0\leq n$.
By the construction of the class $\R_1^n$, $\vp \to \psi \in \D_0^n $, or $\vp \in \J_1^n$ and $\psi \in \R_1^n$.
In the latter case, we are done.
In the former case, by Lemma \ref{lem: subformula property of Dnk}, we have $\vp\in \J_1^n$ and $\psi \in \R_1^n$  as above.

Next, we show the induction step.
Assume $k>0$ and that \eqref{item: to Enk} and \eqref{item: to Unk} hold for $k-1$.
To show $\eqref{item: to Enk} $ for $k$, let $\vp \to \psi \in \J_{k+1}^n$.

\medskip
\noindent
{\bf Case of $k<n$:}
By the construction of the class $\J_{k+1}^n$, $\vp\to \psi \in \D_k^n$, or $\vp\in \R_{k+1}^n $ and $\psi \in \J_{k+1}^n$.
In the latter case, we are done.
In the former case, by Lemma \ref{lem: subformula property of Dnk}, we have $\vp, \psi \in \D_k^n$, and hence, $\vp\in \R_{k+1}^n$ and  $\psi \in \J_{k+1}^n$.

\medskip
\noindent
{\bf Case of $k=n$:}
By the construction of the class $\J_{k+1}^n$, $\vp\to \psi \in \D_k^n$, or $\vp\in \D_k^n $ and $\psi \in \J_{k+1}^n$.
In the latter case, we are done.
In the former case, by Lemma \ref{lem: subformula property of Dnk}, we have $\vp, \psi \in \D_k^n$, and hence, $\vp\in \D_k^n$ and  $\psi \in \J_{k+1}^n$.

\medskip
\noindent
{\bf Case of $k> n$  (namely, $k-1\geq n$):}
By the construction of the class $\J_{k+1}^n$, $\vp\to \psi \in \D_k^n$, or $\vp\in \D_n^n $ and $\psi \in \J_{k+1}^n$.
In the latter case, since $\D_n^n \subset \J_{n+1}^n$ we are done.
We reason in the former case, namely, the case of $\vp\to \psi \in \D_k^n$.
Suppose $ \vp\to \psi \in \J_k^n$.
If $k-1 =n$, by the induction hypothesis, $\vp\in \D_n^n$ and $\psi \in \J_k^n$.
Since $ \D_n^n \subset  \J_{n+1}^n$ and $\J_k^n \subset \J_{k+1}^n$, we have $\vp\in \J_{n+1}^n$ and $\psi \in \J_{k+1}^n$.
If $k-1 >n$, by the induction hypothesis, we are done.
Suppose $ \vp\to \psi \in \R_k^n$.
If $k-1 =n$, by the induction hypothesis, $\vp\in \J_k^n$ and $\psi \in \R_k^n$.
Since $\J_k^n=\J_{n+1}^n$ and $\R_k^n \subset \J_{k+1}^n$, we have that $\vp \in \J_{n+1}^n$ and $\psi \in \J_{k+1}^n$.
If $k-1 >n$, by the induction hypothesis, we have that $\vp \in \J_{n+1}^n$ and $\psi \in \R_{k}^n$, and hence, $\vp \in \J_{n+1}^n$ and $\psi \in \J_{k+1}^n$.

To show $\eqref{item: to Unk} $ for $k$, let $\vp \to \psi \in \R_{k+1}^n$.

\medskip
\noindent
{\bf Case of $k\leq n$:}
By the construction of the class $\R_{k+1}^n$, $\vp\to \psi \in \D_k^n$, or $\vp\in \J_{k+1}^n $ and $\psi \in \R_{k+1}^n$.
In the latter case, we are done.
In the former case, by Lemma \ref{lem: subformula property of Dnk}, we have $\vp, \psi \in \D_k^n$, and hence, $\vp\in \J_{k+1}^n$ and  $\psi \in \R_{k+1}^n$.

\medskip
\noindent
{\bf Case of $k>n$ (namely, $k-1 \geq n$):}
By the construction of the class $\R_{k+1}^n$, $\vp\to \psi \in \D_k^n$, or $\vp\in \J_{n+1}^n $ and $\psi \in \R_{k+1}^n$.
    In the latter case, we are done.
    We reason in the former case, namely, the case of $\vp\to \psi \in \D_k^n$.
    Suppose $ \vp\to \psi \in \J_k^n$.
Since $k-1\geq n$, by the induction hypothesis, we have that $\vp \in \J_{n+1}^n $ and $\psi \in \J_{k}^n$, and hence, $\vp \in \J_{n+1}^n $ and $\psi \in \R_{k+1}^n$
Suppose $ \vp\to \psi \in \R_k^n$.
If $k-1 =n$, by the induction hypothesis, $\vp\in \J_k^n$ and $\psi \in \R_k^n$.
Since $\J_k^n=\J_{n+1}^n$ and $\R_k^n \subset \R_{k+1}^n$, we have that $\vp \in \J_{n+1}^n$ and $\psi \in \R_{k+1}^n$.
If $k-1 >n$, by the induction hypothesis, we are done.
\end{proof}

\begin{lem}
\label{lem: exists Enk and Unk}
    \begin{enumerate}
        \item 
        \label{item: exists Enk}
         If $\exists x \vp  \in \J^n_{k+1}$, then $\vp  \in \J^n_{k+1}$.
        \item
        \label{item: exists Unk}
         If $\exists x \vp  \in \R^n_{k+1}$, then $\exists x \vp  \in \J^n_k$ and $k>0$.
    \end{enumerate}
\end{lem}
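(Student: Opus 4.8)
The plan is to read off both claims directly from the inductive generation of $\J^n_{k+1}$ and $\R^n_{k+1}$ in Definition~\ref{def: iEiU}, using the subformula property (Lemma~\ref{lem: subformula property of Dnk}) together with the inclusions in Remark~\ref{rem: Ekn and Ukn are subclasses of Ek+1n and Uk+1n}. The crucial preliminary step is a \emph{shape} analysis of the generating clauses, which has to be checked uniformly for the three cases $n>k$, $n=k$, $n<k$ of the definition: among the clauses generating $\J^n_{k+1}$, the only ones whose output is syntactically an existential formula are ``$D\in\J^n_{k+1}$'' (with $D$ ranging over $\D^n_k$) and ``$\exists x E\in\J^n_{k+1}$'' (with $E\in\J^n_{k+1}$); and among the clauses generating $\R^n_{k+1}$, the only one whose output can be existential is ``$D\in\R^n_{k+1}$'' with $D\in\D^n_k$, since every remaining generator yields a conjunction, a disjunction, an implication, or a universally quantified formula.

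For \eqref{item: exists Enk}: if $\exists x\vp\in\J^n_{k+1}$, then by the shape analysis either $\vp\in\J^n_{k+1}$, in which case we are done, or $\exists x\vp\in\D^n_k$. In the latter case $\vp$ is a subformula of a member of $\D^n_k$, so $\vp\in\D^n_k$ by Lemma~\ref{lem: subformula property of Dnk}, whence $\vp\in\J^n_{k+1}$ because $\D^n_k$ already feeds the clause ``$D\in\J^n_{k+1}$''. (If $k=0$ this second case cannot occur, as $\D^n_0=\F_0$ contains no quantifier, but that only shortens the argument; in particular, part~\eqref{item: exists Enk} needs no induction.)

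For \eqref{item: exists Unk}: the shape analysis forces $\exists x\vp\in\D^n_k=\J^n_k\cup\R^n_k$. Since $\exists x\vp$ contains a quantifier while $\D^n_0=\F_0$ does not, $k=0$ is impossible, so $k>0$. To obtain $\exists x\vp\in\J^n_k$ I would argue by induction on $k$: the base case $k=0$ is vacuous (there is no existential formula in $\R^n_1$, again by the shape analysis), and in the inductive step we have $\exists x\vp\in\J^n_k\cup\R^n_k$; if $\exists x\vp\in\J^n_k$ we are done, and if $\exists x\vp\in\R^n_k$ the induction hypothesis gives $\exists x\vp\in\J^n_{k-1}\subseteq\J^n_k$ by Remark~\ref{rem: Ekn and Ukn are subclasses of Ek+1n and Uk+1n}.

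No step here is genuinely hard; the whole proof is careful bookkeeping. The one place demanding attention is carrying out the shape analysis uniformly across the three cases of Definition~\ref{def: iEiU}, after which both parts fall out, the second with the short descent through the cumulative hierarchy described above.
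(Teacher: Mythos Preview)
Your proof is correct and follows essentially the same approach as the paper: a shape analysis of the generating clauses, combined with Lemma~\ref{lem: subformula property of Dnk} for part~\eqref{item: exists Enk} and a short descent for part~\eqref{item: exists Unk}. The only cosmetic difference is that for~\eqref{item: exists Unk} the paper avoids the induction entirely: once $\exists x\vp\in\D^n_k$ and $k>0$, if $\exists x\vp\in\R^n_k$ the paper simply reapplies the shape analysis to $\R^n_k=\R^n_{(k-1)+1}$ to get $\exists x\vp\in\D^n_{k-1}\subseteq\J^n_k$ in one step, whereas you wrap this same step in an induction on $k$; both are fine.
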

\begin{proof}
\eqref{item: exists Enk}:
Let $\exists x \vp \in \J_{k+1}^n$.
By the construction of $\J_{k+1}^n$, $\exists x \vp \in \D_k^n$ or $\vp \in \J_{k+1}^n$.
In the latter case, we are done.
In the former case, by Lemma \ref{lem: subformula property of Dnk}, we have $\vp\in \D_k^n$, and hence, $\vp\in \J_{k+1}^n$

\eqref{item: exists Unk}:
Let $\exists x \vp \in \R_{k+1}^n$.
By the construction of $\R_{k+1}^n$, we have $\exists x \vp \in \D_k^n$.
Since $\D_0^n $ is the set of quantifier-free formulas, we have $k>0$.
If $\exists x \vp \in \R_k^n$, by  the construction of $\R_k^n$, we have $\exists x \vp \in \D_{k-1}^n$, and hence, $\exists x \vp \in \J_k^n$.
Thus we have  $\exists x \vp \in \J_k^n$ by $\exists x \vp \in \D_k^n$.
\end{proof}

\begin{lem}
\label{lem: forall Enk and Unk}
    \begin{enumerate}
        \item 
        \label{item: forall Enk}
         If $\forall x \vp  \in \J^n_{k+1}$, then $\forall x \vp  \in \R^n_k$ and $k>0$.
        \item
        \label{item: forall Unk}
         If $\forall x \vp  \in \R^n_{k+1}$, then $\vp  \in \R^n_{k+1}$.
    \end{enumerate}
\end{lem}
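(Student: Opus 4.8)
The plan is to treat Lemma \ref{lem: forall Enk and Unk} as the exact mirror image of Lemma \ref{lem: exists Enk and Unk}, interchanging the roles of $\exists$ with $\forall$ and of $\J^n_k$ with $\R^n_k$. The one structural fact that makes everything work is visible directly from Definition \ref{def: iEiU}: in all three cases ($n>k$, $n=k$, $n<k$) the inductive generators of $\J^n_{k+1}$ include the existential clause $\exists x E \in \J^n_{k+1}$ but \emph{no} universal clause, and dually the generators of $\R^n_{k+1}$ include $\forall x U \in \R^n_{k+1}$ but no existential clause. Consequently, the only way a formula whose outermost connective is a $\forall$-quantifier can enter $\J^n_{k+1}$ is through the base clause $D \in \J^n_{k+1}$ with $D \in \D^n_k$.

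For item \eqref{item: forall Enk}, I would argue as follows. Suppose $\forall x \vp \in \J^n_{k+1}$; by the observation above, $\forall x \vp \in \D^n_k$. Since $\D^n_0 = \F_0$ is the set of quantifier-free formulas, this forces $k>0$. Now $\D^n_k = \J^n_k \cup \R^n_k$. If $\forall x \vp \in \R^n_k$ we are done. If instead $\forall x \vp \in \J^n_k = \J^n_{(k-1)+1}$, then the same observation applied at level $k-1$ gives $\forall x \vp \in \D^n_{k-1}$, and since $\D^n_{k-1} \subseteq \R^n_k$ (either by Remark \ref{rem: Ekn and Ukn are subclasses of Ek+1n and Uk+1n}, or directly from the base clause of the generation of $\R^n_k$), we again obtain $\forall x \vp \in \R^n_k$. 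This is exactly the dual of the argument used for $\exists x \vp \in \R^n_{k+1}$ in the proof of Lemma \ref{lem: exists Enk and Unk}\eqref{item: exists Unk}.

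For item \eqref{item: forall Unk}, I would argue: suppose $\forall x \vp \in \R^n_{k+1}$. By the generation clauses of $\R^n_{k+1}$, either it is produced by the clause $\forall x U \in \R^n_{k+1}$, in which case $\vp \in \R^n_{k+1}$ and we are done, or it belongs to $\D^n_k$ via the base clause; in the latter case Lemma \ref{lem: subformula property of Dnk} gives $\vp \in \D^n_k \subseteq \R^n_{k+1}$. This mirrors the proof of Lemma \ref{lem: exists Enk and Unk}\eqref{item: exists Enk}.

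I expect no serious obstacle here. The only care needed is the routine bookkeeping of verifying that the universal quantifier genuinely never occurs as a generator clause for $\J^n_{k+1}$ across the three cases of Definition \ref{def: iEiU}, together with the parallel fact that $\D^n_k$ sits inside both $\J^n_{k+1}$ and $\R^n_{k+1}$ by Remark \ref{rem: Ekn and Ukn are subclasses of Ek+1n and Uk+1n}. Unlike the disjunction and implication lemmas, no induction on $k$ with nested case distinctions is required: a single application of the ``only the base clause can introduce an outermost $\forall$'' observation, iterated once down to level $k-1$ in item \eqref{item: forall Enk}, suffices.
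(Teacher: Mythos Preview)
Your proposal is correct and is precisely the dualization that the paper intends: the paper's own proof is the single line ``Similar to the proof of Lemma \ref{lem: exists Enk and Unk},'' and what you have written is exactly that dual argument spelled out, with item \eqref{item: forall Enk} mirroring item \eqref{item: exists Unk} and item \eqref{item: forall Unk} mirroring item \eqref{item: exists Enk}.
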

\begin{proof}
    Similar to the proof of Lemma \ref{lem: exists Enk and Unk}.
\end{proof}

 Recall that the classes $\mathcal{J}_{k+1}$ and $\mathcal{R}_{k+1}$ in \cite[Definition 3.11]{FK23} are defined simultaneously as follows:
\begin{enumerate}
        \item
    $F, E\land E', E \lor E', U \to E, \exists x E\in \mathcal{J}_{k+1};$
        \item
    $F, U\land U', U \lor U', E \to U,  \forall x U \in \mathcal{R}_{k+1};$
\end{enumerate}
where $F$ ranges over formulas in $\F_k^+$, $E$ and $E'$ over those in $\mathcal{J}_{k+1}$, and $U$ and $U'$ over those in $\mathcal{R}_{k+1}$ respectively.
In \cite[Proposition 4.1]{FK23}, the authors have shown that   $\E_{k}^+ = \mathcal{J}_{k}$ and $\U_{k}^+ = \mathcal{R}_{k}$.
The following proposition states that this is also the case for $\J_{k}^{n}$ and $\R_k^n$ respectively for all $k$ and $n$ such that $k\leq n$.

 \begin{prop}
 \label{prop: k>=n => Ek+=Enk and Uk+=Unk}
For all $n\in \N$ and $k\in\N$ such that $k\leq n$, $\E^+_k = \J^n_k$ and $\U^+_k = \R^n_k$.
 \end{prop}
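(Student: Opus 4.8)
The plan is to prove the two equalities $\E^+_k=\J^n_k$ and $\U^+_k=\R^n_k$ simultaneously by induction on $k$, with $n$ fixed (and $k\le n$ throughout). The base case $k=0$ is immediate since $\E^+_0$, $\U^+_0$ and $\J^n_0=\R^n_0=\F_0$ are all the class of quantifier-free formulas. For the induction step, the key observation is that when $k\le n$ the inductive clauses generating $\J^n_{k+1}$ and $\R^n_{k+1}$ in Definition~\ref{def: iEiU} (the case $n>k$, or the case $n=k$ together with the fact that $\D^n_k\subseteq\J^n_{k+1}$ in the case $n=k$ collapses the $D$-restriction) become \emph{exactly} the clauses generating the classes $\mathcal{J}_{k+1}$ and $\mathcal{R}_{k+1}$ of \cite[Definition 3.11]{FK23}, \emph{provided} one knows that $\D^n_k$ plays the role of $\F^+_k$ in those clauses. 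Since $\E^+_k=\mathcal{J}_k$ and $\U^+_k=\mathcal{R}_k$ by \cite[Proposition 4.1]{FK23}, the whole proof reduces to establishing
\[
\D^n_k=\F^+_k\quad\text{for all }k\le n.
\]
Wait — Remark~\ref{rem: Dnk is smaller than Fk} explicitly says $\D^n_k$ is \emph{smaller} than $\F^+_k$, so this identification fails. Hence the real work is to re-run the argument of \cite[Proposition 4.1]{FK23} from scratch with $\D^n_k$ in place of $\F^+_k$, checking that the inclusion $\D^n_k\subseteq\F^+_k$ is all that is needed.

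So the actual route is: by the outer induction hypothesis $\E^+_k=\J^n_k$ and $\U^+_k=\R^n_k$, hence $\D^n_k=\J^n_k\cup\R^n_k=\E^+_k\cup\U^+_k$. For the inclusion $\E^+_{k+1}\subseteq\J^n_{k+1}$ (and dually $\U^+_{k+1}\subseteq\R^n_{k+1}$), I would argue that every formula of $\E^+_{k+1}$ is built, via the generating clauses of $\E^+_k$/$\U^+_k$ and $\F^+_k$, from pieces that at worst sit in $\E^+_k\cup\U^+_k=\D^n_k\subseteq\J^n_{k+1}\cap\R^n_{k+1}$ (Remark~\ref{rem: Ekn and Ukn are subclasses of Ek+1n and Uk+1n}); then every constructor used ($\exists$, $\land$, $\lor$, $U\to E$) is among those generating $\J^n_{k+1}$ in the case $n>k$ or $n=k$. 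The subtle point is the $\lor$ and $\to$ clauses: in the definition of $\E^+_{k+1}$ the disjuncts/antecedents range over the full $\E^+_{k+1}$ or $\U^+_{k+1}$, which is exactly what Definition~\ref{def: iEiU} allows when $k\le n$ (no degree-$n$ restriction kicks in). For the reverse inclusion $\J^n_{k+1}\subseteq\E^+_{k+1}$, one proceeds by induction on the generation of $\J^n_{k+1}$: a generator $D\in\D^n_k=\E^+_k\cup\U^+_k\subseteq\F^+_k\subseteq\E^+_{k+1}$; the constructor cases $E\land E'$, $E\lor E'$, $U\to E$, $\exists xE$ each land in $\E^+_{k+1}$ because $\E^+_{k+1}$ is closed under precisely these operations (this closure is part of, or an easy consequence of, the definition of $\E^+_k$ as recalled in \cite{FK24}). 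The dual half for $\R^n_{k+1}$ and $\U^+_{k+1}$ is symmetric.

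The main obstacle I anticipate is bookkeeping at the boundary case $n=k$ of the induction step, where Definition~\ref{def: iEiU} uses the restricted clauses $D\to E$ and $U\lor D,\ D\lor U$ (with $D\in\D^n_k$) rather than the unrestricted $U\to E$, $U\lor U'$. One must check that the restricted clauses still generate all of $\E^+_{k+1}$ and $\U^+_{k+1}$: concretely, one needs that in $\E^+_{k+1}$ the disjunction clause can always be reduced to the form where (at least) one disjunct is in $\F^+_k$, and similarly for $\U^+_{k+1}\ni D_0\lor U$; this is true because a disjunction of two genuine $\Sigma_{k+1}$-type formulas does \emph{not} live in $\Sigma^+_{k+1}$, so the $\E^+$/$\U^+$ clauses are in fact already as restrictive as the $n=k$ clauses of Definition~\ref{def: iEiU} — one should verify this against the precise definitions in \cite[Section 2]{FK24}. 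Provided that check goes through, the induction closes and both equalities follow for all $k\le n$.
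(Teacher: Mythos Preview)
Your main worry—the ``boundary case $n=k$'' in the induction step—is a misreading of the indexing. In the outer induction on $k$ (with $k\le n$), the step proves the equalities for level $k+1$ under the hypothesis $k+1\le n$; hence $k<n$ throughout the step, and Definition~\ref{def: iEiU} always supplies the \emph{unrestricted} clauses (its ``Case of $n>k$'') when building $\J^n_{k+1}$ and $\R^n_{k+1}$. The clauses in ``Case of $n=k$'' of Definition~\ref{def: iEiU} define $\J^n_{n+1}$ and $\R^n_{n+1}$, which lie outside the range of the proposition. So your entire final paragraph addresses a non-problem; and the particular fix you propose there (``a disjunction of two genuine $\Sigma_{k+1}$-type formulas does not live in $\Sigma^+_{k+1}$'') is in any case false, since $\E^+_{k+1}$ is closed under arbitrary binary disjunction.

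There is also a genuine gap in your argument for the inclusion $\E^+_{k+1}\subseteq\J^n_{k+1}$. Matching generators and constructors between $\mathcal{J}_{k+1}$ and $\J^n_{k+1}$ requires the base case $\F^+_k\subseteq\J^n_{k+1}$, but by Remark~\ref{rem: Dnk is smaller than Fk} one has $\D^n_k=\E^+_k\cup\U^+_k\subsetneq\F^+_k$, so the outer induction hypothesis does not deliver this directly and the argument as written is circular. The paper avoids this by running an inner induction on the \emph{structure of formulas} rather than on the generation of the classes: \cite[Lemma~2]{FK24} characterizes membership in $\E^+_{k+1}$ and $\U^+_{k+1}$ purely in terms of immediate subformulas, and Lemmas~\ref{lem: land Enk and Unk}--\ref{lem: forall Enk and Unk} do the same for $\J^n_{k+1}$ and $\R^n_{k+1}$ when $k<n$. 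These two characterizations coincide clause by clause, so the equivalence follows by structural induction without ever needing to place an arbitrary $\F^+_k$-formula into $\J^n_{k+1}$ as a base case.
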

 \begin{proof}
Fix $n\in \N$.
By induction on $k$, we show $\E^+_k = \J^n_k$ and $\U^+_k = \R^n_k$ for all $k\leq n$.
The base step is trivial.
For the induction step, assume $k+1 \leq n$ (then $k<n$), $\E^+_k = \J^n_k$ and $\U^+_k = \R^n_k$.

In the following, we show that for any formula $\vp$, $\vp \in \E^+_{k+1} \Leftrightarrow \vp \in \J^n_{k+1}$ and  $\vp \in \U^+_{k+1} \Leftrightarrow \vp \in \R^n_{k+1}$, by induction on the structure of formulas.

For a prime formula $\vp$, the assertion holds since $\vp \in \E^+_0 =\U^+_0 = \J^n_0 =\R^n_0$.
For the induction step, assume that the assertion holds for $\vp_1$ and $\vp_2$.

Let $\vp \equiv \vp_1 \land \vp_2$.
By \cite[Lemma 2]{FK24}, we have that $\vp_1 \land \vp_2 \in \E^+_{k+1} $ if and only if $\vp_1, \vp_2 \in \E^+_{k+1}$, which is equivalent to $\vp_1, \vp_2 \in \J^n_{k+1}$ by the induction hypothesis for $\vp_1$ and $\vp_2$.
Now, $\vp_1, \vp_2 \in \J^n_{k+1}$ implies $\vp_1 \land \vp_2 \in \J^n_{k+1}$.
On the other hand, if $\vp_1 \land \vp_2 \in \J^n_{k+1} $, by Lemma \ref{lem: land Enk and Unk}, we have $\vp_1 , \vp_2 \in \J^n_{k+1}$.
Thus we have shown $\vp_1 \land \vp_2 \in \E^+_{k+1}  \Leftrightarrow \vp_1 \land \vp_2 \in \J^n_{k+1}$.
In the same manner, we also have  $\vp_1 \land \vp_2 \in \U^+_{k+1}  \Leftrightarrow \vp_1 \land \vp_2 \in \R^n_{k+1}$.

The case of $\vp \equiv \vp_1 \lor \vp_2$ is verified as in the case of $\vp \equiv \vp_1 \land \vp_2$ with using Lemma \ref{lem: lor Enk and Unk} instead of Lemma \ref{lem: land Enk and Unk}, which works since $k<n$ now (cf. the proof of the case of $\vp \equiv \vp_1 \to \vp_2$ below).

Let $\vp \equiv \vp_1 \to \vp_2$.
By \cite[Lemma 2]{FK24}, we have that $\vp_1 \to \vp_2 \in \E^+_{k+1} $ if and only if $\vp_1\in \U^+_{k+1}$ and $\vp_2 \in \E^+_{k+1}$, which is equivalent to that $\vp_1\in \R^n_{k+1}$ and $\vp_2 \in \J^n_{k+1}$ by the induction hypothesis for $\vp_1$ and $\vp_2$.
Since $k< n$ now, $\vp_1\in \R^n_{k+1}$ and $\vp_2 \in \J^n_{k+1}$ imply $\vp_1 \to \vp_2 \in \J^n_{k+1}$.
On the other hand, if  $\vp_1 \to \vp_2 \in \J^n_{k+1}$, by Lemma \ref{lem: to Enk and Unk} (note $k<n$ now), we have $\vp_1 \in \R_{k+1}^n$ and $\vp_2 \in \J_{k+1}^n$.
Thus we have shown $\vp_1 \to \vp_2 \in \E^+_{k+1}  \Leftrightarrow \vp_1 \to \vp_2 \in \J^n_{k+1}$.
In the same manner, we also have  $\vp_1 \to \vp_2 \in \U^+_{k+1}  \Leftrightarrow \vp_1 \to \vp_2 \in \R^n_{k+1}$.

Let $\vp \equiv \exists x \vp_1$.
By \cite[Lemma 2]{FK24}, we have that $\exists x \vp_1  \in \E^+_{k+1} $ if and only if $\vp_1\in \E^+_{k+1}$, which is equivalent to $\vp_1 \in \J^n_{k+1}$ by the induction hypothesis for $\vp_1$.
Now  $\vp_1 \in \J^n_{k+1}$ implies  $\exists x \vp_1 \in \J^n_{k+1}$.
On the other hand, if $\exists x \vp_1 \in \J^n_{k+1}$, by Lemma \ref{lem: exists Enk and Unk}, we have $\vp_1 \in \J_{k+1}^n$.
Thus we have shown $\exists x \vp_1\in \E^+_{k+1}  \Leftrightarrow \exists x \vp_1 \in \J^n_{k+1}$.
Next we show the assertion for $\U^+_{k+1}$ and $\R^n_{k+1}$.
By \cite[Lemma 2]{FK24}, we have that $\exists x \vp_1  \in \U^+_{k+1} $ if and only if $\exists x \vp_1\in \E^+_{k}$, which is equivalent to $\exists x \vp_1 \in \J^n_{k}$ by the induction hypothesis for $k$.
Since $\J^n_{k} \subset \R^n_{k+1}$, if $\exists x \vp_1 \in \J^n_{k}$, then $\exists x \vp_1 \in \R^n_{k+1}$.
On the other hand, if $\exists x \vp_1 \in \R^n_{k+1}$, by Lemma \ref{lem: exists Enk and Unk}, we have  $\exists x \vp_1 \in \J^n_{k}$.
Thus we have shown $\exists x \vp_1\in \U^+_{k+1}  \Leftrightarrow \exists x \vp_1 \in \R^n_{k+1}$.

The case of $\vp \equiv \forall x \vp_1 $ is verified in a similar manner as the case of $\vp \equiv \exists x \vp_1 $ with using Lemma \ref{lem: forall Enk and Unk} instead of Lemma \ref{lem: exists Enk and Unk}.
\end{proof}

\begin{remark}
    \label{rem: classes C+}
In \cite[Section 5]{FK24}, authors defined the classes $\C_{k+1}$ as $\PF_k\cup \E_{k+1} \cup \U_{k+1}$.
Now, let us consider their cumulative variants.
Let $\C^+_0 := \F_0$ and $ \begin{displaystyle}
    \C^+_{k+1} := \bigcup_{i\leq k} C_{i+1}.
\end{displaystyle}$
Then, by Proposition \ref{prop: k>=n => Ek+=Enk and Uk+=Unk}, we have
\[
\C^+_{k+1}=\PF^+_k \cup \E_{k+1} \cup \U_{k+1} = \E^+_{k+1} \cup \U^+_{k+1} = \J^{k+1}_{k+1} \cup \R^{k+1}_{k+1} = \D^{k+1}_{k+1}.
\]
These cumulative variants are used for the restriction of prenex normalization in Definition \ref{def: rules of qn}.
\end{remark}
 
\begin{prop}
\label{prop: Simga_k subset iE_k and Pi_k subset iU_k}
\begin{enumerate}
\item
\label{item: Sigmak+ is a subset of Ek0}
$\Sigma_{k}^{+} \subset \J^0_{k}$.
\item
\label{item: Pik+ is a subset of Uk0}
 $\Pi_{k}^{+} \subset \R^0_{k}$.
\item
\label{item: Ekn is a subset of Ekn+1}
$\J^n_{k} \subset \J^{n+1}_{k}$.
\item
\label{item: Ukn is a subset of Ukn+1}
$\R^n_{k} \subset \R^{n+1}_{k}$.
 \end{enumerate}
 \end{prop}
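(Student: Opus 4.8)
The statement comprises four inclusions; items (1)--(2) concern only the parameter $n=0$, while (3)--(4) compare consecutive parameters, and the two pairs can be proved independently, each by induction on $k$.

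For (1) and (2) the plan is a simultaneous induction on $k$ establishing $\Sigma_k^+ \subset \J^0_k$ and $\Pi_k^+ \subset \R^0_k$. The base case $k=0$ is immediate since $\Sigma_0^+ = \Sigma_0 = \F_0 = \J^0_0$ and likewise for $\Pi$. For the step one first unfolds the cumulative definitions to get $\Sigma_{k+1}^+ = \Sigma_{k+1} \cup \Sigma_k^+ \cup \Pi_k^+$ and $\Pi_{k+1}^+ = \Pi_{k+1} \cup \Sigma_k^+ \cup \Pi_k^+$. The ``tail'' $\Sigma_k^+ \cup \Pi_k^+$ lands in $\J^0_k \cup \R^0_k = \D^0_k$ by the induction hypothesis, and $\D^0_k \subset \J^0_{k+1} \cap \R^0_{k+1}$ by Remark~\ref{rem: Ekn and Ukn are subclasses of Ek+1n and Uk+1n}. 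A genuine $\Sigma_{k+1}$-formula $\exists \ol{x}\,\psi$ has $\psi \in \Pi_k \subset \Pi_k^+ \subset \D^0_k$, so $\psi$ enters $\J^0_{k+1}$ by the base clause of Definition~\ref{def: iEiU}, and repeated use of the $\exists x E$-clause (which is present in all three cases of that definition) yields $\exists \ol{x}\,\psi \in \J^0_{k+1}$; the $\Pi_{k+1}$-case is symmetric via the $\forall x U$-clause. No step here is delicate.

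For (3) and (4) the plan is a course-of-values induction on $k$ proving $\J^n_k \subset \J^{n+1}_k$ and $\R^n_k \subset \R^{n+1}_k$ simultaneously for all $n$; this immediately yields $\D^n_k \subset \D^{n+1}_k$ for the relevant $k$. The base $k=0$ is trivial. In the step for $k+1$, fixing $n$, I would run a simultaneous structural induction on $\vp$ showing $\vp \in \J^n_{k+1} \Rightarrow \vp \in \J^{n+1}_{k+1}$ and $\vp \in \R^n_{k+1} \Rightarrow \vp \in \R^{n+1}_{k+1}$. A prime $\vp$ lies in $\F_0 \subset \D^n_k$, so it is handled by the hypothesis for $k$ together with Remark~\ref{rem: Ekn and Ukn are subclasses of Ek+1n and Uk+1n}. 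For compound $\vp$ one applies the relevant inversion lemma (Lemmas~\ref{lem: land Enk and Unk}, \ref{lem: lor Enk and Unk}, \ref{lem: to Enk and Unk}, \ref{lem: exists Enk and Unk}, \ref{lem: forall Enk and Unk}) to decompose the hypothesis into class-memberships of the immediate subformulas, pushes these down either by the structural hypothesis (for $\J^n_{k+1}$- and $\R^n_{k+1}$-components) or by the course-of-values hypothesis (for $\D^n_k$-, $\D^n_n$- and $\J^n_{n+1}$-components), and reassembles $\vp$ using the generating clauses of $\J^{n+1}_{k+1}$ in Definition~\ref{def: iEiU}.

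The main obstacle is the bookkeeping of case distinctions. The shape of the generating rules for $\J^n_{k+1}$ is governed by whether $n>k$, $n=k$, or $n<k$, and that of $\J^{n+1}_{k+1}$ by $n+1$ versus $k$; these descriptions mismatch exactly when $n=k$ (``$n=k$'' on the left, ``$n+1>k$'' on the right) and when $n=k-1$ (``$n<k$'' on the left, ``$n+1=k$'' on the right). In those configurations a generating clause must be reinterpreted: an implication $D_0 \to E$ with $D_0 \in \D^n_n$ obtained under ``$n<k$'' must be recognised as $D \to E$ with $D \in \D^{n+1}_k$ under ``$n+1=k$'', which is valid because $\D^n_n \subset \D^n_{n+1} = \D^n_k \subset \D^{n+1}_k$ by Remark~\ref{rem: Ekn and Ukn are subclasses of Ek+1n and Uk+1n} and the course-of-values hypothesis; similarly a disjunct lying in $\J^n_{n+1}$ must be placed into $\J^{n+1}_{n+2}$, which requires the hypothesis at the strictly smaller index $n+1$ --- this is precisely why course-of-values rather than ordinary induction on $k$ is needed --- followed again by Remark~\ref{rem: Ekn and Ukn are subclasses of Ek+1n and Uk+1n}. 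Working through every binary connective and quantifier in each of these configurations, and mirroring all of it for $\R^n_{k+1}$, is the bulk of the argument; each individual verification is routine, but there are many of them.
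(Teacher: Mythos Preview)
Your plan is correct and matches the paper's proof almost exactly: for (1)--(2) a simultaneous induction on $k$, and for (3)--(4) a course-of-values induction on $k$ with an inner structural induction on $\vp$, decomposing via the inversion Lemmas~\ref{lem: land Enk and Unk}--\ref{lem: forall Enk and Unk} and reassembling via the clauses of Definition~\ref{def: iEiU}. The one difference is that the paper first invokes Proposition~\ref{prop: k>=n => Ek+=Enk and Uk+=Unk} to dispose of the case $k+1\leq n$ at once (since then $\J^n_{k+1}=\E^+_{k+1}=\J^{n+1}_{k+1}$), leaving only $k\geq n$ to be argued; you instead intend to run through all configurations directly, which also works since, as you observed, $n>k$ is a ``matching'' configuration and causes no extra difficulty.
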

 \begin{proof}
     The clauses \eqref{item: Sigmak+ is a subset of Ek0} and \eqref{item: Pik+ is a subset of Uk0} are verified by simultaneous induction on $k$.
    
 For the clauses \eqref{item: Ekn is a subset of Ekn+1} and \eqref{item: Ukn is a subset of Ukn+1}, we show that $\J^n_{k} \subset \J^{n+1}_{k}$ and $\R^n_{k} \subset \R^{n+1}_{k}$ for all $k$ and $n$ by course-of-value induction on $k$.
The base step is trivial.
For the induction step, assume $\J^n_{k'} \subset \J^{n+1}_{k'}$ and $\R^n_{k'} \subset \R^{n+1}_{k'}$ for all $k' \leq k$ and $n$.

In the following, we fix $n$ and show that for any formula $\vp$, $\vp\in \J_{k+1}^n \Rightarrow \vp\in \J_{k+1}^{n+1}$ and $\vp\in \R_{k+1}^n \Rightarrow \vp\in \R_{k+1}^{n+1}$ by induction on the structure of formulas.
For a prime formula $\vp$, the assertion holds by Remark \ref{rem: Ekn and Ukn are subclasses of Ek+1n and Uk+1n} since $\vp \in \J^{n+1}_0 =\R^{n+1}_0$.
For the induction step, assume that the assertion holds for $\vp_1$ and $\vp_2$.
By Proposition \ref{prop: k>=n => Ek+=Enk and Uk+=Unk}, it suffices to reason only in the case of $k+1>n$, equivalently, $k\geq n$.

Suppose $\vp_1 \land \vp_2 \in \J_{k+1}^n$.
By Lemma \ref{lem: land Enk and Unk}, we have $\vp_1, \vp_2 \in \J_{k+1}^n$.
Then, by the induction hypothesis for $\vp_1$ and $\vp_2$, we have $\vp_1, \vp_2 \in \J_{k+1}^{n+1}$, and hence, $\vp_1 \land \vp_2 \in \J_{k+1}^{n+1}$.

In the same manner, one can show that if $\vp_1 \land \vp_2 \in \R_{k+1}^n$, then $\vp_1 \land \vp_2 \in \R_{k+1}^{n+1}$.

Suppose $ \vp_1 \lor \vp_2 \in \J_{k+1}^n$.
We first reason in the case of $k=n$.
By Lemma \ref{lem: lor Enk and Unk}, we have $\vp_1, \vp_2 \in \J_{k+1}^n$.
Then, by the induction hypothesis for $\vp_1$ and $\vp_2$, we have $\vp_1, \vp_2 \in \J_{k+1}^{n+1}$, and hence, $\vp_1 \lor \vp_2 \in \J_{k+1}^{n+1}$ since $k<n+1$.
We next reason in the case of $k>n$.
Then, by Lemma \ref{lem: lor Enk and Unk}, we have that $\vp_1 \in \J_{k+1}^n$ and $\vp_2 \in \J_{n+1}^n$, or $\vp_1 \in \J_{n+1}^n$ and $\vp_2 \in \J_{k+1}^n$.
Then, by the induction hypothesis for $\vp_1$ and $\vp_2$, and also the induction hypothesis for $n+1 \, (\leq k)$, we have that $\vp_1 \in \J_{k+1}^{n+1}$ and $\vp_2 \in \J_{n+1}^{n+1}$, or $\vp_1 \in \J_{n+1}^{n+1}$ and $\vp_2 \in \J_{k+1}^{n+1}$.
Since $k\geq n+1$, by Remark \ref{rem: Ekn and Ukn are subclasses of Ek+1n and Uk+1n} and the construction of the class $\J_{k+1}^{n+1}$, we have $ \vp_1 \lor \vp_2 \in \J_{k+1}^{n+1}$.

Suppose $ \vp_1 \lor \vp_2 \in \R_{k+1}^n$.

\medskip
\noindent
{\bf Case of $k=n$:}
By Lemma \ref{lem: lor Enk and Unk}, 
we have that  $\vp_1 \in \R_{k+1}^n$ and $\vp_2 \in \D_{k}^n$, or $\vp_1 \in \D_{k}^n$ and $\vp_2 \in \R_{k+1}^n$.
Then, by the induction hypothesis for $\vp_1$ and $\vp_2$, and also the induction hypothesis for $k$, we have that  $\vp_1 \in \R_{k+1}^{n+1}$ and $\vp_2 \in \D_{k}^{n+1}$, or $\vp_1 \in \D_{k}^{n+1}$ and $\vp_2 \in \R_{k+1}^{n+1}$.
Since $k< n+1$ and $\D_{k}^n\subset \R_{k+1}^n$, by Remark \ref{rem: Ekn and Ukn are subclasses of Ek+1n and Uk+1n} and the construction of the class $\R_{k+1}^{n+1}$, we have $ \vp_1 \lor \vp_2 \in \R_{k+1}^{n+1}$.

\medskip
\noindent
{\bf Case of $k>n$:}
By Lemma \ref{lem: lor Enk and Unk}, we have that  $\vp_1 \in \R_{k+1}^n$ and $\vp_2 \in \J_{n+1}^n$, or $\vp_1 \in \J_{n+1}^n$ and $\vp_2 \in \R_{k+1}^n$.
Then, by the induction hypothesis for $\vp_1$ and $\vp_2$, and also the induction hypothesis for $n+1\, (\leq k)$, we have that  $\vp_1 \in \R_{k+1}^{n+1}$ and $\vp_2 \in \J_{n+1}^{n+1}$, or $\vp_1 \in \J_{n+1}^{n+1}$ and $\vp_2 \in \R_{k+1}^{n+1}$.
Since $k\geq  n+1$ and $\J_{n+1}^{n+1}\subset \D_{n+1}^{n+1}$, by the construction of the class $\R_{k+1}^{n+1}$, we have $ \vp_1 \lor \vp_2 \in \R_{k+1}^{n+1}$.

Suppose $ \vp_1 \to \vp_2 \in \J_{k+1}^n$.

\medskip
\noindent
{\bf Case of $k=n$:}
By Lemma \ref{lem: to Enk and Unk},
we have that $\vp_1\in \D_k^n$ and $\vp_2 \in \J_{k+1}^n$.
Then, by the induction hypothesis for $k$ and also the induction hypothesis for $\vp_2$, we have $\vp_1\in \D_k^{n+1} \subset \R_{k+1}^{n+1}$ and $\vp_2 \in \J_{k+1}^{n+1}$, and hence, $\vp_1 \to \vp_2 \in \J_{k+1}^{n+1}$ since $k<n+1$.

\medskip
\noindent
{\bf Case of $k>n$:}
Then, by Lemma \ref{lem: to Enk and Unk}, we have that $\vp_1\in \J_{n+1}^n$ and $\vp_2 \in \J_{k+1}^n$.
Then, by the induction hypothesis for $n+1 \, (\leq k)$ and the induction hypothesis for $\vp_2$, we have $\vp_1\in \J_{n+1}^{n+1} \subset \D_{n+1}^{n+1}$ and $\vp_2 \in \J_{k+1}^{n+1}$, and hence, $\vp_1 \to \vp_2 \in \J_{k+1}^{n+1}$ since $k\geq n+1$.

Suppose $ \vp_1 \to \vp_2 \in \R_{k+1}^n$.

\medskip
\noindent
{\bf Case of $k=n$:}
By Lemma \ref{lem: to Enk and Unk}, we have that  $\vp_1 \in \J_{k+1}^n$ and $\vp_2 \in \R_{k+1}^n$.
Then, by the induction hypothesis for $\vp_1$ and $\vp_2$, we have that  $\vp_1 \in \J_{k+1}^{n+1}$ and $\vp_2 \in \R_{k+1}^{n+1}$.
Since $k< n+1$, by the construction of the class $\R_{k+1}^{n+1}$, we have $ \vp_1 \to \vp_2 \in \R_{k+1}^{n+1}$.

\medskip
\noindent
{\bf Case of $k>n$:}
By Lemma \ref{lem: to Enk and Unk}, we have that  $\vp_1 \in \J_{n+1}^n$ and $\vp_2 \in \R_{k+1}^n$.
Then, by the induction hypothesis for $n+1 \, (\leq k)$ and also the induction hypothesis for $\vp_2$, we have that  $\vp_1 \in \J_{n+1}^{n+1} \subset \J_{n+2}^{n+1}$ and $\vp_2 \in \R_{k+1}^{n+1}$.
Since $k\geq  n+1$, by the construction of the class $\R_{k+1}^{n+1}$, we have $ \vp_1 \to \vp_2 \in \R_{k+1}^{n+1}$.

Suppose $\exists x \vp_1 \in \J_{k+1}^n$.
By Lemma \ref{lem: exists Enk and Unk}, we have $\vp_1 \in \J_{k+1}^n$.
Then, by the induction hypothesis for $\vp_1$, we have $\vp_1 \in \J_{k+1}^{n+1}$, and hence, $\exists x \vp_1 \in \J_{k+1}^{n+1}$.

Suppose $\exists x \vp_1 \in \R_{k+1}^n$.
By Lemma \ref{lem: exists Enk and Unk}, we have $\exists x \vp_1 \in \J_{k}^n$.
Then, by the induction hypothesis for $k$, we have $\exists x \vp_1 \in \J_{k}^{n+1}$, and hence, $\exists x \vp_1 \in \R_{k+1}^{n+1}$ by Remark \ref{rem: Ekn and Ukn are subclasses of Ek+1n and Uk+1n}.

As in the case for $\exists x \vp_1 $, one can show that $\forall x \vp_1 \in \J_{k+1}^n \Rightarrow \forall x \vp_1\in \J_{k+1}^{n+1}$ and $\forall x \vp_1 \in \R_{k+1}^n \Rightarrow \forall x \vp_1\in \R_{k+1}^{n+1}$ by using Lemma \ref{lem: forall Enk and Unk} instead of Lemma \ref{lem: exists Enk and Unk}.
\end{proof}

 Propositions \ref{prop: k>=n => Ek+=Enk and Uk+=Unk} and \ref{prop: Simga_k subset iE_k and Pi_k subset iU_k} and Remark \ref{rem: Ekn and Ukn are subclasses of Ek+1n and Uk+1n} state that our classes $\J_k^n$ and $\R_k^n$ have the following relation:
 \begin{itemize}
 \item
$\J_0^n\subset \J_1^n\subset \dots $;
\item
$\R_0^n\subset \R_1^n\subset \dots $;
\item
$\Sigma_{k}^{+} \subset \J^{0}_{k}\subset \J^{1}_{k}\subset \dots \subset \J^{k}_{k}  = \J^{k+1}_{k} =\dots =   \E_{k}^{+}$;
\item
$\Pi_{k}^{+} \subset \R^{0}_{k}\subset \R^{1}_{k}\subset \dots \subset \R^{k}_{k}  = \R^{k+1}_{k} =\dots =   \U_{k}^{+}$;
\end{itemize}
 which is visualized in Table \ref{table: HC}.


 \begin{table}
\[
\begin{array}{l|ccccccc}
\hline\\[-10pt]
\text{Intuitionistic Hierarchy}&\J^{0}_{0} & \multicolumn{1}{|c}{\J^{0}_{1}} & \J^{0}_{2} & \J^{0}_{3} & \dots & \J^{0}_{k+1}&\dots \\[3pt]
\cline{3-3}\\[-10pt]
&\J^{1}_{0} &  \J^{1}_{1} & \multicolumn{1}{|c}{\J^{1}_{2}} & \J^{1}_{3} & \dots & \J^{1}_{k+1}&\dots \\[3pt]
\cline{4-4}\\[-10pt]
&\J^{2}_{0} &  \J^{2}_{1} & \J^{2}_{2} & \multicolumn{1}{|c}{\J^{2}_{3}} & \dots & \J^{2}_{k+1}&\dots \\[3pt]
\cline{5-5}\\[3pt]
&\vdots &&&&\ddots & \vdots & \\[3pt]
&&&&&&  \multicolumn{1}{|c}{\J^{k}_{k+1}}&\J^{k}_{k+2} \\[4pt]
\cline{7-7}\\[-10pt]
&&&&&&  \J^{k+1}_{k+1}& \multicolumn{1}{|c}{\J^{k+1}_{k+2}} \\[3pt]
\cline{8-8}\\[3pt]
&\parallel&\parallel&\parallel&\parallel&\dots & \parallel&\\[5pt]
\text{Classical Hierarchy}&\E^{+}_{0} & \E^{+}_{1} &\E^{+}_{2} &\E^{+}_{3} & \dots &\E^{+}_{k+1}  &\dots\\[2pt]
\hline
\end{array}
\]
\caption{Hierarchical Classes}
\label{table: HC}
\end{table}

\section{Justification by semi-classical prenex normalization}
\label{sec: Justification}
In what follows, we justify our semi-classical hierarchical classes by showing that $\J^{n}_{k}$ and $\R^{n}_{k}$ are exactly the classes corresponding to $\Sigma_{k}^{+}$ and $\Pi_{k}^{+}$ respectively with respect to the prenex normalization restricted to some reasonable formula classes of degree $n$.

Based on the formulation of the classes in Definition \ref{def: iEiU}, we shall give an appropriate definition of semi-classical transformations which are possible in intuitionistic logic augmented with assuming the decidability of formulas in $\E^+_n\cup \U^+_n$.
Following \cite[Section 5]{FK24}, let $\C^+_n := \E^+_n \cup \U^+_n$ (cf. Remark \ref{rem: classes C+}).
First, since the rules $(\to \exists)$ and the converses of $(\forall \lor)$ and $(\lor \forall)$ are derivable only for decidable $\delta$ (as already mentioned in Section  \ref{sec: scPN}), we should restrict the rules $(\to \exists)$ and the converses of  $(\forall \lor)$ and $(\lor \forall)$ to those with $\delta\in \C^+_n$.
The restriction to the rule $(\forall \to)$ is more delicate.
In fact, $(\forall x \QF{\xi}(x) \to \perp) \to \exists x (\QF{\xi}(x)\to \perp)$ with quantifier-free $\QF{\xi}(x)$ already implies $\Sigma_1$-DNE over $\HA$, and hence, is not provable in $\HA$.
On the other hand, the rule $(\forall \to)$ is derivable in an intuitionistic theory which proves the law-of-excluded-middle for $\exists x \neg \xi (x)$ and the double-negation-elimination for $\xi (x)$ (see the proof of Theorem \ref{thm: equivalents of Sn-LEM over HA} below).
For example, the restricted variant of the rule $(\forall \to)$ where $\xi(x)\in \U_1^+$ is derivable in $\HA + \Sigma_1\text{-}{\rm LEM}$.
Thus the rule $(\forall \to)$ should not be contained even for decidable $\xi(x)$ when $n=0$ but it should be contained for $\xi (x)$ such that $\exists x \neg \xi (x) $ is decidable for $n>0$.

As a justification of our restriction of the semi-classical prenex normalizations, we first present some facts in the context of intuitionistic arithmetic.
\begin{thm}
\label{thm: equivalents of Sn-LEM over HA}
For $n>0$, the following are equivalent over $\HA:$
\begin{enumerate}
\item
\label{item: Sn-LEM}
$\LEM{\Sigma_{n}};$
\item
\label{item: Dnn-LEM}
$\LEM{\C^+_n}$
\item
\label{item: forall -> rule restricted to Unn}
$(\forall x \xi(x) \to \delta) \to \exists x(\xi(x) \to \delta)$ where $\xi(x) \in \U^+_{n} \text{ and }x \notin \FV{\delta};$
\item
\label{item: ->exists rule restricted to Dnn}
$( \delta \to \exists x \xi(x)) \to \exists x( \delta \to \xi(x))$ where $\delta \in \C^+_{n} \text{ and }x \notin \FV{\delta};$
\item
\label{item: forall v rule restricted to Dnn}
$\forall x( \xi(x) \lor \delta) \to \forall x \xi(x) \lor \delta$ where $\delta \in \C^+_{n} \text{ and }x \notin \FV{\delta}.$
\end{enumerate}
\end{thm}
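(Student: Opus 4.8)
The plan is to establish the cycle of implications
\[
\eqref{item: Sn-LEM} \Rightarrow \eqref{item: Dnn-LEM} \Rightarrow \eqref{item: forall -> rule restricted to Unn} \Rightarrow \eqref{item: ->exists rule restricted to Dnn} \Rightarrow \eqref{item: forall v rule restricted to Dnn} \Rightarrow \eqref{item: Sn-LEM}
\]
over $\HA$, so that all five schemata coincide. For \eqref{item: Sn-LEM}$\Rightarrow$\eqref{item: Dnn-LEM}, recall that $\C^+_n = \E^+_n \cup \U^+_n$, so it suffices to show that every formula in $\E^+_n \cup \U^+_n$ is, provably in $\HA$, decidable once $\LEM{\Sigma_n}$ is available. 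This is a routine induction on the build-up of $\E^+_n$ and $\U^+_n$ (closure under $\land$, $\lor$, $\to$, bounded-type quantifiers that do not leave the class, and the base case of $\Sigma_n \cup \Pi_n$ formulas which are prenex and hence handled directly by $\LEM{\Sigma_n}$ together with the intuitionistic equivalence $\neg\neg$-stability of decidable formulas). The converse direction \eqref{item: Dnn-LEM}$\Rightarrow$\eqref{item: Sn-LEM} is immediate since $\Sigma_n \subseteq \Sigma_n^+ \subseteq \E^+_n \subseteq \C^+_n$.

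For \eqref{item: Dnn-LEM}$\Rightarrow$\eqref{item: forall -> rule restricted to Unn}, I would argue: assume $\xi(x) \in \U^+_n$ and $\forall x\, \xi(x) \to \delta$. By \eqref{item: Dnn-LEM} we have $\xi(x) \lor \neg\xi(x)$ for each $x$, hence $\exists x\,\neg\xi(x) \lor \neg\exists x\,\neg\xi(x)$ is available (the existential $\exists x\,\neg\xi(x)$ is in $\C^+_n$ up to the prenex normalization, or one argues directly that decidability of $\xi$ pointwise plus $\LEM{\C^+_n}$ gives $\LEM{}$ for $\exists x\,\neg\xi(x)$ since that formula lies in $\C^+_n$). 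In the case $\exists x\,\neg\xi(x)$, pick a witness $x_0$ with $\neg\xi(x_0)$; then $\xi(x_0) \to \delta$ holds trivially, so $\exists x(\xi(x)\to\delta)$. In the case $\neg\exists x\,\neg\xi(x)$, using decidability of each $\xi(x)$ we get $\forall x\,\xi(x)$, hence $\delta$, hence $\xi(x_0)\to\delta$ for any $x_0$. Either way $\exists x(\xi(x)\to\delta)$.

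The middle implications \eqref{item: forall -> rule restricted to Unn}$\Rightarrow$\eqref{item: ->exists rule restricted to Dnn}$\Rightarrow$\eqref{item: forall v rule restricted to Dnn} I expect to be the technically fussiest part, and this is where the delicate point flagged in the surrounding text — that $(\forall\to)$ must be restricted differently from $(\to\exists)$ and the $(\forall\lor)$-converses — has to be exploited carefully. For \eqref{item: forall -> rule restricted to Unn}$\Rightarrow$\eqref{item: ->exists rule restricted to Dnn}, given $\delta \in \C^+_n$ and $\delta \to \exists x\,\xi(x)$, one first notes $\C^+_n \subseteq \U^+_n$ (since $\C^+_n = \E^+_n \cup \U^+_n$ and, by Remark~\ref{rem: Ekn and Ukn are subclasses of Ek+1n and Uk+1n} together with Proposition~\ref{prop: k>=n => Ek+=Enk and Uk+=Unk}, $\E^+_n \subseteq \U^+_{n+1}$; but more simply, for the purposes of applying \eqref{item: forall -> rule restricted to Unn} one uses that $\neg\delta$ or an equivalent lies in $\U^+_n$ up to logical manipulation, exploiting that $\delta$ is decidable). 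From $\delta \to \exists x\,\xi(x)$ one derives $\forall x\,\neg\xi(x) \to \neg\delta$; applying \eqref{item: forall -> rule restricted to Unn} (with $\neg\xi(x)$ in the relevant class and $\neg\delta$ as the side formula) gives $\exists x(\neg\xi(x)\to\neg\delta)$, and then decidability of $\delta$ lets one convert $\neg\xi(x)\to\neg\delta$ into $\delta\to\xi(x)$ (using $\delta\lor\neg\delta$: if $\delta$ then from $\neg\delta$ absurdity would follow, so if additionally $\neg\xi(x)$ we get $\neg\delta$, contradiction, hence $\neg\neg\xi(x)$; to finish one needs $\neg\neg\xi(x)\to\xi(x)$, i.e. stability of $\xi(x)$, which again follows from decidability of $\delta$ combined with the hypothesis, or one carries the weaker conclusion and closes it off later). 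For \eqref{item: ->exists rule restricted to Dnn}$\Rightarrow$\eqref{item: forall v rule restricted to Dnn}: from $\forall x(\xi(x)\lor\delta)$ and decidability of $\delta$, in the case $\neg\delta$ we get $\forall x\,\xi(x)$ directly, and in the case $\delta$ we are done; alternatively one uses \eqref{item: ->exists rule restricted to Dnn} applied to a suitable instance to extract a witness. Finally \eqref{item: forall v rule restricted to Dnn}$\Rightarrow$\eqref{item: Sn-LEM}: given a $\Sigma_n$ formula $\sigma \equiv \exists y\,\pi(y)$ with $\pi \in \Pi_{n-1}$, consider $\forall z(\pi(z) \lor \neg\sigma)$ — wait, more carefully, one applies \eqref{item: forall v rule restricted to Dnn} with an instance whose side formula is $\neg\sigma$ (which is in $\Pi_n \subseteq \C^+_n$, hence admissible) and the quantified disjunct ranges over a decidable matrix, yielding $\forall\,(\cdots) \lor \neg\sigma$ which collapses to $\sigma \lor \neg\sigma$ after the standard manipulation. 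The main obstacle throughout is keeping track of exactly which classes the auxiliary formulas ($\neg\xi$, $\neg\delta$, $\exists x\,\neg\xi(x)$, $\neg\sigma$) land in so that each application of \eqref{item: forall -> rule restricted to Unn}, \eqref{item: ->exists rule restricted to Dnn}, or \eqref{item: forall v rule restricted to Dnn} is licensed by its side condition; this bookkeeping uses Remark~\ref{rem: Ekn and Ukn are subclasses of Ek+1n and Uk+1n}, Proposition~\ref{prop: k>=n => Ek+=Enk and Uk+=Unk}, and closure properties of $\E^+_n, \U^+_n$ under the propositional connectives, but it must be done with some care since the classes are not closed under negation in general.
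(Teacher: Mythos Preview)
Your cycle $\eqref{item: Sn-LEM}\Rightarrow\eqref{item: Dnn-LEM}\Rightarrow\eqref{item: forall -> rule restricted to Unn}\Rightarrow\eqref{item: ->exists rule restricted to Dnn}\Rightarrow\eqref{item: forall v rule restricted to Dnn}\Rightarrow\eqref{item: Sn-LEM}$ breaks at the middle links. In \eqref{item: ->exists rule restricted to Dnn} the formula $\xi(x)$ is \emph{completely unrestricted}; only $\delta$ is required to lie in $\C^+_n$. Your reduction of \eqref{item: ->exists rule restricted to Dnn} to \eqref{item: forall -> rule restricted to Unn} passes through $\forall x\,\neg\xi(x)\to\neg\delta$ and then invokes \eqref{item: forall -> rule restricted to Unn} with $\neg\xi(x)$ in the $\U^+_n$ slot---but there is no reason $\neg\xi(x)\in\U^+_n$. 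The subsequent step, converting $\neg\xi(x)\to\neg\delta$ into $\delta\to\xi(x)$, needs $\neg\neg\xi(x)\to\xi(x)$ for arbitrary $\xi$, which you cannot get from decidability of $\delta$ alone. The same problem recurs in your \eqref{item: ->exists rule restricted to Dnn}$\Rightarrow$\eqref{item: forall v rule restricted to Dnn}: you case-split on $\delta\lor\neg\delta$, but that is precisely \eqref{item: Dnn-LEM}, which you have not derived from \eqref{item: ->exists rule restricted to Dnn} at this point in the cycle.

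The paper sidesteps this by using a hub-and-spoke pattern rather than a cycle: it proves \eqref{item: Dnn-LEM}$\Rightarrow$\eqref{item: forall -> rule restricted to Unn}, \eqref{item: Dnn-LEM}$\Rightarrow$\eqref{item: ->exists rule restricted to Dnn}, \eqref{item: Dnn-LEM}$\Rightarrow$\eqref{item: forall v rule restricted to Dnn} directly (your arguments for these directions are essentially fine), and then separately proves each of \eqref{item: forall -> rule restricted to Unn}, \eqref{item: ->exists rule restricted to Dnn}, \eqref{item: forall v rule restricted to Dnn} $\Rightarrow$ \eqref{item: Sn-LEM}. The key device for those reverse implications is an induction on $i\le n$ establishing $\LEM{\Sigma_i}$ level by level: at stage $i{+}1$, one takes $\vp(\bar x)\in\Pi_i$, uses the inductive $\LEM{\Sigma_i}$ to get pointwise decidability of $\vp(\bar x)$, and then applies the schema in question to a carefully chosen instance whose side formula (e.g.\ $\exists\bar x\,\vp(\bar x)$ or $\forall\bar x\,\neg\vp(\bar x)$) lies in $\C^+_{i+1}\subseteq\C^+_n$ or $\U^+_{i+1}\subseteq\U^+_n$. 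This stepwise climb is what lets the class restrictions be met at each application; your single-shot arguments for \eqref{item: forall v rule restricted to Dnn}$\Rightarrow$\eqref{item: Sn-LEM} gesture at this but omit the induction, and your direct links \eqref{item: forall -> rule restricted to Unn}$\Rightarrow$\eqref{item: ->exists rule restricted to Dnn}$\Rightarrow$\eqref{item: forall v rule restricted to Dnn} cannot be repaired without first recovering \eqref{item: Sn-LEM} anyway.
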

\begin{proof}
Fix $n>0$.
Implication $(\ref{item: Sn-LEM} \Rightarrow \ref{item: Dnn-LEM})$ follows from \cite[Proposition 6.9]{FK23}.

$ (\ref{item: Dnn-LEM} \Rightarrow \ref{item: forall -> rule restricted to Unn}  )$:
We reason in $\HA + \LEM{\C^+_n}$.
Fix $\xi(x) \in  \U^+_{n}$ and $\delta$ such that $x \notin \FV{\delta}$.
Then we have $\neg \xi(x) \equiv \xi(x) \to \bot \in \E^+_n$, and hence, $\exists  x \neg \xi(x) \in \E^+_n$.
By $\LEM{\C^+_n}$, we have $\exists x \neg \xi(x) \lor \neg \exists x \neg \xi(x)$.
In the former case, by the reasoning in $\HA$, we have $\exists x( \xi(x) \to \delta)$ straightforwardly.
In the latter case, by the reasoning in $\HA$, we have $\forall x \neg \neg \xi(x)$, and hence, $\forall x  \xi(x)$ by $\DNE{\U^+_n}$, which is derived from $\LEM{\C^+_n}$.
Therefore, in any case, we have $(\forall x  \xi(x) \to \delta)\to \exists x(\xi(x) \to \delta)$.

$ (\ref{item: forall -> rule restricted to Unn} \Rightarrow \ref{item: Sn-LEM} )$:
We show that for all $i\leq n$, $\HA +\eqref{item: forall -> rule restricted to Unn}$
proves $\LEM{\Sigma_i}$ by induction on $i$.
The base case is trivial.
For the induction step, assume $i+1 \leq n$ and that $\HA +\eqref{item: forall -> rule restricted to Unn}$ proves $\LEM{\Sigma_i}$.
We show $\exists \ol{x}  \vp(\ol{x}) \lor \neg \exists \ol{x} \vp(\ol{x})$ for $ \vp(\ol{x}) \in \Pi_i$ inside $\HA +\eqref{item: forall -> rule restricted to Unn}$.
Since $\exists \ol{x}  \vp(\ol{x}) \lor \neg \exists \ol{x}  \vp(\ol{x})$ is equivalent (over $\HA$) to
\begin{equation}
\label{eq: equivalent of the instance of Sn-LEM}
\exists \ol{x}  \left( \vp(\ol{x} ) \lor \forall \ol{x}  \neg \vp(\ol{x} )\right),\tag{a}
\end{equation}
it suffices to derive \eqref{eq: equivalent of the instance of Sn-LEM}.
By Proposition \ref{prop: Simga_k subset iE_k and Pi_k subset iU_k}, we have $\vp(\ol{x} ) \in \U_i^+$ and $\bot \in \E_i^+$.
Then we have $\neg \vp(\ol{x} ) \in \E_i^+$, and hence, $ \forall \ol{x}  \neg \vp(\ol{x} ) \in \U_{i+1}^+ \subset \U_n^+$.
Therefore, by \eqref{item: forall -> rule restricted to Unn}, we have
\begin{equation}
    \label{eq: formula from (4)}
\exists \ol{x}  \left( \neg \vp(\ol{x} ) \to \forall \ol{x} \neg \vp(\ol{x} )\right).\tag{b}
\end{equation}
Since $\forall \ol{x} \left( \vp(\ol{x} ) \lor \neg \vp(\ol{x} ) \right)$ is derived from $\LEM{\Sigma_i}$, we obtain \eqref{eq: equivalent of the instance of Sn-LEM} from \eqref{eq: formula from (4)}.

$(\ref{item: Dnn-LEM} \Rightarrow \ref{item: ->exists rule restricted to Dnn})$:
We reason in $\HA + \LEM{\C_n^+}$.
Fix $\xi(x)$ and $\delta \in \C_n^+$ such that $x \notin \FV{\delta}$.
By $\LEM{\C_n^+}$, we have $\delta \lor \neg \delta$.
In former case, $\delta \to \exists x \xi(x)$ implies $\exists x \xi(x) $, which implies $\exists x(\delta \to \xi(x) )$.
In the latter case, we have $\delta \to \xi(x)$, and hence, $\exists x (\delta \to \xi(x))$.
Therefore, in any case, we have $(\delta \to \exists x \xi(x)) \to \exists x (\delta \to \xi(x))$. 

$(\ref{item: ->exists rule restricted to Dnn} \Rightarrow \ref{item: Sn-LEM} )$:
We show that for all $i\leq n$, $\HA +\eqref{item: ->exists rule restricted to Dnn}$
proves $\LEM{\Sigma_i}$ by induction on $i$.
The base case is trivial.
For the induction step, assume $i+1 \leq n$ and that $\HA +\eqref{item: ->exists rule restricted to Dnn}$ proves $\LEM{\Sigma_i}$.
We show $\exists \ol{x}  \vp(\ol{x}) \lor \neg \exists \ol{x} \vp(\ol{x})$ for $ \vp(\ol{x}) \in \Pi_i$ inside $\HA +\eqref{item: ->exists rule restricted to Dnn}$.
Now we have $ \exists \ol{x} \vp(\ol{x} ) \in \E_{i+1}^+ \subset \E_n^+ \subset \C_n^+$.
Therefore, by \eqref{item: ->exists rule restricted to Dnn}, we have
\begin{equation}
    \label{eq: formula from ->exists rile for Dnn}
\exists \ol{x}  \left( \exists \ol{x} \vp(\ol{x} ) \to \vp(\ol{x} )\right).\tag{c}
\end{equation}
Since $\neg \exists \ol{x} \vp(\ol{x}) $ is equivalent to $\forall \ol{x} \neg \vp(\ol{x})$ over $\HA$ and  $\forall \ol{x} \left( \vp(\ol{x} ) \lor \neg \vp(\ol{x} ) \right)$ is derived from $\LEM{\Sigma_i}$, \eqref{eq: formula from ->exists rile for Dnn} implies \eqref{eq: equivalent of the instance of Sn-LEM}, which is equivalent to $\exists \ol{x}  \vp(\ol{x}) \lor \neg \exists \ol{x} \vp(\ol{x})$ over $\HA$.

$(\ref{item: Dnn-LEM} \Rightarrow \ref{item: forall v rule restricted to Dnn})$:
We reason in $\HA + \LEM{\C_n^+}$.
Fix $\xi(x)$ and $\delta \in \C_n^+$ such that $x \notin \FV{\delta}$.
By $\LEM{\C_n^+}$, we have $\delta \lor \neg \delta$.
In former case, we have $\forall x \xi(x) \lor \delta$ trivially.
In the latter case, $\forall x (\xi(x) \lor \delta)$ implies $\forall x \xi(x) $, which implies $\forall x \xi(x) \lor \delta$.
Therefore, in any case, we have $\forall x (\xi(x) \lor \delta) \to \forall x \xi(x) \lor \delta$. 

$(\ref{item: forall v rule restricted to Dnn} \Rightarrow \ref{item: Sn-LEM})$:
We show that for all $i\leq n$, $\HA +\eqref{item: forall v rule restricted to Dnn} $
proves $\LEM{\Sigma_i}$ by induction on $i$.
The base case is trivial.
For the induction step, assume $i+1 \leq n$ and that $\HA +\eqref{item: forall v rule restricted to Dnn}$ proves $\LEM{\Sigma_i}$.
We show $\exists \ol{x}  \vp(\ol{x}) \lor \neg \exists \ol{x} \vp(\ol{x})$ for $ \vp(\ol{x}) \in \Pi_i$ inside $\HA +\eqref{item: forall v rule restricted to Dnn}$.
By  $\LEM{\Sigma_i}$, we have  $\forall \ol{x} \left( \neg \vp(\ol{x} ) \lor  \vp(\ol{x} ) \right)$, and hence,
\begin{equation}
\label{eq: cons of Si-LEM}
    \forall \ol{x} \left(\neg \vp(\ol{x} ) \lor  \exists \ol{x} \vp(\ol{x} ) \right). \tag{d}
\end{equation}
Now we have $ \exists \ol{x} \vp(\ol{x} ) \in \E_{i+1}^+ \subset \E_n^+ \subset \C_n^+$.
Therefore, by \eqref{item: forall v rule restricted to Dnn} and \eqref{eq: cons of Si-LEM}, we have $ \exists \ol{x} \vp(\ol{x} ) \lor \forall \ol{x} \neg \vp(\ol{x} )$, equivalently, $ \exists \ol{x} \vp(\ol{x} ) \lor \neg \exists \ol{x} \vp(\ol{x} )$.
\end{proof}

Based on the above observations, we define hierarchical semi-classical prenex normalization as follows:

\begin{defn}
\label{def: rules of qn}
$\vp \qn \psi$ denotes that $\psi$ is obtained from $\vp$ by substitutions with respect to the rules $(\exists \to)$, $(\forall \to )_{n}$ if $n\neq 0$, $(\to \exists)_{n}$, $(\to \forall )$, $(\exists \land)$,  $(\forall \land)$, $(\land \exists )$,  $( \land \forall)$, $(\exists \lor)$,  $(\forall \lor)_{n}$, $(\lor \exists )$,  $( \lor \forall)_{n}$,
$(\exists \text{-}var)$ and $(\forall \text{-}var)$ to subformula occurrences of which proper subformulas are in prenex normal form subsequently in finite many times, where the rules  $(\exists \to)$, $(\to \forall)$, $(\exists \land)$, $(\forall \land )$, $(\land \exists)$, $(\land \forall)$, $(\exists \lor)$, $(\lor \exists )$, $(\exists\text{-var})$ and $(\forall\text{-var})$ are given in Table \ref{table: PN},
and the others are the following:
\begin{center}
\begin{tabular}{rcccl}
$(\forall \to)_{n}$: & $\forall x \xi(x) \to \delta$ &$\rightsquigarrow$ & $\exists x(\xi(x) \to \delta)$ & with $\xi(x) \in \U_n^+$ and $x\notin \FV{\delta};$\\[2pt]
$(\to \exists )_{n}$: & $\delta \to \exists  x\, \xi(x)$ & $\rightsquigarrow$ & $\exists  x\, (\delta \to \xi(x))$ & where $\delta\in \C_n^+$ and $x\notin \FV{\delta};$\\[2pt]
$(\forall \lor)_{n}$: & $\forall x \, \xi(x) \lor \delta$ &$\rightsquigarrow$ & $\forall x\, (\xi(x) \lor \delta)$ &where $\delta\in \C_n^+$ and $x\notin \FV{\delta};$\\[2pt]
$(\lor \forall)_{n}$: & $\delta \lor \forall x \, \xi(x)$ & $\rightsquigarrow$ & $\forall x\, (\delta \lor \xi(x))$  &where $\delta\in \C_n^+$ and $x\notin \FV{\delta}.$ \\
\end{tabular}
\end{center}
\end{defn}

\begin{remark}
    If $\vp \qn \psi$, then $\FV{\vp} = \FV{\psi}$.
\end{remark}

The following proposition asserts that the reflexivity and transitivity hold for $\qn$.   
\begin{prop}
\label{prop: reflexivity and transitivity of qn}
    The following hold$:$
    \begin{enumerate}
    \item 
    $\vp \qn \vp ;$
    \item 
    If $\vp \qn \psi$ and $\psi \qn \chi$, then $\vp \qn \chi$.
    \end{enumerate}    
\end{prop}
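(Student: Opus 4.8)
The statement to prove is Proposition~\ref{prop: reflexivity and transitivity of qn}, asserting reflexivity and transitivity of the relation $\qn$. Unwinding Definition~\ref{def: rules of qn}, $\vp \qn \psi$ means that $\psi$ is obtained from $\vp$ by a finite sequence of single-step substitutions, each applying one of the admissible rules to a subformula occurrence all of whose proper subformulas are already in prenex normal form. So $\qn$ is essentially the reflexive–transitive closure of a one-step rewriting relation, and both claims should be nearly immediate once that is made explicit. The plan is to first make the "finitely many steps" phrasing precise, then read off both properties.

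For reflexivity, I would simply observe that the empty sequence of substitutions witnesses $\vp \qn \vp$: the definition permits "finitely many times", and zero is a finite number. If the paper's convention is that at least one step is required (which the phrasing "in finite many times" leaves slightly ambiguous), then instead I would note that the variable-renaming rules $(\exists\text{-var})$ and $(\forall\text{-var})$ allow renaming a bound variable to itself — or, more safely, that any prenex transformation composed with its inverse where available returns the original; but the cleanest route is to treat the zero-step sequence as admissible, which is the natural reading and matches the intended use of $\qn$ as a preorder. Reflexivity then holds for every $\vp$, with no case analysis.

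For transitivity, suppose $\vp \qn \psi$ via a sequence $\vp \equiv \chi_0 \rightsquigarrow \chi_1 \rightsquigarrow \cdots \rightsquigarrow \chi_p \equiv \psi$ of admissible single steps, and $\psi \qn \chi$ via $\psi \equiv \eta_0 \rightsquigarrow \cdots \rightsquigarrow \eta_q \equiv \chi$. Concatenating these yields a sequence $\vp \equiv \chi_0 \rightsquigarrow \cdots \rightsquigarrow \chi_p \equiv \eta_0 \rightsquigarrow \cdots \rightsquigarrow \eta_q \equiv \chi$ of length $p+q$, each step of which is one of the admissible substitutions applied to a subformula occurrence whose proper subformulas are in prenex normal form — because each step in the concatenation was already such a step in one of the two given derivations, and the admissibility condition on a single step depends only on the formula it is applied to, not on the history. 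Hence $\vp \qn \chi$. This is a routine induction on $q$ (or just a direct concatenation argument), the only point worth spelling out being that the side condition "proper subformulas in prenex normal form" and the membership conditions $\xi(x)\in\U_n^+$, $\delta\in\C_n^+$ attached to the restricted rules are local to each step and thus preserved under concatenation.

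I do not expect any genuine obstacle here; the proposition is a bookkeeping lemma whose only subtlety is pinning down the precise meaning of the informal phrase "in finite many times" in Definition~\ref{def: rules of qn} and confirming that zero steps count. Once that is fixed, both parts are one-line arguments. Accordingly I would present the proof very briefly: define the $p$-step relation, note $\qn$ is its union over all $p\geq 0$, observe $p=0$ gives reflexivity, and observe that summing the step counts gives transitivity.
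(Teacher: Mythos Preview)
Your proposal is correct and matches the paper's approach: the paper simply writes ``Trivial by the definition of $\qn$,'' and your unwinding of the definition into finite sequences of one-step rewrites---with the empty sequence witnessing reflexivity and concatenation witnessing transitivity---is exactly the content behind that one line. Your only added care, checking that the side conditions on each rule are local to the step and hence survive concatenation, is a fair point to note but does not diverge from the paper's intended reading.
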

\begin{proof}
    Trivial by the definition of $\qn$.
\end{proof}

\begin{prop}
\label{prop: qk -> qk+k'}
    If $\vp \qn \psi$, then $\vp {\,\, \rightsquigarrow^{*}_{n+n'}\,\,} \psi$.
    \end{prop}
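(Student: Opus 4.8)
The plan is to reduce the claim to the single-step monotonicity statement that $\vp \rightsquigarrow^{*}_{m} \psi$ implies $\vp \rightsquigarrow^{*}_{m+1} \psi$ for every $m$; the proposition then follows by iterating this fact $n'$ times, starting from $m=n$ (the case $n'=0$ being vacuous). To establish the single-step statement I would argue that the very sequence of rewriting steps witnessing $\vp \rightsquigarrow^{*}_{m} \psi$ is itself a legal $\rightsquigarrow^{*}_{m+1}$-derivation. Since the structural side condition in Definition~\ref{def: rules of qn} — that the rewritten occurrence has all its proper subformulas in prenex normal form — does not refer to the level, it suffices to check, rule by rule, that each legal level-$m$ application of a rule is also a legal level-$(m+1)$ application.

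First I would record the two inclusions $\U^{+}_{m} \subseteq \U^{+}_{m+1}$ and $\C^{+}_{m} \subseteq \C^{+}_{m+1}$, both of which are immediate consequences of results already proved. By Proposition~\ref{prop: k>=n => Ek+=Enk and Uk+=Unk} (applied with $k=m$ and with $k=m+1$) we have $\U^{+}_{m} = \R^{m}_{m}$ and $\U^{+}_{m+1} = \R^{m+1}_{m+1}$, while $\R^{m}_{m} \subseteq \R^{m+1}_{m} \subseteq \R^{m+1}_{m+1}$ by Proposition~\ref{prop: Simga_k subset iE_k and Pi_k subset iU_k}\eqref{item: Ukn is a subset of Ukn+1} and Remark~\ref{rem: Ekn and Ukn are subclasses of Ek+1n and Uk+1n}; this gives the first inclusion. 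For the second, recall $\C^{+}_{m} = \E^{+}_{m} \cup \U^{+}_{m}$, so combining Remark~\ref{rem: classes C+} with Propositions~\ref{prop: k>=n => Ek+=Enk and Uk+=Unk} and~\ref{prop: Simga_k subset iE_k and Pi_k subset iU_k} and Remark~\ref{rem: Ekn and Ukn are subclasses of Ek+1n and Uk+1n} yields $\C^{+}_{m} = \D^{m}_{m} = \J^{m}_{m} \cup \R^{m}_{m} \subseteq \J^{m+1}_{m} \cup \R^{m+1}_{m} = \D^{m+1}_{m} \subseteq \D^{m+1}_{m+1} = \C^{+}_{m+1}$.

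Then I would go through the rules of $\rightsquigarrow^{*}_{m}$ from Definition~\ref{def: rules of qn}. The unrestricted rules $(\exists \to)$, $(\to \forall)$, $(\exists \land)$, $(\forall \land)$, $(\land \exists)$, $(\land \forall)$, $(\exists \lor)$, $(\lor \exists)$, $(\exists\text{-}var)$ and $(\forall\text{-}var)$ belong to the rule set of $\rightsquigarrow^{*}_{m'}$ for every $m'$, so any such step is at once a legal step of $\rightsquigarrow^{*}_{m+1}$. For a step by $(\to \exists)_{m}$, $(\forall \lor)_{m}$ or $(\lor \forall)_{m}$, its side formula $\delta$ lies in $\C^{+}_{m} \subseteq \C^{+}_{m+1}$, so the same rewriting is a legal application of $(\to \exists)_{m+1}$, $(\forall \lor)_{m+1}$ or $(\lor \forall)_{m+1}$. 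Finally, a step by $(\forall \to)_{m}$ occurs only when $m \neq 0$, in which case its active formula $\xi(x)$ lies in $\U^{+}_{m} \subseteq \U^{+}_{m+1}$; moreover $m+1 \neq 0$, so $(\forall \to)_{m+1}$ is indeed present in the rule set of $\rightsquigarrow^{*}_{m+1}$ and the step is a legal application of it. This exhausts all the rules, which proves the single-step statement and hence the proposition.

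The argument is essentially bookkeeping, and I expect the only genuinely substantive point to be the class monotonicity $\U^{+}_{m} \subseteq \U^{+}_{m+1}$ and $\C^{+}_{m} \subseteq \C^{+}_{m+1}$, which is why I would isolate it at the outset. The asymmetric treatment of $(\forall \to)$ at level $0$ causes no difficulty, precisely because no rule is ever lost when passing from level $m$ to level $m+1$, and a $(\forall \to)_{m}$-step already presupposes $m \neq 0$, hence $m+1 \neq 0$.
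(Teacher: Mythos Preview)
Your argument is correct and is essentially the paper's proof spelled out in detail: the paper simply records that $\U_n^{+}\subset \U_{n+n'}^{+}$ and $\C_n^{+}\subset \C_{n+n'}^{+}$, which immediately makes every level-$n$ rule a legal level-$(n+n')$ rule. Your derivation of these inclusions via the identifications $\U_m^{+}=\R^{m}_{m}$ and $\C_m^{+}=\D^{m}_{m}$ is valid but a detour, since the inclusions follow directly from the cumulative definitions of $\E_k^{+}$ and $\U_k^{+}$.
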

\begin{proof}
Trivial since $\U_n^+ \subset \U_{n+n'}^+$ and $\C_n^+ \subset \C_{n+n'}^+$.
\end{proof}

\begin{prop}
\label{lem: subformula app}
 If $\vp$ is a subformula of $\xi$ and $\vp \qn \psi$, then $\xi \qn \xi \br{\psi /\vp}$.
\end{prop}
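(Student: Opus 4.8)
The plan is to \emph{replay}, inside $\xi$, the sequence of transformations that witnesses $\vp \qn \psi$. By Definition~\ref{def: rules of qn}, $\vp \qn \psi$ means that $\psi$ is obtained from $\vp$ by finitely many single applications of the listed rules, each applied to a subformula occurrence whose proper subformulas are already in prenex normal form and which meets the side conditions of the rule used (the local variable conditions $x\notin\FV{\delta}$, the freshness condition in $(\exists\text{-}var)$ and $(\forall\text{-}var)$, and the membership conditions $\delta\in\C_n^+$, $\xi(x)\in\U_n^+$ attached to the restricted rules). The crucial observation is that every one of these conditions refers only to the transformed occurrence and its own proper subformulas, never to the surrounding context; hence, since each subformula occurrence of $\vp$ is also a subformula occurrence of $\xi$ (lying inside the distinguished occurrence of $\vp$ in $\xi$), each such step can be carried out verbatim inside $\xi$.

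Formally, I would argue by induction on the number $m$ of single-step rule applications in a fixed witnessing derivation of $\vp\qn\psi$, with the statement taken universally quantified over $\xi$, $\vp$ and $\psi$. If $m=0$, then $\psi\equiv\vp$, so $\xi\br{\psi/\vp}\equiv\xi$ and $\xi\qn\xi\br{\psi/\vp}$ by the reflexivity part of Proposition~\ref{prop: reflexivity and transitivity of qn}. If $m\geq 1$, split the derivation as a first single rule application $\vp\DT\vp_1$, in which some rule is applied to an occurrence $\chi$ of $\vp$ yielding $\vp_1\equiv\vp\br{\chi'/\chi}$, followed by a derivation $\vp_1\qn\psi$ with $m-1$ steps (which, by the definition of $\qn$, touches only subformula occurrences of $\vp_1$). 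Since $\chi$ occurs inside the distinguished occurrence of $\vp$ in $\xi$, the very same rule applies to $\chi$ inside $\xi$, so $\xi\qn\xi\br{\chi'/\chi}$, and by the choice of occurrences $\xi\br{\chi'/\chi}\equiv\xi\br{\vp_1/\vp}$. Now $\vp_1$ is a subformula occurrence of $\xi\br{\vp_1/\vp}$ and $\vp_1\qn\psi$ has $m-1$ steps, so the induction hypothesis gives $\xi\br{\vp_1/\vp}\qn\bigl(\xi\br{\vp_1/\vp}\bigr)\br{\psi/\vp_1}\equiv\xi\br{\psi/\vp}$. Combining the two derivations via the transitivity part of Proposition~\ref{prop: reflexivity and transitivity of qn} yields $\xi\qn\xi\br{\psi/\vp}$.

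The conceptual content is light; the real work is the bookkeeping with the replacement operation $\br{\cdot/\cdot}$ — making rigorous that a subformula occurrence of $\vp$ determines an unambiguous subformula occurrence of $\xi$, that applying a rule to it inside $\xi$ indeed produces $\xi\br{\vp_1/\vp}$, and that $\bigl(\xi\br{\vp_1/\vp}\bigr)\br{\psi/\vp_1}$ is literally $\xi\br{\psi/\vp}$ — together with a rule-by-rule confirmation that no side condition can be spoiled by the ambient $\xi$ (in particular that the freshness requirement of the renaming rules and the class-membership requirements involving $\C_n^+$ and $\U_n^+$ are insensitive to context). I expect this last, purely routine, check to be the main obstacle. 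An alternative organization avoids occurrence tracking altogether: write $\xi\equiv C[\vp]$ for a one-hole context $C$ and prove $C[\vp]\qn C[\psi]$ by induction on $C$ (the hole, a quantifier prefix $\forall x\,C'$ or $\exists x\,C'$, or a binary combination $C'\circ\theta$ or $\theta\circ C'$ with $\circ\in\{\land,\lor,\to\}$), using that $\qn$ is preserved by each one-hole context former, which is again immediate from the locality of the rules.
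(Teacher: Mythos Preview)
Your proof is correct and follows essentially the same approach as the paper's: induction on the number of rule applications witnessing $\vp\qn\psi$, invoking reflexivity and transitivity of $\qn$ (Proposition~\ref{prop: reflexivity and transitivity of qn}) and the locality of all side conditions. The only cosmetic difference is that the paper splits off the \emph{last} step of the derivation rather than the first, applying the induction hypothesis to $\vp\qn\psi'$ with $m$ steps and then handling the single final step $\psi'\qn\psi$ directly inside $\xi\br{\psi'/\vp}$.
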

\begin{proof}
    Fix $n$.
    We show our assertion by induction on the number of the application of the rules in $\vp \qn\psi$.
 If  $\vp \qn\psi$ with no application of the rules, then $\psi$ is $\vp$, and $\xi\br{\psi /\vp}$ is $\xi$.
 Therefore $\xi \qn \xi\br{\psi /\vp}$ by the reflexivity of $\qn$.

For the induction step, let $\vp \qn \psi$ with $m+1$ applications of the rules of $\qn$.
In addition, assume $\vp \qn\psi'$ with $m$ applications of the rules and $\psi' \qn \psi$ with $1$ application of the rules.
Then, by the induction hypothesis, we have $\xi \qn \xi\br{\psi'/\vp}$.
Since a subformula of $\psi'$ whose proper subformulas are in prenex normal form is also such a subformula of $\xi\br{\psi'/\vp}$,
we have $ \xi\br{\psi'/\vp} \qn \xi\br{\psi /\vp} $ by applying the rule applied for $\psi' \qn \psi$.
By the transitivity of $\qn$, we have $\xi \qn \xi\br{\psi /\vp} $.
\end{proof}

Propositions \ref{prop: reflexivity and transitivity of qn}, \ref{prop: qk -> qk+k'} and \ref{lem: subformula app} are frequently used without mentioning in the following proofs.

\begin{lem}
\label{lem: vp1 land vp2 in Sigmak, Pi_k}
    \begin{enumerate}
        \item 
        If $\vp_1, \vp_2 \in \Sigma_{k+1}^+$, then there exists $\psi\in \Sigma_{k+1}^+$ such that $(\vp_1 \land \vp_2) \qz \psi$.
               \item 
        If $\vp_1, \vp_2 \in \Pi_{k+1}^+$, then there exists $\psi\in \Pi_{k+1}^+$ such that $(\vp_1 \land \vp_2) \qz \psi$. 
    \end{enumerate}
\end{lem}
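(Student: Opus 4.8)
The plan is to prove items~1 and~2 simultaneously by induction on $k$. The guiding observation is that prenex-normalizing a conjunction never calls any restricted rule: the only transformations ever needed are $(\exists \land)$, $(\forall \land)$, $(\land \exists)$, $(\land \forall)$ together with the renamings $(\exists\text{-}var)$ and $(\forall\text{-}var)$, all of which are available in $\qz$ without side conditions. Hence the semi-classical restriction of $\qz$ is irrelevant here, and the whole content is to keep the quantifier complexity of the extracted prefix under control; in particular the statement holds already for the weakest relation $\qz$.

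For the main case of item~1, suppose first that $\vp_1,\vp_2\in\Sigma_{k+1}$, say $\vp_i\equiv\exists\ol{x}_i\,\alpha_i$ with $\alpha_i\in\Pi_k$. Using $(\exists\text{-}var)$ we first rename the variables in $\ol{x}_1,\ol{x}_2$ so that they are pairwise distinct and disjoint from $\FV{\vp_1}\cup\FV{\vp_2}$; then repeated applications of $(\exists \land)$ and $(\land \exists)$ yield $(\vp_1\land\vp_2)\qz\exists\ol{x}_1\exists\ol{x}_2(\alpha_1\land\alpha_2)$, where $\exists\ol{x}_1\exists\ol{x}_2$ is a single $\exists$-block. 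Now $\alpha_1,\alpha_2\in\Pi_k\subseteq\Pi_k^+$ and the $\land$-occurrence $\alpha_1\land\alpha_2$ has both immediate subformulas in prenex normal form, so by the induction hypothesis (item~2 at parameter $k-1$; when $k=0$ the matrix $\alpha_1\land\alpha_2$ is already quantifier-free and reflexivity of $\qz$ applies) there is $\psi'\in\Pi_k^+$ with $(\alpha_1\land\alpha_2)\qz\psi'$. By Proposition~\ref{lem: subformula app} and transitivity, $(\vp_1\land\vp_2)\qz\exists\ol{x}_1\exists\ol{x}_2\,\psi'$, and prefixing the $\exists$-block to $\psi'\in\Pi_k^+$ keeps the alternation depth at most $k+1$ with outermost quantifier $\exists$, so $\exists\ol{x}_1\exists\ol{x}_2\,\psi'\in\Sigma_{k+1}^+$.

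The remaining cases of item~1 are variations on this. A conjunct of $\Sigma_{k+1}^+$ is either quantifier-free, or of the form $\exists\ol{x}\,\alpha$ with $\alpha$ prenex of alternation $\le k$, or of the form $\forall\ol{x}\,\beta$ with $\beta$ prenex of alternation $\le k-1$; no conjunct lies in $\Pi_{k+1}$. When the two conjuncts share a leading quantifier I pull both leading blocks and recurse on the two matrices, which then share the opposite leading quantifier, via the appropriate item at a strictly smaller parameter; when they have opposite leading quantifiers I pull only one leading block, after which the stripped conjunct and the untouched conjunct share a leading quantifier; when one conjunct is quantifier-free I pull the whole prefix of the other over it and recurse on the matrix. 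Tracking the alternation depths shows that in every case the resulting prenex formula has alternation $\le k+1$ and, reading off its leading quantifier, lands in $\Sigma_{k+1}^+$. Item~2 is entirely dual, with the roles of $\exists$ and $\forall$, of $\Sigma$ and $\Pi$, and of items~1 and~2 interchanged; the induction bottoms out at conjunctions of quantifier-free formulas, for which reflexivity of $\qz$ suffices.

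The main obstacle is organizational rather than conceptual: carrying out the case distinction on the two conjuncts' leading quantifiers and alternation depths, threading the variable-renamings through every extraction step so that no capture occurs and so that the bound variables produced inside the recursively obtained $\psi'$ stay disjoint from the already-extracted outer blocks, and checking in each case that the merged prefix does not exceed $k+1$ alternations. No single step is difficult.
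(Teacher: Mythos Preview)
Your proposal is correct and follows essentially the same route as the paper: simultaneous induction on $k$, pull the leading quantifier blocks over the conjunction using $(\exists\land),(\land\exists),(\forall\land),(\land\forall)$ (and renaming), then apply the induction hypothesis to the remaining conjunction of lower-complexity prenex matrices. The paper's proof is terser---it simply writes each $\vp_i\in\Sigma_{k+1}^+$ as $\exists\ol{x}\,\vp_i'$ with $\vp_i'\in\Pi_k^+$ and invokes the dual item---whereas you spell out the mixed-leading-quantifier and quantifier-free subcases; but the underlying argument is the same.
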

\begin{proof}
    By simultaneous induction on $k$.
    The base step (the case of $k=0$) is trivial by the rules $(\exists \land )$, $(\forall \land )$, $(\land \exists )$ and $(\land \forall )$ of $\qz$.
    For the induction step, assume that $k\geq 1$ and the clauses hold for $k-1$.
    For the first clause for $k$, let $\vp_1$ be $\exists \ol{x}\, \vp'_1$ and $\vp_2$ be $\exists \ol{x} \,\vp'_2$ where $\vp'_1, \vp'_2 \in \Pi_k^+$.
    By the induction hypothesis, there exist $\psi' \in \Pi_k^+$ such that $(\vp'_1 \land \vp'_2)\qz \psi'$.
    By the rules $(\exists \land)$ and $(\land \exists)$, we have
    \[
\vp_1 \land \vp_2 \qz \exists \ol{x} \exists \ol{y} (\vp'_1 \land \vp'_2) \qz \exists \ol{x}\exists \ol{y}\, \psi',
    \]
    which is in $\Sigma_{k+1}^+$.
    The second clause for $k$ is shown similarly by using $(\forall \land)$ and $(\land \forall)$ instead of $(\exists \land)$ and $(\land \exists)$.
\end{proof}

\begin{lem}
\label{lem: vp1 lor vp2 in Sigma_k/Pi_k}
    \begin{enumerate}
        \item \label{item: lor Sigma}
        If $k \leq n$ and either $\vp_1 \in \Sigma_k^+\cup \Pi_k^+ $ and $\vp_2 \in \Sigma_{k+i+1}^+$, or $\vp_1 \in \Sigma_{k+i+1}^+$ and $\vp_2 \in \Sigma_k^+\cup \Pi_k^+ $, then there exists $\psi\in \Sigma_{k+i+1}^+$ such that $(\vp_1 \lor \vp_2 )\qn \psi$.
               \item \label{item: lor Pi}
                       If $k \leq n$ and either $\vp_1 \in \Sigma_k^+\cup \Pi_k^+ $ and $\vp_2 \in \Pi_{k+i+1}^+$, or $\vp_1 \in \Pi_{k+i+1}^+$ and $\vp_2 \in \Sigma_k^+\cup \Pi_k^+ $, then there exists $\psi\in \Pi_{k+i+1}^+$ such that $(\vp_1 \lor \vp_2 )\qn \psi$.
    \end{enumerate}
\end{lem}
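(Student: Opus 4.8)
The plan is to prove clauses~(1) and~(2) simultaneously, by induction on $k+i$. The hypothesis $k\le n$ enters through a single observation, recorded once at the start: by Propositions~\ref{prop: k>=n => Ek+=Enk and Uk+=Unk} and~\ref{prop: Simga_k subset iE_k and Pi_k subset iU_k}, $\Sigma_j^+\cup\Pi_j^+\subseteq\C_n^+$ for every $j\le n$, so any prenex formula with at most $n$ quantifier blocks lies in $\C_n^+$ and may therefore serve as the side formula $\delta$ in the restricted rules $(\forall\lor)_n$ and $(\lor\forall)_n$. Throughout the recursion I maintain the invariant that in every disjunction to which the induction hypothesis is applied, one disjunct --- a descendant of the ``small'' one --- has at most $n$ quantifier blocks; this is exactly what makes all the needed uses of $(\forall\lor)_n$ and $(\lor\forall)_n$ legal. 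By the symmetry between $(\exists\lor)/(\forall\lor)_n$ and $(\lor\exists)/(\lor\forall)_n$ it suffices to treat the configuration with the small disjunct on the left; the other three are obtained by interchanging left and right, and/or $\Sigma$ and $\Pi$.

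The base case $k=i=0$ is immediate: $\vp_1$ is quantifier-free and $\vp_2$ is quantifier-free or a single quantifier block over a quantifier-free matrix, and pulling that block outside the disjunction (by $(\lor\exists)$, or by $(\lor\forall)_n$ with the quantifier-free $\vp_1$ as $\delta$) lands in $\Sigma_1^+$, resp.\ $\Pi_1^+$.

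For the induction step fix $(k,i)$ with $k+i\ge 1$; I treat clause~(1), clause~(2) being symmetric. If $i\ge 1$, I reduce to the hypothesis at $(k,i-1)$: when $\vp_2\in\Sigma_{k+i}^+\cup\Pi_{k+i}^+$ one applies clause~(1) or~(2) at $(k,i-1)$ directly, since $\Sigma_{k+i}^+\cup\Pi_{k+i}^+\subseteq\Sigma_{k+i+1}^+$; and when $\vp_2\equiv\exists\ol x\,\vp_2'$ is a proper $\Sigma_{k+i+1}$-formula (so $\vp_2'\in\Pi_{k+i}$), $(\lor\exists)$ gives $\vp_1\lor\vp_2\qn\exists\ol x(\vp_1\lor\vp_2')$, clause~(2) at $(k,i-1)$ gives $\vp_1\lor\vp_2'\qn\psi'$ with $\psi'\in\Pi_{k+i}^+$, and $\exists\ol x\,\psi'$ --- the leading $\exists$ being merged with the leading block of $\psi'$ when that block is existential --- lies in $\Sigma_{k+i+1}^+$ by a short inspection of the possible shapes of $\psi'$. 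If $i=0$ (so $k\ge 1$), the two disjuncts are both prenex of quantifier depth $\le k+1$; I extract their leading quantifier blocks one at a time --- an existential block freely, a universal block over a side disjunct that the invariant keeps in $\C_n^+$ --- choosing the order so that each block of $\vp_1$ is extracted alongside a same-kind block of $\vp_2$ and merges with it, which reduces $\vp_1\lor\vp_2$ to a disjunction falling under the lemma at strictly smaller $k+i$; reattaching the peeled blocks to the prenex formula so obtained yields one in $\Sigma_{k+1}^+$, a short check on leading quantifiers confirming in particular that the $\Sigma$-target does not force the forbidden class $\Pi_{k+1}$.

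The principal obstacle is this alternation bookkeeping. At each step one must choose which disjunct's leading block to extract so that (a) every universal extraction has its side formula in $\C_n^+$ --- which forces the ``at most $n$ blocks'' invariant, and this is where $k\le n$ is used essentially --- and (b) the extracted block merges with a same-kind block of the other disjunct rather than lengthening the prefix, keeping the number of alternating blocks at most $k+i+1$ with the correct leading quantifier. These requirements, crossed with the case split over which disjunct is ``large'', the leading quantifier of each disjunct ($\exists$, $\forall$, or none), and whether the target class is of $\Sigma$- or $\Pi$-form, are what produce the many case distinctions the full argument involves.
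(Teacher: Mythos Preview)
Your reduction in the $i\ge1$ case is fine, but the $i=0$ case has a genuine gap. Your ``merge'' strategy --- pairing each block of $\vp_1$ with a same-kind block of $\vp_2$ and extracting both --- forces the two remainders down to the \emph{same} prenex level, and then the induction hypothesis is too weak. Concretely, take $\vp_1\in\Pi_k$ and $\vp_2\in\Sigma_{k+1}$ (genuine), with $i=0$. After extracting $\exists\ol{y}_1$ from $\vp_2$ and then the matching pair $\forall\ol{x}_1,\forall\ol{y}_2$, the remainders $\vp'_1,\vp''_2$ are both in $\Sigma_{k-1}$; the only applicable instance of the induction hypothesis is clause~(1) at $(k-1,0)$, which yields $\psi'\in\Sigma_k^+$, and reattaching the prefix $\exists\ol{y}_1\,\forall\ol{x}_1\ol{y}_2$ gives a formula of alternation depth $k+2$, not $k+1$. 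The ``short check on leading quantifiers'' cannot repair this: the overshoot is by a full block. The same overshoot occurs no matter how many merged pairs you peel, except when you peel all the way down to quantifier-free matrices --- and then you are not using the induction hypothesis at all, so the scheme collapses to a direct argument that your write-up does not supply.

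The paper avoids this by inducting on $k$ alone (for all $i$ simultaneously) and, crucially, by \emph{not} symmetrizing: in the inductive step it peels exactly one block from $\vp_1$ and leaves the corresponding block of $\vp_2$ in place, so that the two remainders sit at \emph{different} levels --- $\vp'_1\in\Sigma_{k-1}$ as the new small disjunct, the remainder of $\vp_2$ still in $\Pi_k^+$ or $\Sigma_k^+$ as the large one --- and the hypothesis at $k-1$ then returns $\psi'\in\Pi_k^+$ (resp.\ $\Sigma_k^+$), into which the lone peeled block from $\vp_1$ is absorbed. When $\vp_1\in\Pi_k$ and $\vp_2$ starts with $\exists$, the paper first pulls all of $\vp_2$'s leading $i+1$ blocks outside (keeping $\vp_1\in\C_n^+$ as the side formula throughout) before peeling $\vp_1$'s $\forall$. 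Inducting on $k$ rather than on $k+i$ also disposes of the degenerate subcase $\vp_1\in\Sigma_{k-1}^+\cup\Pi_{k-1}^+$ immediately via the hypothesis at $k-1$ with $i'=i+1$; under your induction on $k+i$ that instance has $(k-1)+(i+1)=k+i$ and is therefore unavailable.
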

\begin{proof}
Fix $n$.
By simultaneous induction on $k$, we show \eqref{item: lor Sigma} and \eqref{item: lor Pi} for all $k, i,\vp_1$ and $\vp_2$.

The base step is trivial by the rules $(\exists \lor ), (\forall \lor )_n, (\lor \exists), (\lor \forall)_n$.
For the induction step, assume $ k>0$ and that the assertions hold for $k-1$.

\eqref{item: lor Sigma}
Assume $k\leq n$, $\vp_1 \in \Sigma_k^+ \cup \Pi_k^+$ and $\vp_2 \in \Sigma_{k+i+1}^+$.
If $\vp_1\in \Sigma_{k-1}^+ \cup \Pi_{k-1}^+$, then we are done by the induction hypothesis.
Next, we reason in the case of $\vp_1 \equiv \exists \ol{x} \vp'_1 $ with $\vp'_1 \in \Pi_{k-1}$.
Let $\vp_2 \equiv \exists \ol{y} \vp'_2$ with $\vp'_2 \in \Pi_{k+i}^+$.
By the rules of $\qn$, we have $ \exists \ol{x} \vp'_1 \lor \exists \ol{y} \vp'_2 \qn \exists \ol{x}  \exists \ol{y} (\vp'_1 \lor  \vp'_2) $.
By the induction hypothesis, there exists $\psi' \in \Pi_{k+i}^+$ such that $  \vp'_1 \lor  \vp'_2 \qn \psi'$.
Therefore, we have $\vp_1 \lor \vp_2 \qn   \exists \ol{x} \exists \ol{y} \,\psi'$, which is in $\Sigma_{k+i+1}^+$.
Next, we reason in the case of $\vp_1 \equiv \forall \ol{x} \vp'_1 $ with $\vp'_1 \in \Sigma_{k-1}$.
Let  $\vp_2 \equiv \exists \ol{y_1} \forall \ol{y_2} \dots Q \ol{y_{i+1}}  \vp'_2$ with $\vp'_2 \in \Sigma_{k}^+ \cup \Pi_k^+$ (where $ Q \ol{y_{i+1}}$ is $ \forall \ol{y_{i+1}} $ or $ \exists \ol{y_{i+1}}$ depending on whether $i$ is even or odd).
Since $\forall \ol{x} \vp'_1\in \Pi_k^+ \subset \U_k^+ \subset \C_n^+$, by the rules of $\qn$, we have $\vp_1\lor \vp_2 \qn  \exists \ol{y_1} \forall \ol{y_2} \dots Q \ol{y_{i+1}}  (\forall \ol{x} \vp'_1 \lor \vp'_2)$.
If  $ Q \ol{y_{i+1}} \equiv \forall \ol{y_{i+1}} $ and $\vp'_2\in \Sigma_k^+$, by the induction hypothesis, there exists $\psi'\in \Sigma_k^+$ such that $ \vp'_1 \lor \vp'_2 \qn \psi'$, and hence, we have
\[
\vp_1\lor \vp_2 \qn  \exists \ol{y_1} \forall \ol{y_2} \dots \forall \ol{y_{i+1}} \forall \ol{x}( \vp'_1 \lor \vp'_2)\qn \exists \ol{y_1} \forall \ol{y_2} \dots \forall \ol{y_{i+1}} \forall \ol{x}\, \psi',
\]
which is in $\Sigma_{k+i+1}^+$.
If  $ Q \ol{y_{i+1}} \equiv \exists \ol{y_{i+1}} $ and $\vp'_2\in \Pi_k^+$, by the induction hypothesis, there exists $\psi'\in \Pi_k^+$ such that $ \vp'_1 \lor \vp'_2 \qn \psi'$, and hence, we have
\[
\vp_1\lor \vp_2 \qn  \exists \ol{y_1} \forall \ol{y_2} \dots \exists \ol{y_{i+1}} \forall \ol{x}( \vp'_1 \lor \vp'_2)\qn \exists \ol{y_1} \forall \ol{y_2} \dots \exists \ol{y_{i+1}} \forall \ol{x}\, \psi',
\]
which is in $\Sigma_{k+i+1}^+$.
The case of that $k\leq n$, $\vp_1 \in \Sigma_{k+i+1}^+$ and $\vp_2 \in \Sigma_k^+ \cup \Pi_k^+$ is verified similarly.

One can also show \eqref{item: lor Pi} by using the induction hypothesis in a similar manner.
\end{proof}

\begin{cor}
\label{cor: vp1 lor vp2 in Sigma_k+1}
    If $\vp_1, \vp_2\in \Sigma_{k+1}^+$ and $k\leq n$, then there exists $\psi \in \Sigma_{k+1}^+$ such that $\vp_1 \lor \vp_2 \qn \psi$.
\end{cor}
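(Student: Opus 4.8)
The plan is to argue by course-of-value induction on $k$, reducing each case either to Lemma~\ref{lem: vp1 lor vp2 in Sigma_k/Pi_k} or to the induction hypothesis at level $k-2$. Recall $\Sigma_{k+1}^{+} = \Sigma_{k+1} \cup \Sigma_{k}^{+} \cup \Pi_{k}^{+}$, so the first thing to do is to split on whether one of $\vp_1,\vp_2$ already lies in $\Sigma_{k}^{+} \cup \Pi_{k}^{+}$. If, say, $\vp_1 \in \Sigma_{k}^{+} \cup \Pi_{k}^{+}$, then since $k \leq n$ and $\vp_2 \in \Sigma_{k+1}^{+} = \Sigma_{k+0+1}^{+}$, Lemma~\ref{lem: vp1 lor vp2 in Sigma_k/Pi_k}\eqref{item: lor Sigma} with $i = 0$ produces a $\psi \in \Sigma_{k+1}^{+}$ with $\vp_1 \lor \vp_2 \qn \psi$; the subcase $\vp_2 \in \Sigma_{k}^{+} \cup \Pi_{k}^{+}$ is symmetric. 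The only remaining subcase is that both $\vp_1$ and $\vp_2$ are properly in $\Sigma_{k+1}$.

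For that subcase I would write $\vp_1 \equiv \exists \ol{x}\,\vp'_1$ and $\vp_2 \equiv \exists \ol{y}\,\vp'_2$ with $\vp'_1,\vp'_2 \in \Pi_{k}$, renaming by $(\exists\text{-var})$ and $(\forall\text{-var})$ so that $\ol{x}$, $\ol{y}$ are disjoint and avoid each other's bodies. The unrestricted rules $(\exists \lor)$ and $(\lor \exists)$ give $\vp_1 \lor \vp_2 \qn \exists \ol{x}\,\exists \ol{y}\,(\vp'_1 \lor \vp'_2)$, so it suffices to prenex $\vp'_1 \lor \vp'_2$. If $k = 0$ the bodies are already quantifier-free and we are done, so assume $k \geq 1$ and write $\vp'_1 \equiv \forall \ol{u}\,A$, $\vp'_2 \equiv \forall \ol{v}\,B$ with $A,B \in \Sigma_{k-1}$. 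Since $k \leq n$ we have $\vp'_2 \in \Pi_{k} \subseteq \U_{k}^{+} \subseteq \C^{+}_{n}$ and $A \in \Sigma_{k-1} \subseteq \E_{k-1}^{+} \subseteq \C^{+}_{n}$, so iterated applications of $(\forall \lor)_{n}$ followed by iterated applications of $(\lor \forall)_{n}$ are legitimate steps of $\qn$ and give $\vp'_1 \lor \vp'_2 \qn \forall \ol{u}\,\forall \ol{v}\,(A \lor B)$. Now either $k = 1$, in which case $A \lor B$ is quantifier-free, or $k \geq 2$, in which case $A,B \in \Sigma_{k-1} = \Sigma_{(k-2)+1}^{+}$ with $k-2 \leq n$, and the induction hypothesis supplies some $\sigma \in \Sigma_{k-1}^{+}$ with $A \lor B \qn \sigma$; in both cases the resulting body lies in $\Sigma_{k-1}^{+}$. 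Prefixing it by the $\forall$-block $\forall \ol{u}\,\forall \ol{v}$ --- merging, via $(\forall\text{-var})$ if needed, with any leading universal block --- lands in $\Pi_{k}^{+}$; prefixing that by $\exists \ol{x}\,\exists \ol{y}$, merging with any leading existential block, lands in $\Sigma_{k+1}^{+}$, which is the desired $\psi$.

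The routine part is the prefix bookkeeping: one has to check that an $\exists$-block in front of a $\Pi_{k}^{+}$-formula stays inside $\Sigma_{k+1}^{+}$ and a $\forall$-block in front of a $\Sigma_{k-1}^{+}$-formula stays inside $\Pi_{k}^{+}$, after absorbing adjacent quantifiers of the same kind. The one genuinely delicate point --- and the reason the statement is restricted to $\Sigma$ and to $k \leq n$ --- is the use of $(\forall \lor)_{n}$ and $(\lor \forall)_{n}$ in the both-proper subcase: these are admissible only because $k \leq n$ forces the side formulas $\vp'_2$ and $A$ into $\C^{+}_{n}$, so this is exactly where the semi-classical restriction is used. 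I expect verifying those side conditions and aligning the induction level ($k \mapsto k-2$: one step down for stripping the $\exists$-block, one more for the $\forall$-block) to be the only things requiring care --- no idea beyond Lemma~\ref{lem: vp1 lor vp2 in Sigma_k/Pi_k} is needed.
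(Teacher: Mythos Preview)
Your argument is correct, but it is considerably longer than the paper's. The paper's proof is a single application of Lemma~\ref{lem: vp1 lor vp2 in Sigma_k/Pi_k}\eqref{item: lor Sigma}: write $\vp_1 \equiv \exists \ol{x}\,\vp'_1$ with $\vp'_1 \in \Pi_k^+$ (allowing an empty $\exists$-block when $\vp_1$ already lies in $\Pi_k^+$), observe that $\vp'_1 \in \Sigma_k^+ \cup \Pi_k^+$ and $\vp_2 \in \Sigma_{k+1}^+$, and apply the lemma with $i=0$ to $\vp'_1 \lor \vp_2$; then $\vp_1 \lor \vp_2 \qn \exists \ol{x}(\vp'_1 \lor \vp_2) \qn \exists \ol{x}\,\psi \in \Sigma_{k+1}^+$. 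No induction and no second case are needed.

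The point you missed is that your own ``easy'' case already covers the ``both proper'' case after a single $\exists$-strip. Once you peel the leading existential block from $\vp_1$ alone, you have $\vp'_1 \in \Pi_k \subset \Sigma_k^+ \cup \Pi_k^+$, which is exactly the hypothesis that lets you invoke Lemma~\ref{lem: vp1 lor vp2 in Sigma_k/Pi_k}\eqref{item: lor Sigma} against the \emph{unmodified} $\vp_2 \in \Sigma_{k+1}^+$. Your detour through a course-of-values induction, the symmetric $\exists$-strip on $\vp_2$, and the further $\forall$-stripping via $(\forall\lor)_n$ and $(\lor\forall)_n$ simply re-derives part of what Lemma~\ref{lem: vp1 lor vp2 in Sigma_k/Pi_k} already packaged. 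The upside of your route is that it makes the use of the semi-classical rules $(\forall\lor)_n$, $(\lor\forall)_n$ explicit at this level; the downside is the extra bookkeeping (induction index $k\mapsto k-2$, variable disjointness, prefix-merging) that the paper avoids entirely.
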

\begin{proof}
    Let $\vp_1 \equiv \exists \ol{x} \vp'_1$ with $\vp'_1\in \Pi_k^+$.
    By Lemma \ref{lem: vp1 lor vp2 in Sigma_k/Pi_k}.\eqref{item: lor Sigma}, there exists $\psi\in \Sigma_{k+1}^+$ such that $\vp'_1 \lor \vp_2 \qn \psi$.
    Therefore, we have
    \[
\vp_1 \lor \vp_2 \qn \exists \ol{x} \,(\vp'_1 \lor \vp_2) \qn \exists \ol{x}\,\psi,
    \]
    which is in $\Sigma_{k+1}^+$.
 \end{proof}

\begin{cor}
\label{cor: vp1 lor vp2 in Sigma_k/Pi_k}
    \begin{enumerate}
        \item \label{item: lor Sigma k=n}
        If either $\vp_1 \in \Sigma_n^+\cup \Pi_n^+ $ and $\vp_2 \in \Sigma_{n+i+1}^+$, or $\vp_1 \in \Sigma_{n+i+1}^+$ and $\vp_2 \in \Sigma_n^+\cup \Pi_n^+ $, then there exists $\psi\in \Sigma_{n+i+1}^+$ such that $(\vp_1 \lor \vp_2 )\qn \psi$.
               \item \label{item: lor Pi k=n}
                       If either $\vp_1 \in \Sigma_n^+\cup \Pi_n^+ $ and $\vp_2 \in \Pi_{n+i+1}^+$, or $\vp_1 \in \Pi_{n+i+1}^+$ and $\vp_2 \in \Sigma_n^+\cup \Pi_n^+ $, then there exists $\psi\in \Pi_{n+i+1}^+$ such that $(\vp_1 \lor \vp_2 )\qn \psi$.
    \end{enumerate}
\end{cor}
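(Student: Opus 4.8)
The plan is to observe that Corollary~\ref{cor: vp1 lor vp2 in Sigma_k/Pi_k} is exactly the instance $k=n$ of Lemma~\ref{lem: vp1 lor vp2 in Sigma_k/Pi_k}. Concretely, I would substitute $k:=n$ into clause~\eqref{item: lor Sigma} of that lemma: the side condition ``$k\leq n$'' becomes ``$n\leq n$'', which holds trivially, while the remaining hypotheses ($\vp_1\in\Sigma_n^+\cup\Pi_n^+$ and $\vp_2\in\Sigma_{n+i+1}^+$, or the symmetric alternative) and the conclusion (the existence of $\psi\in\Sigma_{n+i+1}^+$ with $(\vp_1\lor\vp_2)\qn\psi$) coincide verbatim with clause~\eqref{item: lor Sigma k=n} here. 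The same substitution into clause~\eqref{item: lor Pi} of the lemma yields clause~\eqref{item: lor Pi k=n}. Hence both parts follow at once, without any induction or case analysis of their own.

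Since the statement is a direct specialization, there is essentially no obstacle to overcome: all the genuine work has already been done in the proof of Lemma~\ref{lem: vp1 lor vp2 in Sigma_k/Pi_k} — the case split on whether the ``low'' disjunct is of $\Sigma$- or $\Pi$-form, the bookkeeping of the quantifier prefix of the ``high'' disjunct, the repeated applications of $(\exists\lor)$, $(\forall\lor)_n$, $(\lor\exists)$ and $(\lor\forall)_n$, and the observation that a formula in $\Sigma_k^+\cup\Pi_k^+$ with $k\leq n$ lies in $\C_n^+$ so that the $n$-restricted rules are licensed. If one wished to avoid citing the lemma, one could instead re-run the very same simultaneous induction on $k$ with the parameter frozen at $k=n$, but this would merely reproduce that argument, so invoking the lemma is the economical route. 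The only point worth a sentence is the trivial verification that the hypothesis $k\leq n$ of the lemma is met with equality here, which is what makes the specialization legitimate.

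Accordingly, the proof will read simply: apply Lemma~\ref{lem: vp1 lor vp2 in Sigma_k/Pi_k} with $k=n$ to obtain clauses~\eqref{item: lor Sigma k=n} and~\eqref{item: lor Pi k=n} respectively.

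\begin{proof}
Both clauses are the instance $k=n$ of Lemma~\ref{lem: vp1 lor vp2 in Sigma_k/Pi_k}: the side condition $k\leq n$ there becomes $n\leq n$, which holds, and the remaining hypotheses and conclusions of clauses~\eqref{item: lor Sigma} and~\eqref{item: lor Pi} of that lemma then coincide with those of clauses~\eqref{item: lor Sigma k=n} and~\eqref{item: lor Pi k=n} here.
\end{proof}
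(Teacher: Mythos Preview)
Your proposal is correct and matches the paper's own proof, which simply reads ``Immediate from Lemma~\ref{lem: vp1 lor vp2 in Sigma_k/Pi_k}.'' Your observation that the corollary is precisely the instance $k=n$ of that lemma is exactly the intended argument.
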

\begin{proof}
    Immediate from Lemma \ref{lem: vp1 lor vp2 in Sigma_k/Pi_k}.
\end{proof}

\begin{lem}
\label{lem: vp1 to vp2 in Sigma_k/Pi_k}
    \begin{enumerate}
          \item \label{item: to Sigma}
        If $k \leq n$, $\vp_1 \in \Sigma_k^+\cup \Pi_k^+ $ and $\vp_2 \in \Sigma_{k+i+1}^+$, then there exists $\psi\in \Sigma_{k+i+1}^+$ such that $(\vp_1 \to \vp_2 )\qn \psi$.
               \item \label{item: to Pi}
                       If $k \leq n$, $\vp_1 \in \Sigma_{k+1}^+ $ and $\vp_2 \in \Pi_{k+i+1}^+$, then there exists $\psi\in \Pi_{k+i+1}^+$ such that $(\vp_1 \to \vp_2 )\qn \psi$.
    \end{enumerate}
    
\end{lem}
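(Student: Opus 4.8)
The plan is to prove clauses~\eqref{item: to Sigma} and~\eqref{item: to Pi} simultaneously by induction on $k$, in the style of the proof of Lemma~\ref{lem: vp1 lor vp2 in Sigma_k/Pi_k}. The new ingredient, compared with the $\lor$-case, is that forming an implication reverses the quantifiers of the antecedent: a leading $\exists$-block of $\vp_1$ is pulled out by the \emph{unrestricted} rule $(\exists\to)$ and becomes a $\forall$-block, whereas a leading $\forall$-block of $\vp_1$ is pulled out by the \emph{restricted} rule $(\forall\to)_{n}$ and becomes an $\exists$-block, so its side condition must be discharged. This reversal also explains the asymmetry of the two clauses: in~\eqref{item: to Sigma} the consequent $\vp_2\in\Sigma_{k+i+1}^{+}$ begins with an existential block, and a $\Pi_k$-shaped $\vp_1$ flips to a $\Sigma_k$-shape that merges with $\vp_2$ at the top, while a $\Sigma_k$-shaped $\vp_1$ flips to a $\Pi_k$-shape whose $k$ blocks are instead dovetailed into $\vp_2$'s prefix \emph{starting from its second block}, which fits because $\vp_2$ has at least $k+1$ blocks and the parities line up; in~\eqref{item: to Pi} the consequent begins with a universal block, and requiring $\vp_1\in\Sigma_{k+1}^{+}$ is exactly what makes its flip $\Pi_{k+1}$-shaped, so that it again dovetails into $\vp_2$'s prefix.

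For the base case $k=0$: in~\eqref{item: to Sigma} the antecedent is quantifier-free, so I would pull every quantifier of $\vp_2$ out through the implication — each $(\to\exists)_{n}$-step is legal because the antecedent remains quantifier-free, hence in $\C_n^{+}$ — reaching a formula with $\vp_2$'s own prefix, which lies in $\Sigma_{i+1}^{+}$. In~\eqref{item: to Pi}, $\vp_1$ is quantifier-free or of the form $\exists\ol{x}\,\QF{\vp_1}$, so I would first apply $(\exists\to)$ to reduce to a quantifier-free antecedent (the new block $\forall\ol{x}$ merging with $\vp_2$'s leading universal block, or remaining an extra block if $\vp_2$ begins with a short existential block) and then pull out $\vp_2$'s quantifiers as before, landing in $\Pi_{i+1}^{+}$.

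For the induction step I would assume both clauses for $k-1$, prove~\eqref{item: to Sigma} for $k$ first, then~\eqref{item: to Pi} for $k$, splitting on the leading quantifier block of $\vp_1$ (and on whether $\vp_2$ is genuinely of top level or lies in a strictly smaller prenex class, which adjusts the auxiliary index). When $\vp_1$ already lies in a lower class ($\Sigma_{k-1}^{+}\cup\Pi_{k-1}^{+}$ in~\eqref{item: to Sigma}, $\Sigma_{k}^{+}$ in~\eqref{item: to Pi}) the induction hypothesis for $k-1$ applies at once. When $\vp_1\equiv\exists\ol{x}\,\vp_1'$ with $\vp_1'\in\Pi_{k-1}$ (clause~\eqref{item: to Sigma}) I would extract $\vp_2$'s leading block, then $\vp_1$'s block by $(\exists\to)$, and close with the induction hypothesis~\eqref{item: to Pi} for $k-1$ applied to $\vp_1'\to(\text{rest of }\vp_2)$, using $\Pi_{k-1}\subseteq\Sigma_{k}^{+}$; when $\vp_1\equiv\forall\ol{x}\,\vp_1'$ with $\vp_1'\in\Sigma_{k-1}$ I would extract $\vp_1$'s block by $(\forall\to)_{n}$ and close with~\eqref{item: to Sigma} for $k-1$. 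For clause~\eqref{item: to Pi} the two remaining cases are $\vp_1\in\Pi_k$ and $\vp_1\equiv\exists\ol{x}\,\vp_1'$ with $\vp_1'\in\Pi_k$: in either one I would extract $\vp_2$'s leading universal block by $(\to\forall)$, peel $\vp_1$ down to a $\Sigma_{k-1}$-shaped antecedent by one application of $(\forall\to)_{n}$ (respectively by $(\exists\to)$ followed by $(\forall\to)_{n}$), and close with~\eqref{item: to Sigma} for $k-1$. Throughout, the subformulas being rewritten are tails of $\vp_1$ or $\vp_2$, hence already in prenex normal form, so Proposition~\ref{lem: subformula app} lifts the rewrites produced by the induction hypothesis to the whole formula, and bound variables are renamed by $(\exists\text{-var})$, $(\forall\text{-var})$ to prevent capture.

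The hard part is not any individual step but keeping control: the leading block of $\vp_1$ may be existential, universal, or absent; $\vp_2$ may be genuinely of level $k+i+1$ or lie in a smaller prenex class; and the parities of the various block lengths decide which adjacent blocks merge and hence the exact prenex class of the output, so one must constantly verify that the alternation count stays at $k+i+1$. The one recurring technical point is the discharge of the side conditions of $(\to\exists)_{n}$ and $(\forall\to)_{n}$: every antecedent handed to $(\to\exists)_{n}$ and every matrix handed to $(\forall\to)_{n}$ is a tail of $\vp_1$ or $\vp_2$, i.e.\ a prenex formula with at most $k$ (for~\eqref{item: to Pi}, at most $k+1$) quantifier blocks, existential-started in the former case and universal-started in the latter; since $\Sigma_j^{+}\cup\Pi_j^{+}\subseteq\C_j^{+}\subseteq\C_n^{+}$ and $\Pi_j^{+}\subseteq\U_j^{+}\subseteq\U_n^{+}$ for $j\le k\le n$, and since a universal-started prenex tail of at most $k$ blocks and any prenex tail of at most $k-1$ blocks both lie in $\Pi_k^{+}$, all these conditions hold — this is where $k\le n$ enters, and $n\ge1$ (required for $(\forall\to)_{n}$ to be available) holds because the cases that use it force $k\ge1$.
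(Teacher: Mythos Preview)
Your proposal is correct and takes essentially the same approach as the paper: simultaneous induction on $k$, with the base case pulling $\vp_2$'s quantifiers through the implication and the induction step peeling one block from $\vp_1$ (via $(\exists\to)$ or $(\forall\to)_n$) and/or one block from $\vp_2$ (via $(\to\forall)$ or $(\to\exists)_n$) before invoking the hypothesis at level $k-1$, discharging the side conditions of the restricted rules through $k\le n$. The paper organizes the case split for clause~\eqref{item: to Sigma} slightly differently---splitting only into $\vp_1\in\Sigma_k^{+}$ versus $\vp_1\in\Pi_k$ and always extracting $\vp_2$'s leading $\exists$-block first so that the hypothesis invoked is clause~\eqref{item: to Pi} for $k-1$ rather than clause~\eqref{item: to Sigma}---but this is a cosmetic difference.
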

\begin{proof}
Fix $n$.
By simultaneous induction on $k$, we show \eqref{item: to Sigma} and \eqref{item: to Pi} for all $k, i,\vp_1$ and $\vp_2$.

We first show \eqref{item: to Sigma} and \eqref{item: to Pi} in the case of $k=0$.
For \eqref{item: to Sigma}, assume $0\leq n$, $\vp_1 \in \Sigma_0^+\cup \Pi_0^+ $ and $\vp_2 \in \Sigma_{i+1}^+$.
Let $ \vp_2 \equiv \exists \ol{y_1} \forall \ol{y_2} \dots Q \ol{y_{i+1}} \vp'_2$ with $\vp'_2$ is quantifier-free.
Applying the rules of $\qn$ (even $\qz$), we have
\[
(\vp_1 \to \vp_2) \qn \exists \ol{y_1} \forall \ol{y_2} \dots Q \ol{y_{i+1}} (\vp_1 \to \vp'_2),
\]
which is in  $\Sigma_{i+1}^+$.
For \eqref{item: to Pi}, assume $0\leq n$, $\vp_1 \in \Sigma_1^+ $ and $\vp_2 \in \Pi_{i+1}^+$.
If $\vp_1 \in \Sigma_0^+\cup \Pi_0^+ $, as in the case of \eqref{item: to Sigma}, we have that there exists $\psi \in \Pi_{i+1}^+$ such that $(\vp_1 \to \vp_2 ) \qn \psi$.
Otherwise, there exists quantifier-free $\vp'_1 $ such that $\vp_1 \equiv \exists \ol{x}\, \vp'_1$.
By the rule $(\exists \to)$, we have $(\exists \ol{x} \vp'_1 \to \vp_2) \qn \forall \ol{x} (\vp'_1 \to \vp_2)$.
In addition, as in  the case of \eqref{item: to Sigma}, we have that there exists $\psi' \in \Pi_{i+1}^+$ such that $(\vp'_1 \to \vp_2 ) \qn \psi$.
Therefore, we have
\[
(\vp_1 \to \vp_2) \qn \forall \ol{x} (\vp'_1 \to \vp_2) \qn \forall \ol{x }\, \psi',
\]
which is in  $\Pi_{i+1}^+$.

For the induction step, assume $k>0$ and the assertions hold for $k-1$.
For \eqref{item: to Sigma}, assume $k\leq n$, $\vp_1 \in \Sigma_k^+\cup \Pi_k^+ $ and $\vp_2 \in \Sigma_{k+i+1}^+$.
Note $\Sigma_k^+\cup \Pi_k^+ = \Sigma_k^+\cup \Pi_k$.
First, we reason in the case of $\vp_1 \in \Sigma_k^+$.
Let $ \vp_2 \equiv \exists \ol{y}  \vp'_2$ with $\vp'_2\in \Pi_{k+i}^+$ 
Since $k-1 < k\leq n$, by \eqref{item: to Pi} of the induction hypothesis, there exists $\psi' \in \Pi_{k+i}^+$ such that $(\vp_1 \to \vp'_2) \qn \psi'$.
Since $\vp_1$ is in $\C_n^+$, using the rule $(\to \exists)_n$, we have
\[
(\vp_1 \to\vp_2) \qn \exists \ol{y} (\vp_1 \to \vp'_2) \qn  \exists \ol{y} \,\psi',
\]
which is in $\Sigma_{k+i+1}^+$.
Next, we reason in the case of $\vp_1 \equiv \forall \ol{x} \vp'_1$ with $\vp'_1 \in \Sigma_{k-1}$.
Let $ \vp_2 \equiv \exists \ol{y}  \vp'_2$ with $\vp'_2\in \Pi_{k+i}^+$.
By \eqref{item: to Pi} of the induction hypothesis, there exists $\psi' \in \Pi_{k+i}^+$ such that $(\vp'_1 \to \vp'_2) \qn \psi'$.
Since the proper subformulas of $\forall \ol{x} \vp'_1$ are in $\U_n^+$, using the rules $(\forall \to)_n$ and $(\to \exists)_n$, we have
\[
(\vp_1 \to\vp_2) \qn \exists \ol{x}  \exists \ol{y} (\vp'_1 \to \vp'_2) \qn  \exists \ol{x} \exists \ol{y} \,\psi',
\]
which is in $\Sigma_{k+i+1}^+$.

For \eqref{item: to Pi}, assume $k\leq n$, $\vp_1 \in \Sigma_{k+1}^+$ and $\vp_2 \in \Pi_{k+i+1}^+$.
Let $\vp_1 \equiv \exists \ol{x} \forall \ol{y} \vp'_1$ with $\vp'_1 \in \Sigma_{k-1}^+$ and $\vp_2 \equiv \forall \ol{z}  \vp'_2$ with $\vp'_2\in \Sigma_{k+i}^+$.
By \eqref{item: to Sigma} of the induction hypothesis, there exists $\psi' \in \Sigma_{k+i}^+$ such that $(\vp'_1 \to \vp'_2) \qn \psi'$.
Since the proper subformulas of $\forall \ol{y} \vp'_1$ are in $\U_n^+$, using the rules $(\exists \to)$ and $(\to \forall)$ and $(\forall \to)_n$, we have
\[
(\vp_1 \to\vp_2) \qn \forall \ol{x} \forall \ol{z} (\forall \ol{y} \vp'_1 \to \vp'_2)\qn \forall \ol{x} \forall \ol{z} \exists \ol{y} ( \vp'_1 \to \vp'_2) \qn \forall \ol{x} \forall \ol{z} \exists \ol{y}\, \psi',
\]
which is in $\Pi_{k+i+1}^+$.
\end{proof}

\begin{cor}
\label{cor: vp1 to vp2 in Sigma_k/Pi_k}
    \begin{enumerate}
          \item \label{item: to Sigma k=n}
        If $\vp_1 \in \Sigma_n^+\cup \Pi_n^+ $ and $\vp_2 \in \Sigma_{n+i+1}^+$, then there exists $\psi\in \Sigma_{n+i+1}^+$ such that $(\vp_1 \to \vp_2 )\qn \psi$.
               \item \label{item: to Pi k=n}
                       If $\vp_1 \in \Sigma_{n+1}^+ $ and $\vp_2 \in \Pi_{n+i+1}^+$, then there exists $\psi\in \Pi_{n+i+1}^+$ such that $(\vp_1 \to \vp_2 )\qn \psi$.
    \end{enumerate}
    
\end{cor}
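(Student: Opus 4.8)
The plan is to derive both clauses as the boundary case $k=n$ of Lemma~\ref{lem: vp1 to vp2 in Sigma_k/Pi_k}. That lemma is stated under the hypothesis $k\leq n$, which includes the equality $k=n$, so no genuinely new argument is needed; the corollary is simply the instance where the two parameters coincide.

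For clause~\eqref{item: to Sigma k=n}, I would apply clause~\eqref{item: to Sigma} of Lemma~\ref{lem: vp1 to vp2 in Sigma_k/Pi_k} with $k$ replaced by $n$. The side condition becomes $n\leq n$, which holds trivially, and the hypotheses $\vp_1\in\Sigma_n^+\cup\Pi_n^+$ and $\vp_2\in\Sigma_{n+i+1}^+$ are exactly those of the corollary once we observe that $\Sigma_{k+i+1}^+$ turns into $\Sigma_{n+i+1}^+$ under this substitution. The lemma then produces a $\psi\in\Sigma_{n+i+1}^+$ with $(\vp_1\to\vp_2)\qn\psi$, which is the desired conclusion.

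For clause~\eqref{item: to Pi k=n}, I would likewise invoke clause~\eqref{item: to Pi} of Lemma~\ref{lem: vp1 to vp2 in Sigma_k/Pi_k} with $k:=n$. Again $n\leq n$, and the hypotheses $\vp_1\in\Sigma_{n+1}^+$ and $\vp_2\in\Pi_{n+i+1}^+$ match the lemma's requirements $\vp_1\in\Sigma_{k+1}^+$, $\vp_2\in\Pi_{k+i+1}^+$ after setting $k=n$; the output $\psi\in\Pi_{n+i+1}^+$ follows immediately.

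I do not expect any real obstacle here: the statement is a direct specialization, in the same spirit as the way Corollary~\ref{cor: vp1 lor vp2 in Sigma_k/Pi_k} specializes Lemma~\ref{lem: vp1 lor vp2 in Sigma_k/Pi_k}. The only thing to confirm is the purely bookkeeping point that the index shift in the corollary ($n+i+1$) agrees with that of the lemma ($k+i+1$) once $k=n$, which it plainly does. Consequently the proof amounts to a one-line appeal to Lemma~\ref{lem: vp1 to vp2 in Sigma_k/Pi_k}.
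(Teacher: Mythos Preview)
Your proposal is correct and matches the paper's approach exactly: the paper simply writes ``Immediate from Lemma~\ref{lem: vp1 to vp2 in Sigma_k/Pi_k}'', and your argument spells out precisely that specialization $k:=n$.
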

\begin{proof}
    Immediate from Lemma \ref{lem: vp1 to vp2 in Sigma_k/Pi_k}.
\end{proof}

\begin{thm}
\label{thm: semi-classical PN}
    \begin{enumerate}
        \item 
        \label{item: Enk qn Sk}
        If $\vp\in \J_k^n$, then there exists $\psi\in \Sigma_k^+$ such that $\vp \qn \psi$.
        \item 
                \label{item: Unk qn Pk}
        If $\vp\in \R_k^n$, then there exists $\psi\in \Pi_k^+$ such that $\vp \qn \psi$.        
    \end{enumerate}
\end{thm}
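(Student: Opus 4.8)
The plan is to fix $n$ and prove the two statements simultaneously by course-of-value induction on $k$, with an inner induction on the structure of $\vp$ that follows the generating clauses of Definition \ref{def: iEiU}. The base case $k=0$ is immediate: $\J^n_0=\R^n_0=\F_0$ is the set of quantifier-free formulas, which already lies in $\Sigma^+_0=\Pi^+_0$, so one takes $\psi\equiv\vp$ by reflexivity of $\qn$. For the step from $k$ to $k+1$ I would split according to the three cases $n>k$, $n=k$, $n<k$ of Definition \ref{def: iEiU} and run through the generating clauses in each. A formula already in $\D^n_k$ is handled by the outer induction hypothesis (applied at $k'=k$), which yields a prenex $\psi\in\Sigma^+_k\cup\Pi^+_k\subseteq\Sigma^+_{k+1}\cap\Pi^+_{k+1}$, settling both the $\J$- and the $\R$-alternative at once; prime formulas are the special case $\psi\equiv\vp\in\Sigma^+_0$.

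For a compound $\vp$, the recipe is: apply the inner induction hypothesis to the immediate subformulas (and, when a clause refers to a class with a strictly smaller subscript, such as $\J^n_{n+1}$ or $\D^n_n$ in case $n<k$, the outer induction hypothesis, which is available since then $n+1\le k$) to obtain their prenex transforms; push these transforms up through the connective using Proposition \ref{lem: subformula app}; and then combine the two resulting prenex formulas by the matching combination result — Lemma \ref{lem: vp1 land vp2 in Sigma_k, Pi_k} for $\land$, Lemma \ref{lem: vp1 lor vp2 in Sigma_k/Pi_k} together with Corollaries \ref{cor: vp1 lor vp2 in Sigma_k+1} and \ref{cor: vp1 lor vp2 in Sigma_k/Pi_k} for $\lor$, and Lemma \ref{lem: vp1 to vp2 in Sigma_k/Pi_k} together with Corollary \ref{cor: vp1 to vp2 in Sigma_k/Pi_k} for $\to$; the $\exists x$ (resp.\ $\forall x$) clause needs only that $\Sigma^+_{k+1}$ (resp.\ $\Pi^+_{k+1}$) absorbs a prepended block of existential (resp.\ universal) quantifiers merged with an adjacent block of the same kind. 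The bookkeeping is to keep track of which prenex level each subformula transform lands in and to feed the pair into the combination result at the correct parameter $i$: for instance in case $n<k$ a component coming from $\J^n_{n+1}$ transforms only into $\Sigma^+_{n+1}$, not into $\Sigma^+_{k+1}$, and then the slack ``$+i+1$'' built into Lemmas \ref{lem: vp1 lor vp2 in Sigma_k/Pi_k} and \ref{lem: vp1 to vp2 in Sigma_k/Pi_k} (with $i=k-n$) is exactly what absorbs the gap; in cases $n=k$ and $n>k$ one uses $i=0$.

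The cases I expect to cost the most work are the few clauses where the combination results do not apply verbatim because a subformula transform lands one prenex level above what the lemma accepts, or at the top level for both arguments: the clauses $E\lor E_1$ and $E_1\lor E$ of $\J^n_{k+1}$ when $n<k$ (the $\J^n_{n+1}$-component becomes a $\Sigma^+_{n+1}$-formula, whereas Lemma \ref{lem: vp1 lor vp2 in Sigma_k/Pi_k}.\eqref{item: lor Sigma} wants a $\Sigma^+_n\cup\Pi^+_n$ argument), the clause $U\to E$ of $\J^n_{k+1}$ when $n>k$ (the $\R^n_{k+1}$-component becomes a $\Pi^+_{k+1}$-formula, rather than the $\Sigma^+_k\cup\Pi^+_k$ antecedent required by Lemma \ref{lem: vp1 to vp2 in Sigma_k/Pi_k}.\eqref{item: to Sigma}), and the clause $U\lor U'$ of $\R^n_{k+1}$ when $n>k$ (both transforms may be genuine $\Pi_{k+1}$-formulas, and there is no $\Pi$-counterpart of Corollary \ref{cor: vp1 lor vp2 in Sigma_k+1}). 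In each of these I would exploit the decomposition $\Sigma^+_{m+1}=\Sigma_{m+1}\cup\Sigma^+_m\cup\Pi^+_m$ and its $\Pi$-dual: if the problematic transform already lies in the lower part, the relevant lemma applies at once; otherwise it is a genuine $\exists$- or $\forall$-prefixed prenex formula, so I would first strip its leading quantifier block using the appropriate prenex rule — $(\lor\exists)$/$(\exists\lor)$ for the $\lor$-clauses on the $\J$-side, and the restricted rules $(\forall\to)_n$, $(\forall\lor)_n$, $(\lor\forall)_n$ for the others — then apply the combination result to the now-small residue, and finally reattach the block, again using that $\Sigma^+_{k+1}$ and $\Pi^+_{k+1}$ absorb prepended quantifier blocks. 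The side conditions that must then be verified are that those restricted rules are licensed, i.e.\ the relevant $\delta$ or $\xi$ lies in $\C^+_n$ resp.\ $\U^+_n$ — which holds because everything in sight has prenex degree at most $n$, so the needed inclusions $\Sigma_k\subseteq\U^+_n$ and $\Pi_{k+1}\subseteq\U^+_n\subseteq\C^+_n$ (valid since $k<n$) follow from the known relations among these classes (cf.\ Remark \ref{rem: Ekn and Ukn are subclasses of Ek+1n and Uk+1n} and \cite{FK24}) — and that each rewriting respects the proviso of $\qn$ that the proper subformulas of the rewritten occurrence already be prenex, which is automatic because we always rewrite from the inside out.
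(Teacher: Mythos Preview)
Your proposal is correct and follows essentially the same route as the paper: the same outer course-of-value induction on $k$, the same inner structural induction, the same combination lemmas (Lemma~\ref{lem: vp1 land vp2 in Sigmak, Pi_k}, Lemma~\ref{lem: vp1 lor vp2 in Sigma_k/Pi_k} and its corollaries, Lemma~\ref{lem: vp1 to vp2 in Sigma_k/Pi_k} and its corollary), and the same quantifier-stripping maneuvers via $(\forall\to)_n$, $(\forall\lor)_n$, $(\lor\forall)_n$, $(\lor\exists)$, $(\exists\lor)$ for the delicate cases. The only organizational differences are that you split first by the trichotomy $n>k$, $n=k$, $n<k$ of Definition~\ref{def: iEiU} and argue directly from the generating clauses, whereas the paper splits by connective first and mostly routes through the inversion Lemmas~\ref{lem: land Enk and Unk}--\ref{lem: forall Enk and Unk} to recover the class of each immediate subformula (appealing to the construction only for $\vp_1\lor\vp_2\in\R^n_{k+1}$ with $k\ge n$ and $\vp_1\to\vp_2\in\J^n_{k+1}$ with $k\ge n$); the paper's own Remark after the proof notes these two ways of unpacking class membership are interchangeable.
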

\begin{proof}
Fix $n$.
By course-of-value induction on $k$, we show that for all $k$ and $\vp$, \eqref{item: Enk qn Sk} and \eqref{item: Unk qn Pk} hold.
The base step is trivial.
For the induction step, assume that \eqref{item: Enk qn Sk} and \eqref{item: Unk qn Pk} hold for all $\vp$ up to $k$, and show the assertion for $k+1$ by induction on the structure of formulas.
For a prime $\vp$, since $\vp \qz \vp$ and $\vp \in \Sigma_{k+1}^+ \cap \Pi_{k+1}^+$, we are done.
Suppose that the assertion holds for $\vp_1$ and $\vp_2$.

Suppose $ \vp_1 \land \vp_2 \in \J_{k+1}^n$.
By Lemma \ref{lem: land Enk and Unk}, we have $\vp_1, \vp_2 \in \J_{k+1}$.
By the induction hypothesis for $\vp_1$ and $\vp_2$, there exist $\psi_1, \psi_2\in \Sigma_{k+1}^+$ such that $\vp_1 \qn \psi_1$ and $\vp_2 \qn \psi_2$.
By Lemma \ref{lem: vp1 land vp2 in Sigmak, Pi_k}, there exists $\psi\in \Sigma_{k+1}^+$ such that $\psi_1 \land \psi_2 \qn \psi$.
Therefore,
we have
\[
\vp_1 \land \vp_2 \qn \psi_1 \land \psi_2 \qn \psi.
\]

In a similar manner, one can also show that if $\vp_1 \land \vp_2 \in \R_{k+1}^n$, there exists $\psi\in \Pi_{k+1}^+$ such that $\vp_1 \land \vp_2 \qn \psi$.

Suppose $ \vp_1 \lor \vp_2 \in \J_{k+1}^n$.

\medskip
\noindent
{\bf Case of $k\leq n$:}
By Lemma \ref{lem: lor Enk and Unk}, we have $\vp_1, \vp_2 \in \J_{k+1}^n$.
By the induction hypothesis for $\vp_1$ and $\vp_2$, there exist $\psi_1 , \psi_2 \in \Sigma_{k+1}^+$ such that $\vp_1 \qn \psi_1$ and $\vp_2 \qn \psi_2$.
By Corollary \ref{cor: vp1 lor vp2 in Sigma_k+1}, there exists $\psi \in \Sigma_{k+1}^+$ such that $\psi_1 \lor \psi_2 \qn \psi$.
Therefore, we have
\[
\vp_1 \lor \vp_2 \qn \psi_1 \lor \psi_2 \qn  \psi .
\]

\medskip
\noindent
{\bf Case of $k> n$:}
By Lemma \ref{lem: lor Enk and Unk}, we have that $\vp_1 \in \J_{k+1}^n$ and $\vp_2 \in \J_{n+1}^n$, or $\vp_1 \in \J_{n+1}^n$ and $\vp_2 \in \J_{k+1}^n$.
Without loss of generality, assume that $\vp_1 \in \J_{k+1}^n$ and $\vp_2 \in \J_{n+1}^n$.
By the induction hypothesis for $\vp_1$, there exists $\psi_1\in \Sigma_{k+1}^+$ such that $\vp_1 \qn \psi_1$.
On the other hand, by the induction hypothesis for $n+1$ (note $n+1 \leq k)$,  there exists $\psi_2\in \Sigma_{n+1}^+$ such that $\vp_2 \qn \psi_2$.
Let $\psi_2\equiv \exists \ol{y} \psi_2'$ with $\psi_2' \in \Pi_n^+$.
By Corollary \ref{cor: vp1 lor vp2 in Sigma_k/Pi_k}, there exists $\psi' \in \Sigma_{k+1}^+$ such that $\psi_1 \lor \psi_2' \qn \psi'$.
Therefore, we have
\[
\vp_1 \lor \vp_2 \qn  \psi_1 \lor \exists \ol{y}\psi_2' \qn \exists \ol{y}(\psi_1 \lor \psi_2') \qn \exists \ol{y} \psi',
\]
which is in $\Sigma_{k+1}^+$.

Suppose $ \vp_1 \lor \vp_2 \in \R_{k+1}^n$.

\medskip
\noindent
{\bf Case of $k< n$:}
By Lemma \ref{lem: lor Enk and Unk}, we have $\vp_1, \vp_2 \in \R_{k+1}^n$.
By the induction hypothesis for $\vp_1$ and $\vp_2$, there exist $\psi_1,  \psi_2 \in \Pi_{k+1}^+$ such that $\vp_1 \qn \psi_1$ and $\vp_2 \qn \psi_2$.
Since $k+1\leq n$ and $\Pi_{k+1}^+ \subset \Pi_n^+ \subset \C_n^+$, by applying the rules $(\exists \lor), (\forall \lor)_n, (\lor \exists)$ and $ (\lor \forall)_n$ of $\qn$, we have $\psi \in \Pi_{k+1}^+$ such that $\psi_1 \lor \psi_2 \qn \psi$.
Therefore, we have
\[
\vp_1 \lor \vp_2 \qn \psi_1 \lor \psi_2 \qn \psi.
\]

\medskip
\noindent
{\bf Case of $k\geq n$:}
Then, by the construction of the class $\R_{k+1}^n$, 
(i) $\vp_1 \lor \vp_2 \in \D_k^n$, (ii) $\vp_1 \in \R_{k+1}^n$ and $\vp_2 \in \D_n^n$, or (iii) $\vp_1 \in \D_n^n$ and  $\vp_2 \in \R_{k+1}^n$.

\noindent
Case of (i):
By the induction hypothesis for $k$, there exists $\psi \in \Sigma_k^+ \cup \Pi_k^+ \subset \Pi_{k+1}^+$ such that $\vp_1 \lor \vp_2 \qn \psi$.

\noindent
Case of (ii):
By the induction hypothesis for $\vp_1$, there exists $\psi_1\in \Pi_{k+1}^+$ such that $\vp_1 \qn \psi_1$.
On the other hand, by the induction hypothesis for $n$ (note $n \leq k)$,  there exists $\psi_2\in \Sigma_n^+ \cup \Pi_n^+$ such that $\vp_2 \qn \psi_2$.
Since $n<k+1 $, by Corollary \ref{cor: vp1 lor vp2 in Sigma_k/Pi_k},
there exists $\psi \in \Pi_{k+1}^+$ such that $\psi_1 \lor \psi_2 \qn \psi$.
Therefore, we have
\[
\vp_1 \lor \vp_2 \qn \psi_1 \lor \psi_2 \qn  \psi.
\]

\noindent
Case of (iii): Similar to the case of (ii).

Suppose $ \vp_1 \to \vp_2 \in \J_{k+1}^n$.

\medskip
\noindent
{\bf Case of $k< n$:}
By Lemma \ref{lem: to Enk and Unk}, we have $\vp_1 \in \R_{k+1}^n$ and $ \vp_2 \in \J_{k+1}^n$.
By the induction hypothesis for $\vp_1$ and $\vp_2$, there exists $\psi_1 \in \Pi_{k+1}^+$ and $\psi_2 \in \Sigma_{k+1}^+$ such that $\vp_1 \qn \psi_1$ and $\vp_2 \qn \psi_2$.
Let $\psi_1 \equiv \forall \ol{x} \psi_1'$ with $\psi_1' \in \Sigma_{k}^+$.
By Lemma \ref{lem: vp1 to vp2 in Sigma_k/Pi_k}, there exists $\psi' \in \Sigma_{k+1}^+$ such that $(\psi_1' \to \psi_2) \qn \psi'$.
Since $k+1 \leq n$ and the proper subformulas of $\forall \ol{x} \psi_1' $ are in $\U_n^+$, by the rule $(\forall \to)_n$ of $\qn$, we have
\[
(\vp_1 \to \vp_2) \qn (\forall \ol{x} \psi_1' \to \psi_2) \qn \exists \ol{x} (\psi_1' \to \psi_2) \qn \exists \ol{x} \psi',
\]
which is in $\Sigma_{k+1}^+$.

\medskip
\noindent
{\bf Case of $k\geq n$:}
Then, by the construction of the class $\J_{k+1}^n$,
(i) $\vp_1 \to\vp_2 \in \D_k^n$, or (ii) $\vp_1 \in \D_n^n$ and $\vp_2 \in \J_{k+1}^n$.

\noindent
Case of (i):
By the induction hypothesis for $k$, there exists $\psi \in \Sigma_k^+ \cup \Pi_k^+ \subset \Sigma_{k+1}^+$ such that $(\vp_1 \to \vp_2 )\qn \psi$.

\noindent
Case of (ii):
By the induction hypothesis for $\vp_2$, there exists $\psi_2\in \Sigma_{k+1}^+$ such that $\vp_2 \qn \psi_2$.
On the other hand, by the induction hypothesis for $n$ (note $n \leq k)$, there exists $\psi_1\in \Sigma_n^+ \cup \Pi_n^+$ such that $\vp_1 \qn \psi_1$.
Since $n<k+1$, by Corollary \ref{cor: vp1 to vp2 in Sigma_k/Pi_k}, there exists $\psi \in \Sigma_{k+1}^+$ such that $(\psi_1 \to \psi_2) \qn \psi$. 
Therefore, we have
\[
(\vp_1 \to \vp_2) \qn (\psi_1 \to \psi_2) \qn \psi.
\]

Suppose $ \vp_1 \to \vp_2 \in \R_{k+1}^n$.

\medskip
\noindent
{\bf Case of $k\leq n$:}
By Lemma \ref{lem: to Enk and Unk}, we have $\vp_1 \in \J_{k+1}^n$ and $ \vp_2 \in \R_{k+1}^n$.
By the induction hypothesis for $\vp_1$ and $\vp_2$, there exist $\psi_1 \in \Sigma_{k+1}^+$ and $\psi_2 \in \Pi_{k+1}^+$ such that $\vp_1 \qn \psi_1$ and $\vp_2 \qn \psi_2$.
By Lemma \ref{lem: vp1 to vp2 in Sigma_k/Pi_k}, there exists $\psi \in \Pi_{k+1}^+$ such that $(\psi_1 \to \psi_2) \qn \psi$.
Therefore, we have
\[
(\vp_1 \to \vp_2) \qn ( \psi_1 \to \psi_2) \qn \psi.
\]

\noindent
{\bf Case of $k> n$:}
By Lemma \ref{lem: to Enk and Unk}, we have $\vp_1 \in \J_{n+1}^n$ and $ \vp_2 \in \R_{k+1}^n$.
By the induction hypothesis for $\vp_2$, there exists $\psi_2\in \Pi_{k+1}^+$ such that $\vp_2 \qn \psi_2$.
On the other hand, by the induction hypothesis for $n+1$ (note $n+1 \leq k)$, there exists $\psi_1\in \Sigma_{n+1}^+ $ such that $\vp_1 \qn \psi_1$.
Since $n+1<k+1$, by Corollary \ref{cor: vp1 to vp2 in Sigma_k/Pi_k}, there exists $\psi \in \Pi_{k+1}^+$ such that $(\psi_1 \to \psi_2) \qn \psi$. 
Therefore, we have
\[
(\vp_1 \to \vp_2) \qn (\psi_1 \to \psi_2) \qn \psi.
\]

Suppose $ \exists \ol{x}\vp_1 \in \J_{k+1}^n$.
By Lemma \ref{lem: exists Enk and Unk}, we have $\vp_1 \in \J_{k+1}^n$.
By the induction hypothesis for $\vp_1$, there exists $\psi_1 \in \Sigma_{k+1}^+$ such that $\vp_1 \qn \psi_1$.
Therefore, we have $\exists \ol{x}\vp_1 \qn \exists \ol{x}\psi_1$, which is in $\Sigma_{k+1}^+$.

Suppose $ \exists \ol{x}\vp_1 \in \R_{k+1}^n$.
By Lemma \ref{lem: exists Enk and Unk}, we have $\exists \ol{x}\vp_1 \in \J_{k}^n$.
By the induction hypothesis for $k$, there exists $\psi\in \Sigma_k^+$ such that $\exists  \ol{x}\vp_1 \qn \psi$.
Since $\Sigma_k^+ \subset \Pi_{k+1}^+$, we are done.

As in the cases of $ \exists \ol{x}\vp_1 \in \J_{k+1}^n$ and $ \exists \ol{x}\vp_1 \in \R_{k+1}^n$, one can show that if $ \forall \ol{x}\vp_1 \in \J_{k+1}^n$, there exists $\psi\in \Sigma_{k+1}^+$ such that $\forall \ol{x}\vp_1 \qn \psi $, and also that if $ \forall \ol{x}\vp_1 \in \R_{k+1}^n$, there exists $\psi\in \Pi_{k+1}^+$ such that $\forall \ol{x}\vp_1 \qn \psi $, respectively.
\end{proof}

\begin{remark}
    In the proof of Theorem \ref{thm: semi-classical PN}, it is possible to use Lemma \ref{lem: lor Enk and Unk} in the case for $\vp_1 \lor \vp_2 \in \R_{k+1}^n$ and Lemma \ref{lem: to Enk and Unk} in the case for $\vp_1 \to \vp_2 \in \J_{k+1}^n$, instead of appealing to the construction of our classes $\J_{k+1}^n$ and $\R_{k+1}^n$.
    But we choose the latter because the number of case distinctions fewer.
\end{remark}

\begin{lem}
    \label{lem: Enk+1 is closed under qn-rule}
    If $\psi \in \J_{k+1}^n$ and $\vp \rightsquigarrow \psi$ with respect to each rule of $\qn$ in Definition \ref{def: rules of qn}, then $\vp \in \J_{k+1}^n$.
\end{lem}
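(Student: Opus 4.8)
The plan is to prove this simultaneously with its companion for $\R^n_{k+1}$ (Lemma~\ref{lem: Unk+1 is closed under qn-rule}), by a main induction on $k$ with, for each fixed $k$, an auxiliary induction on the structure of $\vp$. The main induction hypothesis then supplies both statements for all classes $\J^n_{k'+1}$ and $\R^n_{k'+1}$ with $k'<k$, while the auxiliary hypothesis supplies them for the proper subformulas of $\vp$ at the current value of $k$. Throughout I would use without comment the subformula property of $\D^n_k$ (Lemma~\ref{lem: subformula property of Dnk}), the inclusions $\J^n_m\cup\R^n_m\subset\J^n_{m+1}\cap\R^n_{m+1}$ and $\D^n_m\subset\D^n_{m'}$ for $m\le m'$ (Remark~\ref{rem: Ekn and Ukn are subclasses of Ek+1n and Uk+1n}), and the identifications $\C^+_n=\D^n_n$ and $\U^+_n=\R^n_n$, the latter coming from Proposition~\ref{prop: k>=n => Ek+=Enk and Uk+=Unk} (cf.~Remark~\ref{rem: classes C+}).

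In the auxiliary induction I would first split on whether the single rule carrying $\vp$ to $\psi$ is applied to a proper subformula of $\vp$ or to $\vp$ itself. If it is applied to a proper subformula, then $\vp$ and $\psi$ have the same principal connective, say $\vp\equiv\vp_1\circ\vp_2$ (or $Qx\,\vp_1$) with $\vp_1\rightsquigarrow\vp_1'$ and $\psi\equiv\vp_1'\circ\vp_2$ (or the symmetric situation, or $Qx\,\vp_1'$). I would invert $\psi\in\J^n_{k+1}$ by the appropriate one of Lemmas~\ref{lem: land Enk and Unk}--\ref{lem: forall Enk and Unk} --- or, where their conclusions are too coarse, which happens in the $k>n$ regime for $\to$ and $\lor$, directly by the generation clauses of Definition~\ref{def: iEiU} --- then apply the auxiliary induction hypothesis to $\vp_1$ and reassemble $\vp$ by the corresponding generation clause. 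In the subcases where the inversion places $\vp_1'$ (or $\psi$ itself) into some $\D^n_m$ with $m\le k$, I would instead pass through Lemma~\ref{lem: subformula property of Dnk} and invoke the main induction hypothesis at the lower parameter $m-1$; the case $m=0$ is then vacuous because a genuine rule application leaves a quantifier in $\vp_1'$.

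If the rule is applied to $\vp$ itself, a case distinction over the fourteen rules of Definition~\ref{def: rules of qn} is needed. The rules for $\land$, the rules for $\lor$ without side condition, and the variable renamings are routine: peel off the outer quantifier of $\psi$ (Lemma~\ref{lem: exists Enk and Unk} or~\ref{lem: forall Enk and Unk}), split the inner conjunction or disjunction (Lemma~\ref{lem: land Enk and Unk} or~\ref{lem: lor Enk and Unk}), and rebuild $\vp$, using the evident invariance of the classes under renaming of bound variables for $(\exists\text{-}var)$ and $(\forall\text{-}var)$. For $(\exists\to)$ and $(\to\forall)$ I would decompose $\psi$ via Lemmas~\ref{lem: forall Enk and Unk} and~\ref{lem: to Enk and Unk} and then recognize $\vp$ as a member of $\R^n_k\subset\J^n_{k+1}$. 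For the genuinely semiclassical rules $(\forall\to)_n$, $(\to\exists)_n$, $(\forall\lor)_n$ and $(\lor\forall)_n$ the side condition is essential: $\U^+_n=\R^n_n$ puts $\forall x\,\xi(x)$ into $\R^n_n\subset\D^n_n$ (here $n\ne 0$, so the clause producing $\forall x\,U$ in $\R^n_n$ is available), and $\C^+_n=\D^n_n$ puts $\delta$ into $\D^n_n$; these are exactly the classes that the generation clauses of Definition~\ref{def: iEiU} demand in order to form $\forall x\,\xi(x)\to\delta$, $\delta\to\exists x\,\xi(x)$, $\forall x\,\xi(x)\lor\delta$ and $\delta\lor\forall x\,\xi(x)$ inside $\J^n_{k+1}$ (again, for the rules whose output carries an outermost universal quantifier, by way of $\R^n_k\subset\J^n_{k+1}$, and otherwise directly).

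I expect the main difficulty to be organizational rather than conceptual: the argument fans out into many subcases --- fourteen rules, several structural positions, and the three regimes $k<n$, $k=n$, $k>n$ --- and in the $k>n$ regime one must track, clause by clause, whether a given subformula lies in $\D^n_n$, $\D^n_k$, $\J^n_{n+1}$, $\J^n_{k+1}$ or $\R^n_{k+1}$, using Lemma~\ref{lem: subformula property of Dnk} together with the inclusions of Remark~\ref{rem: Ekn and Ukn are subclasses of Ek+1n and Uk+1n} (and, when needed, a recursive appeal to Lemma~\ref{lem: to Enk and Unk}) to certify that it sits low enough for the reconstructing clause to apply. No individual step is hard, but the bookkeeping must be carried out carefully and kept consistent with the parallel argument for $\R^n_{k+1}$.
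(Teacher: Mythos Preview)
Your approach is correct, but it proves more than the lemma actually asks for. In the paper's usage, a single $\rightsquigarrow$ step means one \emph{top-level} application of one of the rules listed in Definition~\ref{def: rules of qn}; the passage to arbitrary subformula occurrences is what the starred relation $\qn$ adds. Accordingly, the paper's proof of this lemma is much lighter than your outline: it first dispatches the regime $k<n$ in one line by invoking $\J^n_{k+1}=\E^+_{k+1}$ (Proposition~\ref{prop: k>=n => Ek+=Enk and Uk+=Unk}) and the corresponding result from \cite{FK24}, and then, assuming $k\ge n$, runs a direct case analysis over the fourteen rules using only Lemmas~\ref{lem: land Enk and Unk}--\ref{lem: forall Enk and Unk} and the generation clauses of Definition~\ref{def: iEiU}. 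No induction on $k$, no structural induction on $\vp$, and no simultaneous treatment with Lemma~\ref{lem: Unk+1 is closed under qn-rule} is needed.

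What you have sketched is essentially Lemmas~\ref{lem: Enk+1 is closed under qn-rule}, \ref{lem: Unk+1 is closed under qn-rule} and \ref{lem: occurrence substitution on Enk and Unk} rolled into one: your ``rule applied to a proper subformula'' branch, with its course-of-values induction on $k$ and structural induction on $\vp$, is precisely the content of Lemma~\ref{lem: occurrence substitution on Enk and Unk}, and the paper indeed appeals back to Lemmas~\ref{lem: Enk+1 is closed under qn-rule} and~\ref{lem: Unk+1 is closed under qn-rule} there to peel off the case where the redex is the whole formula. Your per-rule analysis in the ``rule applied to $\vp$ itself'' branch matches the paper's almost verbatim (including the use of the side conditions $\U^+_n=\R^n_n$ and $\C^+_n=\D^n_n$, and the observation that $(\exists\to)$ and $(\to\forall)$ land in $\R^n_k\subset\J^n_{k+1}$). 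So nothing is wrong; you have simply chosen a monolithic organisation where the paper opts for a modular one that separates the top-level closure from the subformula-substitution closure and reuses \cite{FK24} for the easy regime $k<n$.
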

\begin{proof}
If $k<n$, since $\J_{k+1}^n =\E_{k+1}^+$ (see Proposition \ref{prop: k>=n => Ek+=Enk and Uk+=Unk}) and the rule of $\qn$ are those of $\DTA$, our assertion follows from \cite[Lemma 12]{FK24}.
Then it suffices to consider each rule of $\qn$ with assuming that $k\geq n$. 
In the following proof, we assume that $x \notin \FV{\delta}$ and often suppress the variable $x$ in $\xi(x)$ for notational simplicity.

$(\exists \to):$
Suppose that $\forall x(\xi \to \delta) \in \J_{k+1}^n$.
By Lemma \ref{lem: forall Enk and Unk}, we  have $\xi \to \delta \in \R_{k}^n$ and $k>0$.
Now $k-1\leq n \, (\leq k)$ or $k-1>n$.
By Lemma \ref{lem: to Enk and Unk}, we have that $\xi \in \J_{k}^n$ and $\delta\in \R_{k}^n$ in the former case, and also that $\xi \in \J_{n+1}^n$ and $\delta\in \R_{k}^n$ in the latter case.
In each case, by the construction of the class $\R_{k}^n$, we have $\exists x \xi \to \delta \in \R_{k}^n \subset \J_{k+1}^n$.

$(\forall \to)_n:$
Suppose that $\exists x(\xi \to \delta) \in \J_{k+1}^n$, $\xi \in \U_n^+$ and $n\neq 0$.
By Lemma \ref{lem: exists Enk and Unk}, we  have $\xi \to \delta \in \J_{k+1}^n$.
By Lemma \ref{lem: to Enk and Unk}, we have $\delta\in \J_{k+1}^n$.
On the other hand, by our assumption, we have $\xi \in \U_n^+ = \R_n^n$, and hence, $\forall x \xi \in \R_n^n \subset \D_n^n$.
Then, by the construction of the class $\J_{k+1}^n$ for $k \geq n$, we have $\forall x \xi \to \delta \in \J_{k+1}^n $.

$(\to \exists)_n:$
Suppose that $\exists x(\delta \to \xi ) \in \J_{k+1}^n$ and $\delta \in \C_n^+$.
By Lemma \ref{lem: exists Enk and Unk}, we  have $\delta \to \xi \in \J_{k+1}^n$.
By Lemma \ref{lem: to Enk and Unk}, we have $\xi\in \J_{k+1}^n$.
On the other hand, by our assumption, we have $\delta \in \C_n^+ = \D_n^n$ (cf. Remark \ref{rem: classes C+}).
Then, by the construction of the class $\J_{k+1}^n$ for $k \geq n$, we have $ \delta \to \exists x \xi \in \J_{k+1}^n $.

$(\to \forall):$
Suppose that $\forall x(\delta \to \xi ) \in \J_{k+1}^n$.
By Lemma \ref{lem: forall Enk and Unk}, we have $\delta \to \xi \in \R_{k}^n$ and $k>0$.
Now $k-1\leq n \, (\leq k)$ or $k-1>n$.
By Lemma \ref{lem: to Enk and Unk}, we have that $\delta \in \J_{k}^n$ and $\xi \in \R_{k}^n$ in the former case, and also that $\delta \in \J_{n+1}^n$ and $\xi \in \R_{k}^n$ in the latter case.
In each case, by the construction of the class $\R_{k}^n$, we have $ \delta \to \forall x \xi \in \R_{k}^n\subset \J_{k+1}^n$.

The cases for $(\exists \land)$, $(\forall \land)$, $(\land \exists)$ and $(\land \forall)$ are verified by using Lemmas \ref{lem: exists Enk and Unk}, \ref{lem: forall Enk and Unk} and \ref{lem: land Enk and Unk} in a straightforward way.

$(\exists \lor):$
Suppose that $\exists x(\xi \lor \delta) \in \J_{k+1}^n$.
By Lemma \ref{lem: exists Enk and Unk}, we  have $ \xi \lor \delta \in \J_{k+1}^n$.
Now $k= n $ or $k>n$.
By Lemma \ref{lem: lor Enk and Unk}, we have that $\xi , \delta \in \J_{k+1}^n$ in the former case, and also that $\xi \in \J_{k+1}^n$ and $\delta\in \J_{n+1}^n$, or $\xi \in \J_{n+1}^n$ and $\delta\in \J_{k+1}^n$ in the latter case.
In the former case, we have $\exists x \xi \in \J_{k+1}^n$, and hence, $\exists x \xi \lor \delta \in \J_{k+1}^n$.
In the latter case, we have that $\exists x \xi \in \J_{k+1}^n$ and $\delta\in \J_{n+1}^n$, or $\exists x  \xi \in \J_{n+1}^n$ and $\delta\in \J_{k+1}^n$.
By the construction of the class $\J_{k+1}^n$ for $k > n$, we have $\exists x \xi \lor \delta \in \J_{k+1}^n$.

$(\forall \lor)_n:$
Suppose that $\forall x(\xi \lor \delta) \in \J_{k+1}^n$ and $\delta \in \D_n^n$.
By Lemma \ref{lem: forall Enk and Unk}, we  have $ \xi \lor \delta \in \R_{k}^n$ and $k>0$.
Now (i) $k-1<n$ (namely, $k=n$), (ii) $k-1 = n$, or (iii) $k-1>n$.
By Lemma \ref{lem: lor Enk and Unk},
we have that $\xi , \delta \in \R_{k}^n$ in the first case, and that $\xi \in \R_{k}^n$ and $\delta\in \D_{k-1}^n$, or $\xi \in \D_{k-1}^n$ and $\delta\in \R_{k}^n$ in the second case, and that $\xi \in \R_{k}^n$ and $\delta\in \J_{n+1}^n$, or $\xi \in \J_{n+1}^n$ and $\delta\in \R_{k}^n$ in the third case.
In the first case, we have $\forall x \xi \in \R_{k}^n$, and hence, $\forall x \xi \lor \delta \in \R_{k}^n$.
In the second case, we have $\forall x \xi \in \R_{k}^n$, and hence, $\forall x \xi \lor \delta \in \R_{k}^n$ by our assumption  $\delta \in \C_n^+$ and Remark \ref{rem: classes C+}.
In the third case, since $n+2 \leq k$, we again have $\forall x \xi \in \R_{k}^n$, and hence, $\forall x \xi \lor \delta \in \R_{k}^n$ by our assumption  $\delta \in \C_n^+$ and Remark \ref{rem: classes C+}.
Thus, in any case, we have $\forall x \xi \lor \delta \in \R_{k}^n$, and hence, $\forall x \xi \lor \delta \in \J_{k+1}^n$.

The cases for $(\lor \exists)$ and $(\lor \forall)_n$ are verified as in the cases for 
$(\exists \lor)$ and $(\forall \lor )_n$ respectively.

The cases for 
$(\exists\text{-var})$
and
$(\forall\text{-var})$
are trivial.
\end{proof}

\begin{lem}
    \label{lem: Unk+1 is closed under qn-rule}
    If $\psi \in \R_{k+1}^n$ and $\vp \rightsquigarrow \psi$ with respect to each rule of $\qn$ in Definition \ref{def: rules of qn}, then $\vp \in \R_{k+1}^n$.
\end{lem}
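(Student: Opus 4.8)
The plan is to mirror the proof of Lemma \ref{lem: Enk+1 is closed under qn-rule} via the evident $\mathcal{J}/\mathcal{R}$-duality, but with the case split now anchored at $k=n$ rather than at $k<n$ for the ``classical'' part. First, if $k<n$ then $\R^n_{k+1}=\U^+_{k+1}$ by Proposition \ref{prop: k>=n => Ek+=Enk and Uk+=Unk}, and since every rule of $\qn$ is an instance of a rule of $\DTA$ (dropping the side conditions only weakens the hypothesis), the claim reduces to \cite[Lemma 12]{FK24}. So it suffices to treat $k\geq n$, and to check each rule of Definition \ref{def: rules of qn} separately: assuming the output $\psi$ lies in $\R^n_{k+1}$, one decomposes it by the inversion Lemmas \ref{lem: land Enk and Unk}, \ref{lem: lor Enk and Unk}, \ref{lem: to Enk and Unk}, \ref{lem: exists Enk and Unk} and \ref{lem: forall Enk and Unk}, and then re-assembles the input $\vp$ using the appropriate generating clause of $\R^n_{k+1}$ (resp.\ $\J^n_{k+1}$) from Definition \ref{def: iEiU}, distinguishing $k=n$ from $k>n$ because those clauses differ. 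Throughout one uses the inclusions $\D^n_m\subseteq \J^n_{k+1}\cap\R^n_{k+1}$ for $m\leq k$ from Remark \ref{rem: Ekn and Ukn are subclasses of Ek+1n and Uk+1n}, and the identifications $\C^+_n=\D^n_n$ (Remark \ref{rem: classes C+}) and $\U^+_n=\R^n_n$, $\Sigma^+$-free arithmetic aside, from Proposition \ref{prop: k>=n => Ek+=Enk and Uk+=Unk}.

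For the purely structural rules the arguments are routine. For $(\forall\land)$ and $(\land\forall)$, Lemma \ref{lem: forall Enk and Unk}\eqref{item: forall Unk} followed by Lemma \ref{lem: land Enk and Unk}\eqref{item: and Unk} puts each conjunct into $\R^n_{k+1}$, so $\forall x\xi\in\R^n_{k+1}$ and the conjunction is back in $\R^n_{k+1}$. For $(\exists\land)$, $(\land\exists)$, $(\exists\lor)$, $(\lor\exists)$ one first applies Lemma \ref{lem: exists Enk and Unk}\eqref{item: exists Unk} to get $\psi\in\J^n_k$ with $k>0$, then Lemma \ref{lem: exists Enk and Unk}\eqref{item: exists Enk} and the matching inversion lemma at index $k-1$ (splitting on whether $k-1\leq n$ or $k-1>n$, exactly as the $\lor$-clauses in the proof of Lemma \ref{lem: Enk+1 is closed under qn-rule}), and finally re-assembles $\exists x\xi$ together with $\delta$ inside $\J^n_k\subseteq\R^n_{k+1}$. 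For $(\exists\to)$ and $(\to\forall)$ one applies Lemma \ref{lem: forall Enk and Unk}\eqref{item: forall Unk} and Lemma \ref{lem: to Enk and Unk}\eqref{item: to Unk}, obtaining the antecedent in $\J^n_{n+1}$ (hence also in $\J^n_{k+1}$ when $k=n$) and the consequent in $\R^n_{k+1}$; then $\exists x\xi$ (resp.\ $\forall x\xi$) together with $\delta$ fits the clause $E\to U$ when $k=n$ and the clause $E_1\to U$ when $k>n$. The rules $(\exists\text{-}var)$ and $(\forall\text{-}var)$ are immediate.

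The delicate cases, and where I expect the main bookkeeping obstacle, are the four semi-classical rules $(\forall\to)_n$ (only present when $n\neq0$), $(\to\exists)_n$, $(\forall\lor)_n$ and $(\lor\forall)_n$. In each the relevant side formula is constrained to $\C^+_n$ (to $\U^+_n$ for $(\forall\to)_n$), and the key observation is that $\C^+_n=\D^n_n$ and $\U^+_n=\R^n_n$: since $\R^n_n$ is closed under $\forall$ when $n\neq 0$, the formula $\forall x\xi$ created by $(\forall\to)_n$ lies in $\R^n_n\subseteq\D^n_n$; the formula $\exists x\xi$ recovered from $(\to\exists)_n$ lies in $\J^n_k$ (after pushing $\psi$ down to $\J^n_k$ via Lemma \ref{lem: exists Enk and Unk} and recovering $\xi\in\J^n_k$ via Lemmas \ref{lem: exists Enk and Unk}\eqref{item: exists Enk} and \ref{lem: to Enk and Unk}\eqref{item: to Enk}); and the side formula $\delta$ of $(\forall\lor)_n$, $(\lor\forall)_n$ lies in $\D^n_n$. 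One then verifies, separately for $k=n$ and $k>n$, that these slot into the correct clause of $\R^n_{k+1}$: into $E\to U$ or $E_1\to U$ for $(\forall\to)_n$ and $(\to\exists)_n$ (using $\D^n_n\subseteq\J^n_{k+1}$ when $k=n$ and $\R^n_n,\D^n_n\subseteq\J^n_{n+1}$ when $k>n$, together with $\J^n_k\subseteq\R^n_{k+1}$), and into the disjunction clauses $U\lor D$, $D\lor U$ (case $k=n$) or $U\lor D_0$, $D_0\lor U$ (case $k>n$) for $(\forall\lor)_n$, $(\lor\forall)_n$, where one first absorbs $\forall x\xi$ into $\R^n_{k+1}$ — using Lemma \ref{lem: forall Enk and Unk}\eqref{item: forall Unk}, Lemma \ref{lem: lor Enk and Unk}\eqref{item: or Unk}, and the inclusions $\D^n_m\subseteq\R^n_{k+1}$ for $m\leq k$ — before pairing it with $\delta\in\D^n_n$. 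Beyond keeping straight which of $\J^n_k$, $\R^n_{k+1}$, $\D^n_n$ or $\J^n_{n+1}$ each subformula lands in as $k=n$ or $k>n$, no ideas beyond those of Lemma \ref{lem: Enk+1 is closed under qn-rule} are required.
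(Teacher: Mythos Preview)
Your proposal is correct and follows essentially the same route as the paper: reduce the case $k<n$ to \cite[Lemma 12]{FK24} via Proposition \ref{prop: k>=n => Ek+=Enk and Uk+=Unk}, and for $k\geq n$ check each rule by applying the inversion Lemmas \ref{lem: land Enk and Unk}--\ref{lem: forall Enk and Unk} and then re-assembling using the generating clauses of Definition \ref{def: iEiU}, splitting on $k=n$ versus $k>n$. One small presentational difference: for $(\forall\to)_n$ and $(\to\exists)_n$ the paper assembles the input $\vp$ inside $\J^n_k$ (using the $U\to E$ or $D_0\to E$ clause there) and only then passes to $\R^n_{k+1}$ via the inclusion, whereas you assemble directly in $\R^n_{k+1}$ via the $E\to U$ or $E_1\to U$ clause, first lifting $\delta$ (resp.\ $\exists x\xi$) from $\J^n_k$ to $\R^n_{k+1}$; both routes work and rely on the same inclusions.
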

\begin{proof}
As in the proof of Lemma \ref{lem: Enk+1 is closed under qn-rule},
it suffices to consider each rule of $\qn$ with assuming that $k\geq n$. 
In the following proof, we assume that $x \notin \FV{\delta}$ and often suppress the variable $x$ in $\xi(x)$ for notational simplicity.

$(\exists \to):$
Suppose that $\forall x(\xi \to \delta) \in \R_{k+1}^n$.
By Lemma \ref{lem: forall Enk and Unk}, we  have $\xi \to \delta \in \R_{k+1}^n$.
Now $k\leq n$ (namely, $k=n$) or $k>n$.
By Lemma \ref{lem: to Enk and Unk}, we have that $\xi \in \J_{k+1}^n$ and $\delta\in \R_{k+1}^n$ in the former case, and also that $\xi \in \J_{n+1}^n$ and $\delta\in \R_{k+1}^n$ in the latter case.
In each case, by the construction of the class $\R_{k+1}^n$, we have $\exists x \xi \to \delta \in \R_{k+1}^n$.

$(\forall \to)_n:$
Suppose that $\exists x(\xi \to \delta) \in \R_{k+1}^n$, $\xi \in \U_n^+$ and $n\neq 0$.
By Lemma \ref{lem: exists Enk and Unk}, we have $\xi \to \delta \in \J_{k}^n$ and $k>0$.
Now $k-1< n$ (namely, $k=n$) or $k-1 \geq n$.
By Lemma \ref{lem: to Enk and Unk}, we have that $\xi \in \R_{k}^n$ and $\delta\in \J_{k}^n$ in the former case, and also that $\xi \in \J_{n+1}^n$ and $\delta\in \J_{k}^n$ in the latter case.
In the former case, we have $\forall x \xi \in \R_{k}^n$, and hence, $\forall x \xi  \to \delta \in \J_{k}^n \subset \R_{k+1}^n$.
In the latter case, by our assumption  $\xi \in \U_n^+$ and Proposition \ref{prop: k>=n => Ek+=Enk and Uk+=Unk}, we have $\forall x \xi \in \R_n^n \subset \D_n^n$, and hence, $\forall x  \xi \to \delta \in \J_{k}^n \subset \R_{k+1}^n$ by the construction of the class $\J_k^n$ for $k-1\geq n$. 

$(\to \exists)_n:$
Suppose that $\exists x(\delta \to \xi ) \in \R_{k+1}^n$ and $\delta \in \C_n^+$.
By Lemma \ref{lem: exists Enk and Unk}, we  have $\delta \to \xi \in \J_{k}^n$ and k>0.
Now $k-1< n$ (namely, $k=n$) or $k-1 \geq n$.
By Lemma \ref{lem: to Enk and Unk}, we have that $\delta \in \R_{k}^n$ and $\xi \in \J_{k}^n$ in the former case, and also that $\delta \in \J_{n+1}^n$ and $\xi \in \J_{k}^n$ in the latter case.
In the former case, we have $\exists x \xi \in \J_{k}^n$, and hence, $ \delta \to \exists x \xi \in \J_{k}^n \subset \R_{k+1}^n$.
In the latter case, by our assumption  $\delta \in \C_n^+$ and the construction of the class $\J_k^n$ for $k-1\geq n$, we have $ \delta \to \exists x \xi \in \J_{k}^n \subset \R_{k+1}^n$.

$(\to \forall):$
Suppose that $\forall x(\delta \to \xi ) \in \R_{k+1}^n$.
By Lemma \ref{lem: forall Enk and Unk}, we have $\delta \to \xi \in \R_{k+1}^n$.
Now $k\leq n$ (namely, $k=n$) or $k>n$.
By Lemma \ref{lem: to Enk and Unk}, we have that $\delta \in \J_{k+1}$ and $\xi \in \R_{k+1}^n $ in the former case, and also that $\delta \in \J_{n+1}^n$ and $\xi \in \R_{k+1}^n$ in the latter case.
In any case, we have that $\forall x \xi \in \R_{k+1}^n$, and hence, $\delta \to \forall x \xi \in \R_{k+1}^n$ by the construction of $\R_{k+1}^n$.

The cases for $(\exists \land)$, $(\forall \land)$, $(\land \exists)$ and $(\land \forall)$ are verified by using Lemmas \ref{lem: exists Enk and Unk}, \ref{lem: forall Enk and Unk} and \ref{lem: land Enk and Unk} in a straightforward way.

$(\exists \lor):$
Suppose that $\exists x(\xi \lor \delta) \in \R_{k+1}^n$.
By Lemma \ref{lem: exists Enk and Unk}, we  have $ \xi \lor \delta \in \J_{k}^n$ and $k>0$.
Now $k-1 \leq n \, (\leq k)$ or $k-1 >n$.
By Lemma \ref{lem: lor Enk and Unk}, we have that $\xi , \delta \in \J_{k}^n$ in the former case, and also that $\xi \in \J_{k}^n$ and $\delta\in \J_{n+1}^n$, or $\xi \in \J_{n+1}^n$ and $\delta\in \J_{k}^n$ in the latter case.
In the former case, we have $\exists x \xi , \delta \in \J_{k}^n$.
In the latter case, we have that $\exists x \xi \in \J_{k}^n$ and $\delta\in \J_{n+1}^n$, or $\exists x  \xi \in \J_{n+1}^n$ and $\delta\in \J_{k}^n$.
In each case, by the construction of the class $\J_{k}^n$, we have $\exists x \xi \lor \delta \in \J_{k}^n\subset \R_{k+1}^n$.

$(\forall \lor)_n:$
Suppose that $\forall x(\xi \lor \delta) \in \R_{k+1}^n$ and $\delta \in \C_n^+$.
By Lemma \ref{lem: forall Enk and Unk}, we  have $ \xi \lor \delta \in \R_{k+1}^n$.
Now $k= n$ or $k>n$.
By Lemma \ref{lem: lor Enk and Unk},
we have that $\xi \in \R_{k+1}^n$ and $\delta\in \D_{k}^n$, or $\xi \in \D_{k}^n$ and $\delta\in \R_{k+1}^n$ in the former case, and that $\xi \in \R_{k+1}^n$ and $\delta\in \J_{n+1}^n$, or $\xi \in \J_{n+1}^n$ and $\delta\in \R_{k+1}^n$ in the latter case.
In the former case, we have $\forall x \xi \in \R_{k+1}^n$, and hence, $\forall x \xi \lor \delta \in \R_{k+1}^n$ by our assumption  $\delta \in \C_n^+$ and Remark \ref{rem: classes C+}.
In the latter case, since $\R_{n+2}^n \subset \R_{k+1}^n$, we again have $\forall x \xi \in \R_{k+1}^n$, and hence, $\forall x \xi \lor \delta \in \R_{k+1}^n$ by our assumption  $\delta \in \C_n^+$ and Remark \ref{rem: classes C+}.

The cases for $(\lor \exists)$ and $(\lor \forall)_n$ are verified as in the cases for 
$(\exists \lor)$ and $(\forall \lor )_n$ respectively.

The case for $(\exists\text{-var})$ and $(\forall\text{-var})$ are trivial.
\end{proof}

\begin{remark}
    Our technical Lemmas \ref{lem: lor Enk and Unk} and \ref{lem: to Enk and Unk} seem to be essential for the proofs of Lemmas \ref{lem: Enk+1 is closed under qn-rule} and \ref{lem: Unk+1 is closed under qn-rule}.
\end{remark}

\begin{lem}
    \label{lem: classical occurrence substitution on Enk and Unk}
    If $n> k$ and $\xi \qn \xi'$, then the following hold$:$
    \begin{enumerate}
        \item \label{item: classical occurrence substitution on Enk}
        if $\vp' \equiv \vp\br{\xi' / \xi}\in \J_{k+1}^n$, then $\vp\in \J_{k+1}^n;$
           \item \label{item: classical occurrence substitution on Unk}
        if $\vp' \equiv \vp\br{\xi' / \xi}\in \R_{k+1}^n$, then $\vp\in \R_{k+1}^n.$
    \end{enumerate}
\end{lem}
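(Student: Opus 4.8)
The plan is to exploit the hypothesis $n>k$, under which our classes collapse to the classical ones, so that the statement becomes an easy consequence of \cite{FK24}. Since $n>k$ means $k+1\leq n$, Proposition~\ref{prop: k>=n => Ek+=Enk and Uk+=Unk} gives $\J_{k+1}^n=\E_{k+1}^+$ and $\R_{k+1}^n=\U_{k+1}^+$. So I would restate the two claims as: if $\xi\qn\xi'$ and $\vp\br{\xi'/\xi}\in\E_{k+1}^+$ (resp.\ $\in\U_{k+1}^+$), then $\vp\in\E_{k+1}^+$ (resp.\ $\in\U_{k+1}^+$).

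Next I would record that $\qn$ is contained in $\DTA$: every rule listed in Definition~\ref{def: rules of qn} is an instance of one of the prenex transformations of Table~\ref{table: PN} (the rules $(\forall \to)_n$, $(\to \exists)_n$, $(\forall \lor)_n$, $(\lor \forall)_n$ being $(\forall \to)$, $(\to \exists)$, $(\forall \lor)$, $(\lor \forall)$ under an extra side condition on $\delta$, and the remaining rules occurring literally in Table~\ref{table: PN}), and $\qn$ applies such rules only to subformula occurrences, which is also allowed for $\DTA$. Hence $\xi\qn\xi'$ implies $\xi\DTA\xi'$. Since the notation $\vp\br{\xi'/\xi}$ presupposes that $\xi$ occurs as a subformula of $\vp$, Proposition~\ref{lem: subformula app} yields $\vp\qn\vp\br{\xi'/\xi}$, and therefore $\vp\DTA\vp\br{\xi'/\xi}$.

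Now assume $\vp\br{\xi'/\xi}\in\E_{k+1}^+$. By the characterization of $\E_{k+1}^+$ via $\DTA$ established in \cite{FK24} and recalled in the introduction (a formula is in $\E_{k+1}^+$ iff it can be transformed into a $\Sigma_{k+1}^+$-formula with respect to $\DTA$), there is $\chi\in\Sigma_{k+1}^+$ with $\vp\br{\xi'/\xi}\DTA\chi$; composing with $\vp\DTA\vp\br{\xi'/\xi}$ and using transitivity of $\DTA$ (it is the relation of performing finitely many prenex transformations) gives $\vp\DTA\chi\in\Sigma_{k+1}^+$, so $\vp\in\E_{k+1}^+=\J_{k+1}^n$, which is \eqref{item: classical occurrence substitution on Enk}. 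Replacing $\E$, $\Sigma$ by $\U$, $\Pi$ throughout gives \eqref{item: classical occurrence substitution on Unk}.

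I do not expect any genuine obstacle here: the entire point of the hypothesis $n>k$ is precisely that it spares us from re-running the structural inductions of Definition~\ref{def: iEiU} and of Lemmas~\ref{lem: Enk+1 is closed under qn-rule} and~\ref{lem: Unk+1 is closed under qn-rule}. If one preferred a self-contained argument staying inside the semi-classical framework, one could instead decompose the derivation $\vp\qn\vp\br{\xi'/\xi}$ into single $\qn$-steps and invoke Lemmas~\ref{lem: Enk+1 is closed under qn-rule} and~\ref{lem: Unk+1 is closed under qn-rule} step by step, but this would additionally require propagating those lemmas through subformula occurrences; the route via \cite{FK24} uses only facts already stated, so that is the one I would adopt.
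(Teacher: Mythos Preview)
Your proposal is correct and follows essentially the same strategy as the paper: both reduce to the classical case via Proposition~\ref{prop: k>=n => Ek+=Enk and Uk+=Unk} and the inclusion $\qn\subset\DTA$, then invoke a result from \cite{FK24}. The only difference is cosmetic: the paper cites the occurrence-substitution lemma \cite[Lemma~13]{FK24} directly, whereas you rederive its content from the characterization of $\E_{k+1}^+$ and $\U_{k+1}^+$ together with Proposition~\ref{lem: subformula app} and transitivity of $\DTA$.
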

\begin{proof}
    Since $\J_{k+1}^n =\E_{k+1}^+$ and $\R_{k+1}^n =\U_{k+1}^+$ for $k$ and $n$ such that $n> k$ (see Proposition \ref{prop: k>=n => Ek+=Enk and Uk+=Unk}) and the rules of $\qn$ are those of $\DTA$, our lemma follows from \cite[Lemma 13]{FK24}.
\end{proof}

\begin{lem}
\label{lem: occurrence substitution on Enk and Unk}
    If  $n\leq  k$ and $\xi \qn \xi'$, then the following hold$:$
    \begin{enumerate}
        \item \label{item: occurrence substitution on Enk}
        if $\vp' \equiv \vp\br{\xi' / \xi}\in \J_{k+1}^n$, then $\vp\in \J_{k+1}^n;$
           \item \label{item: occurrence substitution on Unk}
        if $\vp' \equiv \vp\br{\xi' / \xi}\in \R_{k+1}^n$, then $\vp\in \R_{k+1}^n.$
    \end{enumerate}
\end{lem}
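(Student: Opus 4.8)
The plan is to fix $n$ and prove, for every $m$, the statement $P(m)$: ``for all $\vp,\xi,\xi'$ with $\xi\qn\xi'$ and $\vp'\equiv\vp\br{\xi'/\xi}$, if $\vp'\in\J^n_m$ then $\vp\in\J^n_m$, and if $\vp'\in\R^n_m$ then $\vp\in\R^n_m$''. For $m\le n$ this $P(m)$ is already available: it is the content of Lemma~\ref{lem: classical occurrence substitution on Enk and Unk} (applied with $k:=m-1$, since then $n>k$), or trivial for $m=0$. So the lemma to be proved is exactly $P(k+1)$ for $k\ge n$, and I would establish $P(m)$ for all $m$ by course-of-value induction on $m$; the only case needing work is $m=k+1$ with $k\ge n$, and at that point $P(m')$ is already known for every $m'<m$ (from Lemma~\ref{lem: classical occurrence substitution on Enk and Unk} when $m'\le n$, from the induction hypothesis when $n<m'<m$).

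To prove $P(k+1)$ I would first reduce to a single rule application by a side induction on the number of rules applied in $\xi\qn\xi'$ (we may assume $\xi$ occurs in $\vp$, else $\vp'\equiv\vp$). The base case $\xi\equiv\xi'$ is trivial. For the step, write $\xi\qn\zeta\rightsquigarrow\xi'$ where the last step applies one rule of $\qn$ at a subformula occurrence $\eta$ of $\zeta$, say $\eta\rightsquigarrow\eta'$ with $\xi'\equiv\zeta\br{\eta'/\eta}$. Then $\vp\br{\xi'/\xi}\equiv\bigl(\vp\br{\zeta/\xi}\bigr)\br{\eta'/\eta}$, the inner occurrence being the one inside the inserted copy of $\zeta$. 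Hence, granted the Claim below applied to $\chi:=\vp\br{\zeta/\xi}$, we obtain $\vp\br{\zeta/\xi}\in\J^n_{k+1}$ (resp.\ $\R^n_{k+1}$), and then $\vp\in\J^n_{k+1}$ (resp.\ $\R^n_{k+1}$) by the side induction hypothesis applied to $\xi\qn\zeta$.

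The remaining work is the \textbf{Claim}: for any single $\qn$-rule instance $\eta\rightsquigarrow\eta'$ and any formula $\chi$ with a fixed occurrence of $\eta$, if $\chi\br{\eta'/\eta}\in\J^n_{k+1}$ then $\chi\in\J^n_{k+1}$, and likewise for $\R^n_{k+1}$. I would prove this by induction on the structure of $\chi$. If the chosen occurrence of $\eta$ is all of $\chi$, then $\chi\br{\eta'/\eta}\equiv\eta'$ and the assertion is precisely Lemma~\ref{lem: Enk+1 is closed under qn-rule} (resp.\ Lemma~\ref{lem: Unk+1 is closed under qn-rule}); a prime $\chi$ cannot contain a proper occurrence of a $\qn$-redex, so otherwise $\chi\equiv\chi_1\circ\chi_2$ with $\circ\in\{\land,\lor,\to\}$ or $\chi\equiv Qx\,\chi_1$, the occurrence of $\eta$ lying in a proper subformula. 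In each such case I would read off from the clause of Definition~\ref{def: iEiU} that witnesses $\chi\br{\eta'/\eta}\in\J^n_{k+1}$ (resp.\ $\R^n_{k+1}$) — the relevant sub-case of the definition being the one selected by whether $k<n$, $k=n$ or $k>n$ — which subclass each immediate constituent lies in. If $\chi\br{\eta'/\eta}$ landed directly in $\D^n_k=\J^n_k\cup\R^n_k$, I apply $P(k)$ (legitimate, $k<k+1$) to $\chi$ itself and use $\D^n_k\subseteq\J^n_{k+1}\cap\R^n_{k+1}$ (Remark~\ref{rem: Ekn and Ukn are subclasses of Ek+1n and Uk+1n}). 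Otherwise the constituent containing $\eta$ lies either in $\J^n_{k+1}$ or $\R^n_{k+1}$ (same level: apply the structural induction hypothesis to the corresponding $\chi_i$) or in one of $\D^n_n=\J^n_n\cup\R^n_n$, $\J^n_{n+1}$ (levels $n$ and $n+1$, both $<k+1$: apply $P(n)$ or $P(n+1)$ to $\chi_i$), while the other constituent is already in the required subclass; then I reassemble $\chi$ via the same clause of Definition~\ref{def: iEiU}. (For conjunctions and quantifiers one may instead use the — here tight — Lemmas~\ref{lem: land Enk and Unk}, \ref{lem: exists Enk and Unk}, \ref{lem: forall Enk and Unk}.)

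The main obstacle is the bookkeeping in this last induction step: the argument must be split exactly along the three cases $k<n$, $k=n$, $k>n$ of Definition~\ref{def: iEiU}, and for each immediate subformula of $\chi$ one has to track which of $\J^n_{k+1},\R^n_{k+1},\D^n_k,\D^n_n,\J^n_{n+1}$ it inhabits before routing it to the structural hypothesis or to the correct lower-level instance $P(k)$, $P(n)$, $P(n+1)$, and then check that reassembly is licensed by the matching generating clause. The subtle point that forces one to argue through the generating clauses rather than through the inversion Lemma~\ref{lem: to Enk and Unk} is the implication case with $n<k$: there the generating clause puts the antecedent in $\D^n_n$, whereas Lemma~\ref{lem: to Enk and Unk} only yields the weaker membership in $\J^n_{n+1}\supsetneq\D^n_n$, which would not suffice to reconstitute $\chi$.
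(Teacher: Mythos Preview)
Your proposal is correct and follows essentially the same route as the paper: course-of-value induction on the level, then a structural induction on the formula, invoking Lemmas~\ref{lem: Enk+1 is closed under qn-rule} and~\ref{lem: Unk+1 is closed under qn-rule} when the occurrence is the whole formula, decomposing via the generating clauses of Definition~\ref{def: iEiU} (or the tight inversion lemmas for $\land$, $\exists$, $\forall$), and appealing to Lemma~\ref{lem: classical occurrence substitution on Enk and Unk} or the course-of-values hypothesis whenever a constituent lands in a class of level $\le k$.

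The one organizational difference is that you make explicit the side induction on the length of $\xi\qn\xi'$, reducing everything to your single-rule Claim; the paper instead simply writes ``By Lemmas~\ref{lem: Enk+1 is closed under qn-rule} and~\ref{lem: Unk+1 is closed under qn-rule}\ldots\ we may assume that $\xi$ is a proper subformula of $\vp$'' and leaves that reduction implicit. Your explicit formulation is a bit tidier. You also correctly flag the reason one must go through the generating clauses rather than Lemma~\ref{lem: to Enk and Unk} in the $\to$-case with $k>n$ (the antecedent must be pulled back to $\D^n_n$, not merely $\J^n_{n+1}$); the paper does exactly this and makes the same point in the Remark following the proof, noting that the analogous issue arises for $\lor$ in $\R^n_{k+1}$ as well.
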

\begin{proof}
    By course-of-value induction on $k$.
Since the base step is verified as in the induction step with some modification, we present only the proof of the induction step.
For the induction step, assume $k>0$ and that for any $k'\leq k-1$, any formula $\vp$, any $n\leq k'$ and any formulas $\vp', \xi$ and $\xi'$, if $\xi \qn \xi'$, then \eqref{item: occurrence substitution on Enk} and \eqref{item: occurrence substitution on Unk} hold for $k'$.
We show the assertion holds for $k$ by induction on the structure of formulas.
For a prime $\vp$, since $\vp \in \D_0^n$ for any $n$, we are done.
Assume that for any $n\leq k$ and any formulas $\vp', \xi$ and $\xi'$, if $\xi \qn \xi'$, then \eqref{item: occurrence substitution on Enk} and \eqref{item: occurrence substitution on Unk} hold for $\vp_1$ and $\vp_2$.
By Lemmas \ref{lem: Enk+1 is closed under qn-rule} and \ref{lem: Unk+1 is closed under qn-rule}, in each of the following cases, we may assume that $\xi$ is a proper subformula of $\vp$.

Let $\vp \equiv \vp_1 \land \vp_2$.
Let $n\leq k$.
Assume $\xi \qn \xi', \vp' \equiv \vp\br{\xi' / \xi}\in \J_{k+1}^n$ and that $\xi$ is a proper subformula of $\vp$ occurring in $\vp$.
Without loss of generality, let $\xi$ be a subformula of $\vp_1$ occurring in $\vp_1$.
Suppose $\vp' \equiv \vp_1 \br{\xi' / \xi} \land \vp_2 \in \J_{k+1}^n$.
By Lemma \ref{lem: land Enk and Unk}, we have $\vp_1 \br{\xi' / \xi} \in  \J_{k+1}^n$ and $\vp_2\in \J_{k+1}^n$.
By the induction hypothesis for $\vp_1$, we have $\vp_1 \in \J_{k+1}^n$, and hence, $\vp\equiv \vp_1 \land \vp_2 \in\J_{k+1}^n$.
In a similar way, one can also show the case for $\vp' \equiv \vp \br{\xi' / \xi} \in \R_{k+1}^n$.

Let $\vp \equiv \vp_1 \lor \vp_2$.
Let $n\leq k$.
Assume $\xi \qn \xi', \vp' \equiv \vp\br{\xi' / \xi}\in \J_{k+1}^n$ and that $\xi$ is a proper subformula of $\vp$ occurring in $\vp$.
Without loss of generality, let $\xi$ be a subformula of $\vp_1$ occurring in $\vp_1$.

\medskip
\noindent
Firstly, suppose $\vp' \equiv  \vp_1 \br{\xi' / \xi} \lor \vp_2 \in \J_{k+1}^n$.
If $n=k$,  by Lemma \ref{lem: lor Enk and Unk}, we have $\vp_1 \br{\xi' / \xi} , \vp_2 \in  \J_{k+1}^n$.
By the induction hypothesis for $\vp_1$, we have $\vp_1\in \J_{k+1}^n$, and hence, $\vp\equiv \vp_1 \lor \vp_2 \in\J_{k+1}^n$.
If $n <k$, by Lemma \ref{lem: lor Enk and Unk}, we have $\vp_1 \br{\xi' / \xi}\in  \J_{k+1}^n$ and $ \vp_2 \in \J_{n+1}^n$, or $\vp_1 \br{\xi' / \xi}\in  \J_{n+1}^n$ and  $\vp_2 \in  \J_{k+1}^n$.
In the former case, by the induction hypothesis for $\vp_1$, we have $\vp_1\in \J_{k+1}^n$, and hence, $\vp\equiv \vp_1 \lor \vp_2 \in\J_{k+1}^n$.
In the latter case, by the induction hypothesis for $n$ ($<k$), we have $\vp_1\in \J_{n+1}^n$, and hence, $\vp\equiv \vp_1 \lor \vp_2 \in\J_{k+1}^n$.

\medskip
\noindent
Secondly, suppose  $\vp' \equiv  \vp_1 \br{\xi' / \xi} \lor \vp_2 \in \R_{k+1}^n$.
Since $n\leq k$, by the construction of the class $\R_{k+1}^n$, (i) $ \vp_1 \br{\xi' / \xi} \lor \vp_2 \in \D_{k}^n $, (ii) $ \vp_1 \br{\xi' / \xi}  \in \R_{k+1}^n$ and $\vp_2 \in \D_{n}^n$, or (iii) $\vp_1 \br{\xi' / \xi} \in \D_{n}^n$ and $ \vp_2  \in \R_{k+1}^n$.
In the second case,  by the induction hypothesis for $\vp_1$, we have $\vp_1\in \R_{k+1}^n$, and hence, $\vp\equiv \vp_1 \lor \vp_2 \in\R_{k+1}^n$.
In the third case,
by Lemma \ref{lem: classical occurrence substitution on Enk and Unk},
we have $\vp_1\in \D_n^n$, and hence, $\vp\equiv \vp_1 \lor \vp_2 \in\R_{k+1}^n$.
We reason in the first case, namely, the case of  $ \vp_1 \br{\xi' / \xi} \lor \vp_2 \in \D_{k}^n $.
If $n\leq  k-1$, by the induction hypothesis for $k-1$, we have $\vp\equiv \vp_1 \lor \vp_2 \in \D_k^n \subset \R_{k+1}^n$.
If $n=k$, by Lemma \ref{lem: classical occurrence substitution on Enk and Unk}, we again have $\vp\equiv \vp_1 \lor \vp_2 \in \D_k^n \subset  \R_{k+1}^n$.

Let $\vp \equiv \vp_1 \to \vp_2$.
Let $n \leq k$.
Assume $\xi \qn \xi', \vp' \equiv \vp\br{\xi' / \xi}\in \J_{k+1}^n$ and that $\xi$ is a proper subformula of $\vp$ occurring in $\vp$.

\medskip
\noindent
{\bf Case of that $\xi$ is a subformula of $\vp_1$ occurring in $\vp_1$:}

\medskip
\noindent
Firstly, suppose $\vp' \equiv  \vp_1 \br{\xi' / \xi} \to \vp_2 \in \J_{k+1}^n$.
By the construction of the class $\J_{k+1}^n$, $ \vp_1 \br{\xi' / \xi} \to \vp_2 \in \D_k^n$, or $\vp_1 \br{\xi' / \xi} \in \D_n^n$ and $\vp_2 \in \J_{k+1}^n$.
In the latter case, by Lemma \ref{lem: classical occurrence substitution on Enk and Unk}, we have $\vp_1\in \D_n^n$, and hence, $\vp\equiv \vp_1 \to \vp_2 \in \J_{k+1}^n$.
We reason in the former case, namely, the case of $ \vp_1 \br{\xi' / \xi} \to \vp_2 \in \D_k^n$.
If  $n \leq k-1$, by  the induction hypothesis for $k-1$, we have $\vp\equiv \vp_1 \to \vp_2 \in \D_k^n \subset \J_{k+1}^n$.
If $n=k$,  by Lemma \ref{lem: classical occurrence substitution on Enk and Unk}, we again have $\vp\equiv \vp_1 \to \vp_2 \in \D_k^n \subset  \J_{k+1}^n$.

\medskip
\noindent
Secondly, suppose $\vp' \equiv  \vp_1 \br{\xi' / \xi} \to \vp_2 \in \R_{k+1}^n$.
By Lemma \ref{lem: to Enk and Unk}, we have that $\vp_1 \br{\xi' / \xi} \in \J_{n+1}^n$ and $\vp_2 \in \R_{k+1}^n$.
If $n=k$, by the induction hypothesis for $\vp_1$, we have $\vp_1 \in \J_{n+1}^n$.
If $n<k$, by the induction hypothesis for $n$ ($<k$),  we again have $\vp_1 \in \J_{n+1}^n$.
Therefore, we have $\vp \equiv \vp_1 \to \vp_2 \in \R_{k+1}^n$.

\medskip
\noindent
{\bf Case of that $\xi$ is a subformula of $\vp_1$ occurring in $\vp_2$:}

\medskip
\noindent
Firstly, suppose $\vp' \equiv  \vp_1  \to \vp_2\br{\xi' / \xi} \in \J_{k+1}^n$.
By the construction of the class $\J_{k+1}^n$, $ \vp_1  \to \vp_2\br{\xi' / \xi} \in \D_k^n$, or $\vp_1  \in \D_n^n$ and $\vp_2\br{\xi' / \xi} \in \J_{k+1}^n$.
In the former case, as in the case that $\xi$ is a subformula of $\vp_1$ occurring in $\vp_1$, we have $\vp\equiv \vp_1 \to \vp_2 \in \D_{k}^n \subset \J_{k+1}^n$.
In the latter case, by the induction hypothesis for $\vp_2$, we have $\vp_2 \in \J_{k+1}^n$, and hence, $\vp\equiv \vp_1\to \vp_2 \in \J_{k+1}^n$.

\medskip
\noindent
Secondly, suppose $\vp' \equiv  \vp_1 \to \vp_2  \br{\xi' / \xi} \in \R_{k+1}^n$.
By  Lemma \ref{lem: to Enk and Unk}, we have that $\vp_1  \in \J_{n+1}^n$ and $\vp_2\br{\xi' / \xi} \in \R_{k+1}^n$.
By the induction hypothesis for $\vp_2$, we have $\vp_2 \in \R_{k+1}^n$, and hence, $\vp \equiv \vp_1 \to \vp_2 \in \R_{k+1}^n$.

Let $\vp \equiv \exists x \vp_1 $.
Let $n\leq k$.
Assume $\xi \qn \xi', \vp' \equiv \vp\br{\xi' / \xi}\in \J_{k+1}^n$ and that $\xi$ is a proper subformula of $\vp$ occurring in $\vp$.
Then $\xi$ is a subformula of $\vp_1$ occurring in $\vp_1$.

\medskip
\noindent
Firstly, suppose $\vp' \equiv  \vp \br{\xi' / \xi} \equiv \exists x \left( \vp_1 \br{\xi' / \xi} \right)\in \J_{k+1}^n$.
By Lemma \ref{lem: exists Enk and Unk}, we have $\vp_1 \br{\xi' / \xi}\in \J_{k+1}^n$.
By the induction hypothesis for $\vp_1$, we have $\vp_1 \in \J_{k+1}^n$, and hence, $\exists x \vp_1 \in \J_{k+1}^n$.

\medskip
\noindent
Secondly, suppose $\vp' \equiv \exists x  \vp_1 \br{\xi' / \xi} \in \R_{k+1}^n$.
By Lemma \ref{lem: exists Enk and Unk}, we have $\exists x \vp_1 \br{\xi' / \xi} \in \J_{k}^n$ and $k>0$.
If $n\leq k-1$, by the induction hypothesis for $k-1$, we have $\vp \equiv \exists x \vp_1 \in \J_k^n \subset \J_{k+1}^n$.
If $n=k$, by Lemma \ref{lem: classical occurrence substitution on Enk and Unk}, we again have $\vp \equiv \exists x \vp_1 \in \J_k^n \subset \J_{k+1}^n$.

The case for $\vp \equiv \forall x \vp_1 $ can be verified as in the case for $\vp \equiv \exists x \vp_1 $ by using Lemma \ref{lem: forall Enk and Unk} instead of Lemma \ref{lem: exists Enk and Unk}.
\end{proof}

\begin{remark}
    In the proof of Lemma \ref{lem: occurrence substitution on Enk and Unk}, it is possible to appeal to the construction of our classes $\J_{k+1}^n$ and $\R_{k+1}^n$ instead of using Lemmas \ref{lem: land Enk and Unk}, \ref{lem: lor Enk and Unk} and Lemma \ref{lem: to Enk and Unk}.
    On the other hand, since the clause \eqref{item: or Unk} in Lemma \ref{lem: lor Enk and Unk} and the clause \eqref{item: to Enk} in Lemma \ref{lem: to Enk and Unk} in the case of $k>n$, are weaker than the consequences obtained from the constructions respectively,
    some appealing to the construction of the classes which appears in the proof of Lemma \ref{lem: occurrence substitution on Enk and Unk} seems to be necessary.
\end{remark}

\begin{thm}
\label{thm: the converse of semi-classical PN}
The following hold$:$
    \begin{enumerate}
        \item 
        if there exists $\psi\in \J_k^n$ such that $\vp \qn \psi$, then $\vp\in \J_k^n;$
        \item 
        if there exists $\psi\in \R_k^n$ such that $\vp \qn \psi$, then $\vp\in \R_k^n.$        
    \end{enumerate}
\end{thm}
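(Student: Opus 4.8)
The plan is to argue by induction on the number $m$ of single rule applications witnessing $\vp\qn\psi$, proving both clauses simultaneously. If $m=0$, then $\psi\equiv\vp$ and there is nothing to prove. For the inductive step, write the derivation as $\vp\qn\psi'\rightsquigarrow\psi$, where $\vp\qn\psi'$ uses $m$ applications and $\psi'\rightsquigarrow\psi$ is a single rule step; by Definition \ref{def: rules of qn} this means there is a subformula occurrence $\chi$ of $\psi'$, all of whose proper subformulas are in prenex normal form, together with one of the rules of $\qn$ contracting $\chi$ to $\chi'$, such that $\psi\equiv\psi'\br{\chi'/\chi}$; in particular $\chi\qn\chi'$. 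It suffices to show $\psi'\in\J_k^n$ (resp.\ $\psi'\in\R_k^n$), because the induction hypothesis applied to $\vp\qn\psi'$ then finishes the argument.

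To deduce $\psi'\in\J_k^n$ from $\psi\in\J_k^n$, I would first treat $k=0$ separately: since no rule of $\qn$ can turn a formula containing a quantifier occurrence into a quantifier-free one (every redex and every contractum in Table \ref{table: PN} and Definition \ref{def: rules of qn} carries a quantifier), $\psi\in\F_0=\J_0^n$ forces $\psi'$ to be quantifier-free, hence $\psi'\in\J_0^n$. For $k=k'+1\geq 1$, I split according to whether $\chi$ is all of $\psi'$: if so then $\psi\equiv\chi'$ and Lemma \ref{lem: Enk+1 is closed under qn-rule} yields $\psi'=\chi\in\J_{k'+1}^n=\J_k^n$; if $\chi$ is a proper subformula occurrence of $\psi'$, then $\psi\equiv\psi'\br{\chi'/\chi}$ with $\chi\qn\chi'$, and I invoke Lemma \ref{lem: occurrence substitution on Enk and Unk} when $n\leq k'$ (equivalently $n<k$) and Lemma \ref{lem: classical occurrence substitution on Enk and Unk} when $n>k'$ (equivalently $n\geq k$), obtaining $\psi'\in\J_{k'+1}^n=\J_k^n$ in either case. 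The second clause is handled by the identical case analysis, using instead the $\R$-parts of Lemmas \ref{lem: Unk+1 is closed under qn-rule}, \ref{lem: occurrence substitution on Enk and Unk} and \ref{lem: classical occurrence substitution on Enk and Unk}.

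Essentially all of the real content has been absorbed into the earlier lemmas, so the work that remains is bookkeeping: confirming that after shifting the index by one, the two parameter regimes $n<k$ and $n\geq k$ are exactly the hypotheses of Lemmas \ref{lem: occurrence substitution on Enk and Unk} and \ref{lem: classical occurrence substitution on Enk and Unk}, and handling the degenerate base class $k=0$, where those lemmas do not apply. The one point I would be most careful about is the very first reduction, namely presenting $\vp\qn\psi$ as a chain of single rule steps and checking that each step genuinely has the shape $\psi'\equiv(\text{previous formula})\br{\chi'/\chi}$ with $\chi\qn\chi'$ and $\chi$ either proper or improper, so that the occurrence-substitution lemmas (which are stated for an arbitrary occurrence $\xi$) apply verbatim; the rest is a routine transfer of those lemmas to the reflexive and transitive closure $\qn$.
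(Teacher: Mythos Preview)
Your proof is correct and follows essentially the same approach as the paper. The paper's version is even terser: for $k>0$ it applies Lemmas~\ref{lem: classical occurrence substitution on Enk and Unk} and~\ref{lem: occurrence substitution on Enk and Unk} in a single shot with $\xi:=\vp$ and $\xi':=\psi$ (so that $\vp\br{\xi'/\xi}=\psi$), which makes your explicit induction on the length of $\vp\qn\psi$ and the separate appeal to Lemmas~\ref{lem: Enk+1 is closed under qn-rule} and~\ref{lem: Unk+1 is closed under qn-rule} redundant (though not incorrect), since those cases are already absorbed into the occurrence-substitution lemmas.
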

\begin{proof}
For $k=0$, our assertions hold since no consequence of the rules of $\qn$ is quantifier-free.
For $k>0$, our assertions follows from Lemmas \ref{lem: classical occurrence substitution on Enk and Unk} and \ref{lem: occurrence substitution on Enk and Unk}.
\end{proof}

\begin{thm}
\label{thm: characterizations of semi-classical PN}
The following hold$:$
    \begin{enumerate}
        \item 
        $\vp\in \J_k^n$ if and only if there exists $\psi\in \Sigma_k^+$ such that $\vp \qn \psi;$
        \item 
         $\vp\in \R_k^n$ if and only if there exists $\psi\in \Pi_k^+$ such that $\vp \qn \psi.$       
    \end{enumerate}
\end{thm}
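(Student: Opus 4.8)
The plan is to derive this theorem as an immediate corollary of the two preceding theorems together with the inclusions from Proposition~\ref{prop: Simga_k subset iE_k and Pi_k subset iU_k}. Since the statement is an ``if and only if'', I would treat the two directions separately, and handle both items (for $\J_k^n/\Sigma_k^+$ and for $\R_k^n/\Pi_k^+$) in parallel, as every ingredient used is symmetric between the two.

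For the forward direction (``only if''), I would simply invoke Theorem~\ref{thm: semi-classical PN}: if $\vp \in \J_k^n$ then there is $\psi \in \Sigma_k^+$ with $\vp \qn \psi$, and likewise if $\vp \in \R_k^n$ then there is $\psi \in \Pi_k^+$ with $\vp \qn \psi$. Nothing further is needed here.

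For the backward direction (``if''), suppose first that there exists $\psi \in \Sigma_k^+$ with $\vp \qn \psi$. By Proposition~\ref{prop: Simga_k subset iE_k and Pi_k subset iU_k}.\eqref{item: Sigmak+ is a subset of Ek0} we have $\Sigma_k^+ \subset \J_k^0$, and by iterating Proposition~\ref{prop: Simga_k subset iE_k and Pi_k subset iU_k}.\eqref{item: Ekn is a subset of Ekn+1} we get $\J_k^0 \subset \J_k^1 \subset \dots \subset \J_k^n$; hence $\psi \in \J_k^n$. Theorem~\ref{thm: the converse of semi-classical PN}.1 then yields $\vp \in \J_k^n$. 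The case of $\R_k^n$ is identical, using Proposition~\ref{prop: Simga_k subset iE_k and Pi_k subset iU_k}.\eqref{item: Pik+ is a subset of Uk0} and \eqref{item: Ukn is a subset of Ukn+1} to conclude $\Pi_k^+ \subset \R_k^n$, and then Theorem~\ref{thm: the converse of semi-classical PN}.2.

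I do not expect any genuine obstacle here: all the substantive work — the construction of prenex forms for formulas in $\J_k^n$ and $\R_k^n$, and the closure of these classes under taking $\qn$-preimages — has already been carried out in Theorems~\ref{thm: semi-classical PN} and~\ref{thm: the converse of semi-classical PN} and the supporting lemmas on the structural behaviour of $\J_k^n$ and $\R_k^n$. The only point worth stating explicitly is that $\Sigma_k^+ \subseteq \J_k^n$ and $\Pi_k^+ \subseteq \R_k^n$ for \emph{every} $n$, so that a prenex witness $\psi$ automatically lies in the relevant class; this is precisely what Proposition~\ref{prop: Simga_k subset iE_k and Pi_k subset iU_k} provides. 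Thus the proof is a short two-line assembly of earlier results.
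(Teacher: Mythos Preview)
Your proposal is correct and matches the paper's proof essentially verbatim: the paper derives the theorem from Theorems~\ref{thm: semi-classical PN} and~\ref{thm: the converse of semi-classical PN} together with the inclusions $\Sigma_k^+\subset \J_k^n$ and $\Pi_k^+ \subset \R_k^n$. Your only addition is spelling out that these inclusions follow from Proposition~\ref{prop: Simga_k subset iE_k and Pi_k subset iU_k}, which is exactly the intended justification.
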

\begin{proof}
Our theorem follows from Theorems \ref{thm: semi-classical PN} and \ref{thm: the converse of semi-classical PN}
since $\Sigma_k^+\subset \J_k^n$ and $\Pi_k^+ \subset \R_k^n$.
\end{proof}

\begin{remark}
\label{rem: reduction vs derivability}
    As demonstrated in Theorem \ref{thm: equivalents of Sn-LEM over HA}, our semi-classical prenex normalization of degree $n$ corresponds to $\LEM{\Sigma_n}$. 
    However, semi-classical prenex normalization is a reduction procedure without any reference to the notion of derivability, and hence, there is a gap between the semi-classical prenex normalization of degree $n$ and the derivability relation in the presence of $\LEM{\Sigma_n}$.
    In particular, a formula which is equivalent to some prenex formula of degree $k$ intuitionistically with assuming $\LEM{\Sigma_n}$, may not be  transformed to some formulas in the same class by $\qn$.
    For example, $\forall x \QF{\vp} \lor \exists y \QF{\psi} \to \QF{\chi}$ (with appropriate variable conditions) is equivalent to $\exists x \forall y (\QF{\vp} \lor \QF{\psi} \to \QF{\chi})$ over $\HA + \LEM{\Sigma_1}$ since the latter theory proves the equivalence between
$    \forall x \QF{\vp} \lor \exists y \QF{\psi} \to \QF{\chi} $
and
 $   \left(\forall x \QF{\vp} \to \QF{\chi}  \right) \land   \left(\exists y \QF{\psi} \to \QF{\chi}  \right) $.
    On the other hand, $\forall x \QF{\vp} \lor \exists y \QF{\psi} \to \QF{\chi} $ cannot be transformed to any $\Sigma_2^+$-formula
by ${\, \rightsquigarrow^{*}_{1}\, }$.
\end{remark}

In the end of the paper, we show that $\LEM{\Sigma_n}$ is necessary to show the prenex normal form theorems for $\J^n_k$ and $\R^n_k$ for all $k$ in the context of intuitionistic arithmetic despite the fact that those classes do not contain all formulas transformed into some formulas in $\Sigma_k^+$ and $\Pi_k^+$  over $\HA +\LEM{\Sigma_n}$ (see Remark \ref{rem: reduction vs derivability}).
We recall the following notation employed in \cite{FK21}.

\begin{notation}
Let $T $ be an extension of $\HA$.
Let $\Gamma$ and $\Gamma'$ be classes of $\HA$-formulas.
  Then  $\PNFT{\Gamma}{\Gamma'} $ denotes the following statement$:$
  for any $\varphi \in \Gamma$, there exists $\varphi' \in \Gamma'$ such that $\FV{\vp}=\FV{\vp'}$ and
    $T  \vdash \varphi \leftrightarrow \varphi' $.
\end{notation}

\begin{prop}
\label{prop: PNFT for semi-classical classes in HA}
     For a semi-classical theory $T$ containing $\HA +\LEM{\Sigma_{n}}$, then $\PNFT{\J^n_k}{\Sigma_k}$ and  $\PNFT{\R^n_k}{\Pi_k}$ hold for all $k$.
\end{prop}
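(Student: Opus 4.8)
The plan is to reduce $\PNFT{\J^n_k}{\Sigma_k}$ and $\PNFT{\R^n_k}{\Pi_k}$ to two ingredients: first, the purely syntactic transformation result of Theorem~\ref{thm: characterizations of semi-classical PN}, which tells us that any $\vp\in\J^n_k$ (resp.~$\R^n_k$) reduces via $\qn$ to some $\psi\in\Sigma^+_k$ (resp.~$\Pi^+_k$); and second, the fact that each individual rule of $\qn$ is provable as a biconditional over $\HA+\LEM{\Sigma_n}$, together with the standard collapse of cumulative prenex classes to non-cumulative ones over $\HA$. So the first step is to record that each of the prenex transformations used in $\qn$ --- namely the unconditional rules from Table~\ref{table: PN} that survive intuitionistically, plus $(\forall\to)_n$, $(\to\exists)_n$, $(\forall\lor)_n$, $(\lor\forall)_n$ --- yields an $\HA+\LEM{\Sigma_n}$-provable equivalence between the redex and the contractum. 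For the unconditional rules this is intuitionistic; for the four restricted rules, Theorem~\ref{thm: equivalents of Sn-LEM over HA} (items \eqref{item: forall -> rule restricted to Unn}, \eqref{item: ->exists rule restricted to Dnn}, \eqref{item: forall v rule restricted to Dnn}) gives exactly the missing implications over $\HA+\LEM{\Sigma_n}$, since the side formula $\delta$ (resp.~$\xi$) lies in $\C^+_n$ (resp.~$\U^+_n$) by the side conditions of Definition~\ref{def: rules of qn}, and $\LEM{\Sigma_n}$ is equivalent to $\LEM{\C^+_n}$ over $\HA$.

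The second step is to promote provable equivalence of redex/contractum to provable equivalence of $\vp$ and $\psi$ whenever $\vp\qn\psi$. This is a straightforward induction on the number of rule applications in $\vp\qn\psi$, using that $\qn$ applies rules to subformula occurrences and that $\HA$ (being closed under the usual congruence rules for the connectives and quantifiers) proves $\chi\leftrightarrow\chi'$ from $\eta\leftrightarrow\eta'$ when $\chi'$ arises from $\chi$ by replacing an occurrence of $\eta$ by $\eta'$. Since $\qn$ preserves free variables (the Remark after Definition~\ref{def: rules of qn}), the free-variable condition in $\PNFT{\cdot}{\cdot}$ is automatic. Combining with Theorem~\ref{thm: characterizations of semi-classical PN}, we conclude: for $\vp\in\J^n_k$ there is $\psi\in\Sigma^+_k$ with $T\vdash\vp\leftrightarrow\psi$ and $\FV{\vp}=\FV{\psi}$, and symmetrically for $\R^n_k$ and $\Pi^+_k$.

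The final step is to pass from $\Sigma^+_k$ to $\Sigma_k$ (and $\Pi^+_k$ to $\Pi_k$). Over $\HA$ one has $\PNFT{\Sigma^+_k}{\Sigma_k}$ and $\PNFT{\Pi^+_k}{\Pi_k}$: a formula in $\Sigma^+_k$ is either already in $\Sigma_k$ or lies in $\Sigma_i\cup\Pi_i$ for some $i<k$, and one can pad with dummy quantifiers $\exists\ol{z}$, $\forall\ol{z}$ on fresh variables to raise it into $\Sigma_k$-form, preserving provable equivalence and free variables intuitionistically (this is the same padding argument used classically, and it needs no logic beyond $\HA$). Chaining the two $\PNFT$ statements gives $\PNFT{\J^n_k}{\Sigma_k}$ and $\PNFT{\R^n_k}{\Pi_k}$ over any $T\supseteq\HA+\LEM{\Sigma_n}$, as claimed.

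The main obstacle, such as it is, lies in the first step: one must check carefully that the side conditions attached to the restricted rules in Definition~\ref{def: rules of qn} are precisely what Theorem~\ref{thm: equivalents of Sn-LEM over HA} requires --- in particular that $(\forall\to)_n$ has $\xi(x)\in\U^+_n$ so that $\exists x\neg\xi(x)\in\E^+_n\subset\C^+_n$ and the case analysis on $\exists x\neg\xi(x)$ goes through via $\LEM{\C^+_n}$ and $\DNE{\U^+_n}$, both consequences of $\LEM{\Sigma_n}$ over $\HA$. Everything else is routine: the congruence induction and the dummy-quantifier padding are standard and require no new ideas.
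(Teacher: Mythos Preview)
Your proof is correct and follows essentially the same approach as the paper's, which simply says ``Immediate from Theorems~\ref{thm: semi-classical PN} and~\ref{thm: equivalents of Sn-LEM over HA}.'' You have spelled out the steps that the paper leaves implicit: the congruence induction lifting redex/contractum equivalences to $\vp\leftrightarrow\psi$, and the dummy-quantifier padding from $\Sigma^+_k$ to $\Sigma_k$. Two minor remarks: you cite Theorem~\ref{thm: characterizations of semi-classical PN} where only its forward direction (Theorem~\ref{thm: semi-classical PN}) is needed; and Theorem~\ref{thm: equivalents of Sn-LEM over HA} is stated only for $n>0$, so for $n=0$ you should note separately that the restricted rules with quantifier-free side formulas are already derivable in $\HA$ (the paper glosses over this too).
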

\begin{proof}
    Immediate from Theorems \ref{thm: semi-classical PN} and \ref{thm: equivalents of Sn-LEM over HA}.
\end{proof}

\begin{lem}
\label{lem: PNFT(Unn+1, Pn+1)=>SnLEM}
Let $T $ be a theory in-between $\HA+\LEM{\Sigma_{n-1}} $ and $\PA$.
If $\PNFT{\R^{n}_{n+1}}{\Pi_{n+1}}$, then $T \vdash \LEM{\Sigma_{n}}$.
\end{lem}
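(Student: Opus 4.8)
The plan is to feed the instances of $\LEM{\Sigma_{n}}$ themselves into the hypothesis $\PNFT{\R^n_{n+1}}{\Pi_{n+1}}$ and then pull the resulting prenex formulas back down to $T$ by a conservation argument; note that the hypothesis tacitly assumes $n\geq 1$. First I would check that every instance of $\LEM{\Sigma_{n}}$ lies in $\R^{n}_{n+1}$. Fix $\vp\in\Sigma_{n}$ and set $A:\equiv\vp\lor\neg\vp$. Since $\Sigma_{n}\subseteq\Sigma_{n}^{+}\subseteq\J^{0}_{n}\subseteq\J^{n}_{n}=\E^{+}_{n}$ by Propositions~\ref{prop: Simga_k subset iE_k and Pi_k subset iU_k} and~\ref{prop: k>=n => Ek+=Enk and Uk+=Unk}, we have $\vp\in\D^{n}_{n}$, hence also $\vp\in\J^{n}_{n+1}$ by Remark~\ref{rem: Ekn and Ukn are subclasses of Ek+1n and Uk+1n}; moreover $\bot\in\D^{n}_{0}\subseteq\D^{n}_{n}\subseteq\R^{n}_{n+1}$. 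So, reading off the case $k=n$ of Definition~\ref{def: iEiU}, $\neg\vp\equiv(\vp\to\bot)$ has the form $E\to U$ with $E\in\J^{n}_{n+1}$ and $U\in\R^{n}_{n+1}$, hence $\neg\vp\in\R^{n}_{n+1}$, and then $A\equiv\vp\lor\neg\vp$ has the form $D\lor U$ with $D\in\D^{n}_{n}$ and $U\in\R^{n}_{n+1}$, hence $A\in\R^{n}_{n+1}$.

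Next, apply $\PNFT{\R^n_{n+1}}{\Pi_{n+1}}$ to obtain $A'\in\Pi_{n+1}$ with $\FV{A'}=\FV{A}$ and $T\vdash A\leftrightarrow A'$. Since $T\subseteq\PA$ we get $\PA\vdash A\leftrightarrow A'$, and since $A$ is an instance of the classical law of excluded middle, $\PA\vdash A$; therefore $\PA\vdash A'$. Generalising over the free variables $\ol{z}$ of $A'$ only enlarges the leading $\forall$-block (here $n\geq 1$, so $\Pi_{n+1}$ does begin with $\forall$), so the resulting sentence $\forall\ol{z}\,A'$ is still in $\Pi_{n+1}$ and is provable in $\PA$.

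Now I would invoke the standard conservation result that $\PA$ is $\Pi_{m+2}$-conservative over $\HA+\LEM{\Sigma_{m}}$; with $m:=n-1$ this gives $\HA+\LEM{\Sigma_{n-1}}\vdash\forall\ol{z}\,A'$, hence $T\vdash\forall\ol{z}\,A'$ because $\HA+\LEM{\Sigma_{n-1}}\subseteq T$, hence $T\vdash A'$ and therefore $T\vdash A\equiv\vp\lor\neg\vp$ using $T\vdash A\leftrightarrow A'$ once more. Since $\vp\in\Sigma_{n}$ was arbitrary, $T\vdash\LEM{\Sigma_{n}}$.

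The membership computation in the first step and the quantifier-counting in the second are routine; they are exactly where the case $k=n$ of Definition~\ref{def: iEiU} and the identity $\D^{n}_{n}=\C^{+}_{n}=\E^{+}_{n}\cup\U^{+}_{n}$ (Remark~\ref{rem: classes C+}) enter. The main obstacle is the last step: one needs that a $\Pi_{n+1}$ theorem of $\PA$ is already a theorem of $\HA+\LEM{\Sigma_{n-1}}$, which is precisely where both endpoints $\HA+\LEM{\Sigma_{n-1}}$ and $\PA$ of the interval assumed in the hypothesis are genuinely used. This $\Pi$-conservation fact is not established in the excerpt and would be supplied by a citation; for $n=1$ it is Friedman's theorem that $\PA$ is $\Pi_{2}$-conservative over $\HA$.
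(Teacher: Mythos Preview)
Your proof is correct and follows essentially the same route as the paper: show $\vp\lor\neg\vp\in\R^{n}_{n+1}$, apply $\PNFT{\R^{n}_{n+1}}{\Pi_{n+1}}$ to get a $T$-equivalent $\Pi_{n+1}$ formula, observe that $\PA$ proves it, and descend to $\HA+\LEM{\Sigma_{n-1}}\subseteq T$ via $\Pi_{n+1}$-conservation. The paper gives the membership step in one line (``$\vp\in\J^{n}_{n}$, hence $\vp\lor\neg\vp\in\R^{n}_{n+1}$ by the construction'') and cites the needed conservation result as \cite[Theorem 3.17]{FK23}, which is exactly the external input you anticipated; your explicit universal-closure step is a harmless extra that the paper omits.
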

\begin{proof}
    Let $\vp \in \Sigma_n$.
Then $\vp \in \J^n_n$, and hence, $\vp \lor \neg \vp \in \R^n_{n+1}$ by the construction of $\R^n_{n+1}$.
By  $\PNFT{\R^{n}_{n+1}}{\Pi_{n+1}}$, there exists $\psi \in \Pi_{n+1}$ such that $\FV{\psi} =\FV{ \vp \lor \neg \vp }$ and $T \vdash \psi \leftrightarrow (\vp \lor \neg \vp )$.
Since $\PA \vdash \vp \lor \neg \vp $, we have $\PA \vdash \psi$.
Then, by the conservation theorem for semi-classical arithmetic (cf. \cite[Theorem 3.17]{FK23}), we have $\HA + \LEM{\Sigma_{n-1}} \vdash \psi$, and hence, $T \vdash \vp \lor \neg \vp$.
\end{proof}

\begin{thm}
\label{thm: characterization for PNFT for semiclassical classes in HA}
    Let $T $ be a theory in-between $\HA $ and $\PA$.
    The following are pairwise equivalent$:$
\begin{enumerate}
    \item
    \label{item: PNFT for Enk and Unk for all k}
$\PNFT{\J^{n}_{k}}{\Sigma_{k}}$ and $\PNFT{\R^{n}_{k}}{\Pi_{k}}$ hold for all $k$;
    \item 
    \label{item: PNFT for Unk for all k <= n}
    $\PNFT{\R^{n}_{k}}{\Pi_{k}}$ holds for all $k\leq n+1$;
\item 
\label{item: T |- SnLEM}
$T \vdash \LEM{\Sigma_{n}}$.
\end{enumerate}
\end{thm}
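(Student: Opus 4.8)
The plan is to establish the cycle of implications $(\ref{item: PNFT for Enk and Unk for all k}) \Rightarrow (\ref{item: PNFT for Unk for all k <= n}) \Rightarrow (\ref{item: T |- SnLEM}) \Rightarrow (\ref{item: PNFT for Enk and Unk for all k})$. The implication $(\ref{item: PNFT for Enk and Unk for all k}) \Rightarrow (\ref{item: PNFT for Unk for all k <= n})$ is trivial, since the first statement asserts $\PNFT{\R^n_k}{\Pi_k}$ for \emph{all} $k$, in particular for all $k \leq n+1$. For $(\ref{item: T |- SnLEM}) \Rightarrow (\ref{item: PNFT for Enk and Unk for all k})$, I would invoke Proposition~\ref{prop: PNFT for semi-classical classes in HA}: if $T \vdash \LEM{\Sigma_n}$, then $T$ is a semi-classical theory containing $\HA + \LEM{\Sigma_n}$, so $\PNFT{\J^n_k}{\Sigma_k}$ and $\PNFT{\R^n_k}{\Pi_k}$ hold for all $k$ (noting also that $\Sigma_k \subseteq \Sigma_k^+$ and that any $\Sigma_k^+$-formula is $T$-equivalent to a $\Sigma_k$-formula, as already used implicitly; more directly, Theorem~\ref{thm: semi-classical PN} gives a $\qn$-reduct in $\Sigma_k^+$ and the prenex transformations are provable equivalences once $\LEM{\Sigma_n}$ is available, by Theorem~\ref{thm: equivalents of Sn-LEM over HA}).

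The substantive implication is $(\ref{item: PNFT for Unk for all k <= n}) \Rightarrow (\ref{item: T |- SnLEM})$. Here I would apply $\PNFT{\R^n_k}{\Pi_k}$ at the single index $k = n+1$, i.e.\ use $\PNFT{\R^n_{n+1}}{\Pi_{n+1}}$, together with Lemma~\ref{lem: PNFT(Unn+1, Pn+1)=>SnLEM}. That lemma requires the ambient theory to lie between $\HA + \LEM{\Sigma_{n-1}}$ and $\PA$, whereas here $T$ is only assumed to lie between $\HA$ and $\PA$. The fix is a case distinction on $n$. If $n = 0$, then $\LEM{\Sigma_0}$ is provable already in $\HA$ (decidability of quantifier-free formulas), so $(\ref{item: T |- SnLEM})$ holds trivially. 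If $n \geq 1$, then $\HA + \LEM{\Sigma_{n-1}} \subseteq \HA + \LEM{\Sigma_n}$, but we do not yet know $T$ contains $\LEM{\Sigma_{n-1}}$; to apply the lemma I would instead run the argument of Lemma~\ref{lem: PNFT(Unn+1, Pn+1)=>SnLEM} with $T$ replaced by $T' := T + \LEM{\Sigma_{n-1}}$ (which does lie between $\HA + \LEM{\Sigma_{n-1}}$ and $\PA$), concluding $T' \vdash \LEM{\Sigma_n}$; but since $\LEM{\Sigma_{n-1}}$ is a consequence of $\LEM{\Sigma_n}$, this gives $T + \LEM{\Sigma_{n-1}} \vdash \LEM{\Sigma_n}$, and one then needs to eliminate the extra hypothesis. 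Actually the cleanest route is induction on $n$: having $\PNFT{\R^n_k}{\Pi_k}$ for all $k \leq n+1$, first derive $T \vdash \LEM{\Sigma_{n-1}}$ from the instances with $k \leq n$ (noting $\R^{n}_k \supseteq \R^{n-1}_k$ by Proposition~\ref{prop: Simga_k subset iE_k and Pi_k subset iU_k}, but one must check the target class matches --- in fact for $k \leq n$ one has $\R^n_k = \U^+_k = \R^{n-1}_k$ when $k \leq n-1$, and handles $k=n$ separately), putting $T$ into the scope of Lemma~\ref{lem: PNFT(Unn+1, Pn+1)=>SnLEM}, and then apply that lemma directly.

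The main obstacle I anticipate is precisely this bookkeeping at the boundary of Lemma~\ref{lem: PNFT(Unn+1, Pn+1)=>SnLEM}: reconciling the weaker hypothesis ``$T$ between $\HA$ and $\PA$'' in the theorem with the stronger hypothesis ``$T$ between $\HA + \LEM{\Sigma_{n-1}}$ and $\PA$'' in the lemma. The induction on $n$ should resolve it: the base case $n = 0$ is immediate, and in the inductive step the instances of $\PNFT{\R^n_k}{\Pi_k}$ for $k \leq n$ feed (via the identification $\R^n_k = \U^+_k$ for $k \leq n$ from Proposition~\ref{prop: k>=n => Ek+=Enk and Uk+=Unk} and the construction of $\R^n_n$ giving $\vp \lor \neg\vp \in \R^n_n$ for $\vp \in \Sigma_{n-1}$) the hypothesis $T \vdash \LEM{\Sigma_{n-1}}$, after which Lemma~\ref{lem: PNFT(Unn+1, Pn+1)=>SnLEM} applies verbatim with the index $k = n+1$ to yield $T \vdash \LEM{\Sigma_n}$. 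One should double-check that the conservativity input (\cite[Theorem 3.17]{FK23}) invoked inside Lemma~\ref{lem: PNFT(Unn+1, Pn+1)=>SnLEM} is stated at the right level, but since it concerns conservativity of $\PA$ over $\HA + \LEM{\Sigma_{n-1}}$ for the relevant formula class, it is exactly what is needed.
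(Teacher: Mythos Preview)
Your overall strategy matches the paper's: the same cycle of implications, with $(\ref{item: T |- SnLEM}) \Rightarrow (\ref{item: PNFT for Enk and Unk for all k})$ via Proposition~\ref{prop: PNFT for semi-classical classes in HA}, $(\ref{item: PNFT for Enk and Unk for all k}) \Rightarrow (\ref{item: PNFT for Unk for all k <= n})$ trivial, and the substantive step $(\ref{item: PNFT for Unk for all k <= n}) \Rightarrow (\ref{item: T |- SnLEM})$ handled by first securing $T \vdash \LEM{\Sigma_{n-1}}$ and then invoking Lemma~\ref{lem: PNFT(Unn+1, Pn+1)=>SnLEM} at $k = n+1$. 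You correctly identified the only real obstacle, namely bridging the gap between the hypothesis ``$T$ between $\HA$ and $\PA$'' and the hypothesis ``$T$ between $\HA + \LEM{\Sigma_{n-1}}$ and $\PA$'' required by Lemma~\ref{lem: PNFT(Unn+1, Pn+1)=>SnLEM}.

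The difference is in how $T \vdash \LEM{\Sigma_{n-1}}$ is obtained. The paper does not run an induction on $n$: it simply observes that $\R^n_k = \U^+_k$ for all $k \leq n$ by Proposition~\ref{prop: k>=n => Ek+=Enk and Uk+=Unk}, so the assumed instances $\PNFT{\R^n_k}{\Pi_k}$ for $k \leq n$ are literally $\PNFT{\U^+_k}{\Pi_k}$ for $k \leq n$, and then quotes \cite[Lemma~7.2]{FK21} as a black box to conclude $T \vdash \LEM{\Sigma_{n-1}}$. Your induction on $n$ (reducing to the theorem at level $n-1$ via $\R^{n-1}_k \subset \R^n_k$ from Proposition~\ref{prop: Simga_k subset iE_k and Pi_k subset iU_k}, so that $\PNFT{\R^n_k}{\Pi_k}$ implies $\PNFT{\R^{n-1}_k}{\Pi_k}$ for $k \leq n$) is a correct self-contained alternative that avoids the external citation; it effectively re-proves the content of \cite[Lemma~7.2]{FK21} inside the present framework. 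Either route lands at the same place, after which Lemma~\ref{lem: PNFT(Unn+1, Pn+1)=>SnLEM} finishes the job. Your intermediate suggestion of passing to $T' := T + \LEM{\Sigma_{n-1}}$ is a dead end (you cannot eliminate the added hypothesis afterwards), so it is good that you abandoned it in favor of the induction.
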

\begin{proof}
Implication $(\ref{item: T |- SnLEM} \Rightarrow \ref{item: PNFT for Enk and Unk for all k})$ is by Proposition \ref{prop: PNFT for semi-classical classes in HA}.
Implication $(\ref{item: PNFT for Enk and Unk for all k} \Rightarrow \ref{item: PNFT for Unk for all k <= n})$ is obvious.
In the following we show implication $(\ref{item: PNFT for Unk for all k <= n} \Rightarrow \ref{item: T |- SnLEM})$.
Assume that  $\PNFT{\R^{n}_{k}}{\Pi_{k}}$ holds for all $k\leq n+1$.
By Proposition \ref{prop: k>=n => Ek+=Enk and Uk+=Unk} and \cite[Lemma 7.2]{FK21}, we have $T \vdash \LEM{\Sigma_{n-1}}$.
Therefore, by Lemma \ref{lem: PNFT(Unn+1, Pn+1)=>SnLEM} and our assumption, we have $T \vdash \LEM{\Sigma_n}$.
\end{proof}

\begin{remark}
    We don't know whether it is also equivalent to $T\vdash \LEM{\Sigma_n}$ that $\PNFT{\J^{n}_{k}}{\Sigma_{k}}$ holds for all $k\leq n+1$ in Theorem \ref{thm: characterization for PNFT for semiclassical classes in HA}.
\end{remark}

\section*{Acknowledgements}
The first author was supported by JSPS KAKENHI Grant Numbers JP20K14354 and JP23K03205, and the second author by JP23K03200.
This work was also supported by the Research Institute for Mathematical
Sciences, an International Joint Usage/Research Center located in Kyoto
University.

\bibliographystyle{plain}
\bibliography{bibliography}

\end{document}